\newtheorem{theorem}[subsection]{Theorem}
\newtheorem{proposition}[subsection]{Proposition}
\newtheorem{corollary}[subsection]{Corollary}
\theoremstyle{definition}
\newtheorem{definition}[subsection]{Definition}
\newtheorem{example}[subsection]{Example}
\newtheorem{remark}[subsection]{Remark}
\numberwithin{equation}{section}
\newcommand{\K}{\mathbb K}
\newcommand{\g}{\mathfrak{g}}
\newcommand{\sll}{\mathfrak{sl}_2(\mathbb{K})}
\newcommand{\cat}[1]{{\EuScript #1}}
\newcommand{\cO}{\cat{O}}
\newcommand{\cF}{\cat{F}}
\newcommand{\bD}{\mathbf{D}}
\newcommand{\bE}{\mathbf{E}}
\newcommand{\bbK}{\mathbb{K}}
\newcommand{\bbZ}{\mathbb{Z}}
\newcommand{\HomMod}{\mathbf{HomMod}}
\newcommand{\Homdidend}{\mathbf{HomDend}}
\newcommand{\Homtridend}{\mathbf{HomTridend}}
\newcommand{\HomRB}{\mathbf{HomRB}}
\newcommand{\HomRBA}{\mathbf{HomRBA}}
\DeclareMathOperator{\Id}{Id}
\newcommand{\nicearrow}{\SelectTips{cm}{10}}
\newcommand{\bracket}{[\ ,\ ]}
\newcommand{\alphaass}{\mathfrak{as}_{A}}
\newcommand{\assal}{\mathfrak{as}_{A_l}}
\newcommand{\assprime}{\mathfrak{as}_{A'}}
\newcommand{\aoneass}{\mathfrak{as}_{A_1}}
\newcommand{\atwoass}{\mathfrak{as}_{A_2}}
\newcommand{\anmass}{\mathfrak{as}_{A(n,m)}}
\newcommand{\betaass}{\mathfrak{as}_{A_\beta}}
\newcommand{\Sthree}{\mathcal{S}_3}
\newcommand{\andspace}{\quad\text{and}\quad}
\newcommand{\Rtilde}{\widetilde{R}}
\newcommand{\tone}{{
\begin{picture}(2,10)(0,3)
\drawline(1,0)(1,10)
\put(1,10){\circle*{2}}
\end{picture}
}}
\newcommand{\ttwo}{{
\begin{picture}(16,14)(0,5)
\drawline(8,0)(8,6)(0,14)
\drawline(8,6)(16,14)
\put(0,14){\circle*{2}}
\put(16,14){\circle*{2}}
\put(8,6){\circle*{2}}
\end{picture}
}}
\newcommand{\tthreeone}{{
\begin{picture}(16,14)(0,5)
\drawline(8,0)(8,6)(0,14)
\drawline(8,6)(16,14)
\drawline(12,10)(8,14)
\put(0,14){\circle*{2}}
\put(16,14){\circle*{2}}
\put(8,14){\circle*{2}}
\put(8,6){\circle*{2}}
\put(12,10){\circle*{2}}
\end{picture}
}}
\newcommand{\tthreetwo}{{
\begin{picture}(16,14)(0,5)
\drawline(8,0)(8,6)(0,14)
\drawline(4,10)(8,14)
\drawline(8,6)(16,14)
\put(0,14){\circle*{2}}
\put(16,14){\circle*{2}}
\put(8,14){\circle*{2}}
\put(8,6){\circle*{2}}
\put(4,10){\circle*{2}}
\end{picture}
}}
\newcommand{\tfourone}{{
\begin{picture}(18,15)
\drawline(9,-4)(9,2)(0,11)
\drawline(9,2)(18,11)
\drawline(12,5)(6,11)
\drawline(15,8)(12,11)
\put(0,11){\circle*{2}}
\put(18,11){\circle*{2}}
\put(6,11){\circle*{2}}
\put(12,11){\circle*{2}}
\put(9,2){\circle*{2}}
\put(12,5){\circle*{2}}
\put(15,8){\circle*{2}}
\end{picture}
}}
\newcommand{\tfourtwo}{{
\begin{picture}(18,15)
\drawline(9,-4)(9,2)(0,11)
\drawline(9,2)(18,11)
\drawline(12,5)(6,11)
\drawline(9,8)(12,11)
\put(0,11){\circle*{2}}
\put(18,11){\circle*{2}}
\put(6,11){\circle*{2}}
\put(12,11){\circle*{2}}
\put(9,2){\circle*{2}}
\put(12,5){\circle*{2}}
\put(9,8){\circle*{2}}
\end{picture}
}}
\newcommand{\tfourthree}{{
\begin{picture}(18,15)
\drawline(9,-4)(9,2)(0,11)
\drawline(3,8)(6,11)
\drawline(9,2)(18,11)
\drawline(15,8)(12,11)
\put(0,11){\circle*{2}}
\put(18,11){\circle*{2}}
\put(6,11){\circle*{2}}
\put(12,11){\circle*{2}}
\put(9,2){\circle*{2}}
\put(3,8){\circle*{2}}
\put(15,8){\circle*{2}}
\end{picture}
}}
\newcommand{\tfourfour}{{
\begin{picture}(18,15)
\drawline(9,-4)(9,2)(0,11)
\drawline(6,5)(12,11)
\drawline(9,8)(6,11)
\drawline(9,2)(18,11)
\put(0,11){\circle*{2}}
\put(18,11){\circle*{2}}
\put(6,11){\circle*{2}}
\put(12,11){\circle*{2}}
\put(9,2){\circle*{2}}
\put(6,5){\circle*{2}}
\put(9,8){\circle*{2}}
\end{picture}
}}
\newcommand{\tfourfive}{{
\begin{picture}(18,15)
\drawline(9,-4)(9,2)(0,11)
\drawline(3,8)(6,11)
\drawline(6,5)(12,11)
\drawline(9,2)(18,11)
\put(0,11){\circle*{2}}
\put(18,11){\circle*{2}}
\put(6,11){\circle*{2}}
\put(12,11){\circle*{2}}
\put(9,2){\circle*{2}}
\put(6,5){\circle*{2}}
\put(3,8){\circle*{2}}
\end{picture}
}}
\newcommand{\psiwedgephi}{{
\begin{picture}(20,18)(-2,5)
\drawline(8,0)(8,6)(0,14)
\drawline(8,6)(16,14)
\put(8,6){\circle*{2}}
\put(-1,18){\makebox(0,0){$\psi$}}
\put(19,18){\makebox(0,0){$\varphi$}}
\end{picture}}}
\newcommand{\decthreetree}{{\setlength{\unitlength}{.14cm}
\begin{picture}(18,15)(-2,3)
\drawline(8,3)(8,6)(0,14)
\drawline(4,10)(8,14)
\drawline(8,6)(16,14)
\put(0,14){\circle*{.7}}
\put(16,14){\circle*{.7}}
\put(8,14){\circle*{.7}}
\put(8,6){\circle*{.7}}
\put(4,10){\circle*{.7}}
\put(-1,16){\makebox(0,0){\small$(3,5,2)$}}
\put(8,16){\makebox(0,0){\small$(0)$}}
\put(16,16){\makebox(0,0){\small$(2,6)$}}
\put(-1.5,9){\makebox(0,0){\small$(7,4,9,2)$}}
\put(12.5,5.5){\makebox(0,0){\small$(1,8,0)$}}
\end{picture}}}
\begin{document}

\title[ Rota-Baxter Hom-Lie-admissible algebras ]
{  Rota-Baxter Hom-Lie-admissible algebras }
\author{Abdenacer MAKHLOUF}
\author{  Donald YAU }
\address{Abdenacer Makhlouf, Universit\'{e} de Haute Alsace,  Laboratoire de Math\'{e}matiques, Informatique et Applications,
4, rue des Fr\`{e}res Lumi\`{e}re F-68093 Mulhouse, France}%
\email{Abdenacer.Makhlouf@uha.fr}

\address{Donald Yau, Department of Mathematics, The Ohio State University at Newark, 1179 University Drive, Newark, OH 43055, USA}
\email{dyau@math.ohio-state.edu}
\thanks {
}

 \subjclass[2000]{16W20,17D25}
\keywords{Hom-Lie algebra, Hom-associative algebra, Hom-Lie-admissible algebra, Rota-Baxter operator, Rota-Baxter algebra, Hom-preLie algebra, Hom-dendriform algebra, Hom-tridendriform algebra, free algebra}
%
\begin{abstract}
We study Hom-type analogs of Rota-Baxter and dendriform algebras, called Rota-Baxter $G$-Hom-associative algebras and Hom-dendriform algebras.  Several construction results are proved.  Free algebras for these objects are explicitly constructed.  Various functors between these categories, as well as an adjunction between the categories of Rota-Baxter Hom-associative algebras and of Hom-(tri)dendriform algebras, are constructed.
\end{abstract}
\maketitle

\section{Introduction}

Rota-Baxter operators have appeared in a wide range of areas in pure and applied mathematics. The paradigmatic example of a Rota-Baxter operator concerns the integration by parts formula. The algebraic formulation of a Rota-Baxter algebra  first appeared in G. Baxter's work in probability and the study of fluctuation theory \cite{Baxter}. This algebra was intensively studied by G.C. Rota \cite{Rota,Rota2} in connection with combinatorics. In the work of A. Connes and D. Kreimer \cite{ConnesKreimer} about their Hopf algebra approach to renormalization  of quantum field theory, the Rota-Baxter identity appeared  under the name \emph{multiplicativity constraint}. This seminal work gives rise to an important development including Rota-Baxter algebras and their connections to other algebraic structure.  See \cite{aguiar1,Aguiar3,KEF1,KEF-Guo1,KEF-Guo_2007, KEF-Guo-Kreimer,KEF-Guo-Manchon2006,KEF-Manchon_JA2009,KEF-Manchon2009,KEF-Bondia-Patras,KEF-Machon-Patras,Guo1,Guo2,Guo3,GuoK,Guo4,Li-Hou-Bai07,BaiBellierGuo}.
Rota-Baxter operator  in the context of Lie algebras were introduced
independently by Belavin and Drinfeld \cite{Drinfeld} and Semenov-Tian-Shansky \cite{semenov}.  Rota-Baxter Lie algebras are related to solutions of the (modified) classical Yang-Baxter equation.

Closely related to Rota-Baxter algebras are dendriform algebras, which were introduced by Loday in \cite{Loday1}. Dendriform algebras have two binary operations, which dichotomize an associative multiplication.  The motivation to introduce these algebraic structures comes from $K$-theory.  Dendriform algebras are connected to several areas in mathematics and physics, including  Hopf algebras, homotopy Gerstenhaber algebra, operads, homology, combinatorics, and quantum field theory, where they  occur  in the theory of renormalization of Connes and Kreimer.  Rota-Baxter algebras are related to dendriform algebras via a pair of adjoint functors \cite{KEF1,KEF-Guo2}.  Roughly speaking, Rota-Baxter algebras are to dendriform algebras as associative algebras are to Lie algebras.

The study of nonassociative algebras was originally motivated by certain problems in physics and other branches of mathematics. Hom-algebra structures first arose in quasi-deformations of Lie algebras of vector fields.
Discrete modifications of vector fields via twisted derivations lead to Hom-Lie and quasi-Hom-Lie structures, in which the Jacobi condition is twisted. The first examples of $q$-deformations, in which the derivations are replaced by $\sigma$-derivations, concerned the Witt and the Virasoro algebras; see for example \cite{AizawaSaito,ChaiElinPop,ChaiKuLukPopPresn,ChaiIsKuLuk,ChaiPopPres,CurtrZachos1,
DaskaloyannisGendefVir,Kassel1, LiuKeQin,Hu}. A  general study and construction of Hom-Lie algebras are considered in
\cite{HLS,LS1,LS2}.  A more general framework bordering color and super Lie algebras was introduced in \cite{HLS,LS1,LS2,LS3}. In the subclass of Hom-Lie
algebras, skew-symmetry is untwisted, whereas the Jacobi identity is twisted by a single linear map and contains three terms as in Lie algebras, reducing to ordinary Lie algebras when the twisting linear map is the identity map.

Hom-associative algebras, which generalize associative
algebras to a situation where associativity is twisted by a
linear map, was introduced  in \cite{MS}.  The
commutator bracket defined using the multiplication
in a Hom-associative algebra leads naturally to a Hom-Lie algebra.  This provides a different  way of constructing Hom-Lie algebras. Also introduced in \cite{MS} are Hom-Lie-admissible algebras and more general $G$-Hom-associative
algebras, which are twisted generalizations of Lie-admissible and $G$-associative algebras \cite{GozeRemm}, respectively.  The class of $G$-Hom-associative algebras includes the subclasses of Hom-Vinberg and Hom-preLie algebras,
generalizing to the twisted situation Vinberg and preLie algebras, respectively.  It was shown in \cite{MS} that for these classes of algebras the operation of taking
commutator leads to Hom-Lie algebras as well.

Enveloping
algebras of Hom-Lie algebras were discussed in \cite{Yau:EnvLieAlg,Yau:comodule}.
The fundamentals of the formal deformation theory and associated
cohomology structures for Hom-Lie algebras have been considered
initially  in \cite{HomDeform} and completed in \cite{AEM}. Homology for Hom-Lie algebras was developed in \cite{Yau:homology}.  In \cite{HomHopf} and \cite{HomAlgHomCoalg}, the theory of
Hom-coalgebras and related structures are developed. Further development could be found in, for example,  \cite{Mak-ElHamd:defoHom-alter,Mak:HomAlterna2010,Mak:Almeria,AmmarMakhloufJA2010,AM2008,Mak:HomAlterna2010,Canepl2009,JinLi},\cite{Yau:YangBaxter}-\cite{Yau:MalsevAlternJordan}, and the references therein.


The purpose of this paper is  to study a common generalization of Rota-Baxter algebras and Hom-Lie-admissible algebras, called Rota-Baxter Hom-Lie-admissible algebras, and the closely related Hom-dendriform algebras.   We explore their free algebras and the connections between their categories.  As we will show in this paper, Rota-Baxter operators provide new ways of going between the various subclasses of Hom-Lie-admissible algebras.


The rest of this paper is organized as follows.  We summarize in the next section the basics of Hom-Lie-admissible algebras.  In Section \ref{sect3}, we introduce Rota-Baxter $G$-Hom-associative algebras and provide several construction results.  In Section \ref{sec:rbtoprelie}, it is shown that some Rota-Baxter Hom-Lie-admissible and Hom-associative algebras yield left Hom-preLie algebras.  Free Rota-Baxter $G$-Hom-associative algebras are discussed in Section \ref{sec:HRB1}. The construction of the free Rota-Baxter $G$-Hom-associative algebra involves the combinatorial objects of decorated trees and is formally similar to the construction of the enveloping Hom-associative algebras of Hom-Lie algebras \cite{Yau:homology,Yau:comodule}.  Free Rota-Baxter algebras have been studied in \cite{am,KEF-Guo1}.

In Section \ref{sect4}, we discuss Hom-dendriform algebras, their associated Hom-associative and Hom-preLie algebras, and free Hom-dendriform algebras. The last section is dedicated to establishing functors between the categories of Rota-Baxter Hom-associative algebras and of Hom-(tri)dendriform algebras.

\section{ Hom-Lie admissible and  $G$-Hom-associative algebras } \label{sect1}

In this Section  we recall the main result of Hom-Lie-admissible algebras in \cite{MS} and summarize the definitions and some properties of $G$-Hom-associative algebras.  The latter are twisted generalizations of the  $G$-associative algebras introduced in \cite{GozeRemm}.

\subsection{Convention}
Throughout this article we work over a fixed field $\mathbb{K}$ of characteristic $0$.

\subsection{Hom-algebras}

A \emph{Hom-module} is a pair $(A,\alpha )$ consisting of a $\K$-module  $A$ and  a linear self-map $\alpha \colon A \rightarrow A$, called the \emph{twisting map}.  A \emph{morphism} of Hom-modules $f \colon (A,\alpha) \to (A',\alpha')$ is a linear map $f \colon A \to A'$ such that $f \circ \alpha = \alpha' \circ f$.  The category of Hom-modules is denoted by $\HomMod$.

A \emph{Hom-algebra} is  a triple $(A,\mu,\alpha )$ consisting of a Hom-module  $(A,\alpha )$ and a  bilinear map $\mu \colon A\times A \rightarrow A$.  Such a Hom-algebra is often denoted simply by $A$.  A Hom-algebra $(A, \mu, \alpha)$  is said to be
\emph{multiplicative} if for all  $ x,y\in A$ we have $\alpha(\mu(x,y))=\mu(\alpha (x),\alpha (y))$.

Let $\left( A,\mu,\alpha \right) $ and
$A^{\prime }=\left( A^{\prime },\mu ^{\prime
},\alpha^{\prime }\right) $ be two Hom-algebras.  A \emph{morphism} $f \colon A\rightarrow A^{\prime }$ of Hom-algebras is a linear map such that
$
\mu ^{\prime }\circ (f\otimes f)=f\circ \mu \ $
and $ \  f\circ \alpha=\alpha^{\prime }\circ f.
$

In particular, two Hom-algebras $\left( A,\mu,\alpha \right) $ and
$\left( A,\mu ^{\prime },\alpha^{\prime }\right) $ are \emph{isomorphic} if
there exists a
linear isomorphism $f\ $such that
$
\mu =f^{-1}\circ \mu ^{\prime }\circ (f\otimes f)$
and $\alpha= f^{-1}\circ \alpha^{\prime }\circ
f.
$


\subsection{Hom-Lie algebras}
The  notion of a Hom-Lie algebra  was introduced by Hartwig, Larsson and
Silvestrov in \cite{HLS,LS1,LS2}, motivated initially by examples of deformed
Lie algebras coming from twisted discretizations of vector fields.
In this article, we follow the notations and a slightly more general definition
of a Hom-Lie algebra from \cite{MS}.

\begin{definition}\label{def:HomLie}
A \emph{Hom-Lie algebra} is a Hom-algebra $(\g, [\ ,
\ ], \alpha)$ that satisfies
\begin{eqnarray} & [x_1,x_2]=-[x_2,x_1],
\quad {\text{(skew-symmetry)}} \\ \label{HomJacobiCondition} &
\circlearrowleft_{x_1,x_2,x_3}{[\alpha(x_1),[x_2,x_3]]}=0 \quad
{\text{(Hom-Jacobi identity)}}
\end{eqnarray}
for all $x_1, x_2, x_3$ in $\g$, where $\circlearrowleft_{x_1, x_2, x_3}$ denotes the summation over the cyclic permutations of $x_1, x_2, x_3$.
\end{definition}

When the twisting map $\alpha$ is the identity map, we recover classical Lie algebras.  The Hom-Jacobi identity \eqref{HomJacobiCondition} is the Jacobi identity in this case.

The classical concept of Lie-admissible algebras (see for example  \cite{GozeRemm}) is extended to the Hom-setting in \cite{MS} as follows:

\begin{definition}
A \emph{Hom-Lie-admissible algebra} is a Hom-algebra $(A,\mu,\alpha )$ in which the commutator bracket, defined for all $x_1,x_2 \in A$ by
$$
[ x_1,x_2 ]=\mu (x_1,x_2)-\mu (x_2,x_1 ),
$$
satisfies the Hom-Jacobi identity \eqref{HomJacobiCondition}.
\end{definition}

\begin{remark}
\label{commutatorhomlie}
The commutator bracket is automatically skew-symmetric.  Thus, if it satisfies the Hom-Jacobi identity, then it defines a Hom-Lie algebra $(A, \bracket, \alpha)$.  When $(A,\mu,\alpha)$ is a Hom-Lie-admissible algebra, the Hom-Lie algebra $(A, \bracket, \alpha)$ is also called the \emph{commutator Hom-Lie algebra}.
\end{remark}

Let $(A,\mu,\alpha )$ be a Hom-algebra.  The \emph{Hom-associator} of $A$ is the trilinear map
$\alphaass$ on $A$ defined  for $x_1,x_2,x_3\in A$ by
\begin{equation}\label{Hom-associator}
\alphaass(x_1,x_2,x_3)=\mu (\mu (x_1,x_2),\alpha
(x_{3}))-\mu(\alpha(x_{1}),\mu (x_{2},x_3)).
\end{equation}
When the twisting map $\alpha$ is the identity map, the Hom-associator reduces to the usual associator.

\begin{definition}
Let $G$ be a subgroup of the permutation group $\mathcal{S}_3$.  A \emph{$G$-Hom-associative algebra} is a  Hom-algebra $(A,\mu,\alpha )$ that satisfies the identity
\begin{equation}\label{admi}
\sum_{\sigma\in G} (-1)^{\varepsilon ({\sigma})} \alphaass\left(x_{\sigma
(1)}, x_{\sigma (2)}, x_{\sigma (3)}\right) = 0,
\end{equation}
where the $x_i$ are in $A$, and $(-1)^{\varepsilon ({\sigma})}$ is the
signature of the permutation $\sigma$.
\end{definition}

The condition (\ref{admi}) is called the \emph{$G$-Hom-associative identity}.  It is equivalent to
\[
\sum_{\sigma\in G}{(-1)^{\varepsilon ({\sigma})} \alphaass \circ
\sigma}=0,
\]
where
$\sigma(x_1, x_2, x_3) = (x_{\sigma(1)}, x_{\sigma(2)}, x_{\sigma(3)})$.  When $\alpha = \Id$, \eqref{admi} is called the \emph{$G$-associative identity} \cite{GozeRemm}.

\begin{remark}
Suppose $(A,\mu,\alpha)$ is  a Hom-algebra, and
$
[ x,y ]=\mu (x,y)-\mu (y,x )
$ denotes the commutator bracket.  Then the Hom-algebra $(A, \bracket, \alpha)$  satisfies  the Hom-Jacobi identity if and only if the $\Sthree$-Hom-associative identity \eqref{admi}
\[
\sum_{\sigma\in \Sthree} (-1)^{\varepsilon ({\sigma})} \alphaass\left(x_{\sigma
(1)}, x_{\sigma (2)}, x_{\sigma (3)}\right) = 0
\]
holds for all $x_i \in A$.  In other words, Hom-Lie-admissible algebras are all $\Sthree$-Hom-associative algebras.  The following result gives the converse of this observation.
\end{remark}

\begin{proposition}[\cite{MS}]\label{ThmAdmi}
Let $G$ be a subgroup of the permutation group $\mathcal{S}_3$.  Then every $G$-Hom-associative algebra is a Hom-Lie-admissible algebra.
\end{proposition}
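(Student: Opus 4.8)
The plan is to reduce the statement to the equivalence noted in the preceding remark: a Hom-algebra is Hom-Lie-admissible precisely when it satisfies the $\Sthree$-Hom-associative identity \eqref{admi} with $G = \Sthree$. Thus it suffices to show that, for an arbitrary subgroup $G$ of $\Sthree$, the $G$-Hom-associative identity forces the full $\Sthree$-Hom-associative identity. The mechanism is the coset decomposition of $\Sthree$ together with the fact that \eqref{admi} is a \emph{universal} identity, holding for all triples of arguments.

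The key step is to show that the partial sum over each left coset of $G$ vanishes separately. Fix $\tau \in \Sthree$. Since the $G$-Hom-associative identity holds for all inputs, I would substitute $x_i \mapsto x_{\tau(i)}$ into
$$\sum_{\sigma \in G} (-1)^{\varepsilon(\sigma)} \alphaass\bigl(x_{\sigma(1)}, x_{\sigma(2)}, x_{\sigma(3)}\bigr) = 0,$$
which replaces each argument $x_{\sigma(j)}$ by $x_{(\tau\sigma)(j)}$. Multiplying through by the constant $(-1)^{\varepsilon(\tau)}$ and using that the signature is a group homomorphism, so that $(-1)^{\varepsilon(\tau)}(-1)^{\varepsilon(\sigma)} = (-1)^{\varepsilon(\tau\sigma)}$, the sum becomes
$$\sum_{\rho \in \tau G} (-1)^{\varepsilon(\rho)} \alphaass\bigl(x_{\rho(1)}, x_{\rho(2)}, x_{\rho(3)}\bigr) = 0,$$
where $\rho = \tau\sigma$ runs over the left coset $\tau G$.

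Finally, writing $\Sthree$ as the disjoint union of its distinct left cosets $\tau_1 G, \dots, \tau_k G$ and adding the corresponding coset sums, each of which vanishes by the previous step, yields
$$\sum_{\rho \in \Sthree} (-1)^{\varepsilon(\rho)} \alphaass\bigl(x_{\rho(1)}, x_{\rho(2)}, x_{\rho(3)}\bigr) = 0.$$
This is exactly the $\Sthree$-Hom-associative identity, so by the remark the commutator bracket satisfies the Hom-Jacobi identity and $(A,\mu,\alpha)$ is Hom-Lie-admissible. I expect the only delicate point to be the bookkeeping of signs under the variable substitution, specifically confirming that premultiplying by $(-1)^{\varepsilon(\tau)}$ converts the $G$-sum into the coset sum with the correct signature $(-1)^{\varepsilon(\rho)}$ on each term; everything else is formal, and the argument is insensitive to the left/right coset convention since $\Sthree$ is partitioned by either.
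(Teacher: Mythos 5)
Your proof is correct: the substitution $x_i \mapsto x_{\tau(i)}$ combined with the multiplicativity of the signature does convert the $G$-sum into the left-coset sum over $\tau G$, and summing over coset representatives yields the $\Sthree$-identity, which the preceding remark identifies with Hom-Lie-admissibility. The paper itself gives no proof (it cites \cite{MS}), but your coset-decomposition argument is precisely the standard one used there (and in Goze--Remm for the untwisted case), so there is nothing to add.
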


The subgroups of $\Sthree$ are
\[
\begin{split}
G_1&=\{\Id\}, ~G_2=\{\Id,\sigma_{1 2}\},~G_3=\{\Id,\sigma_{2
3}\},\\
G_4&=\{\Id,\sigma_{1 3}\},~G_5=A_3 ,~G_6=\mathcal{S}_3,
\end{split}
\]
where $A_3$ is the alternating group, and $\sigma_{ij}$ is the transposition between $i$ and $j$.  In view of Proposition \ref{ThmAdmi}, it is natural to introduce the following subclasses of Hom-Lie-admissible algebras corresponding to the subgroups of $\mathcal{S}_3.$
\begin{itemize}
\item
The  $G_1$-Hom-associative algebras are also called \emph{Hom-associative
algebras} and satisfy the $G_1$-Hom-associative identity
\begin{equation}\label{HomAssCond}
\mu(\alpha(x_1),\mu (x_2, x_3))= \mu(\mu(x_1,x_2), \alpha ( x_3)),
\end{equation}
or equivalently
\[
\alphaass = 0.
\]
When the twisting map $\alpha$ is the identity map, we recover an associative algebra.

\item
The  $G_2$-Hom-associative algebras are also called \emph{left Hom-preLie  algebras}, \emph{Hom-Vinberg algebras}, and \emph{left Hom-symmetric algebras}.  They satisfy the $G_2$-Hom-associative identity
\begin{equation}\label{HomPreLieCond}
\mu(\alpha(x_1),\mu (x_2, x_3))-\mu(\alpha(x_2),\mu (x_1, x_3))=\mu (\mu
(x_1,x_2),\alpha ( x_3))-\mu (\mu (x_2,x_1),\alpha ( x_3)),
\end{equation}
which is equivalent to saying that the Hom-associator is symmetric in the first two variables.  When the twisting map $\alpha$ is the identity map, we recover a left preLie algebra, also known as a Vinberg algebra and a left symmetric algebra.

\item
The  $G_3$-Hom-associative algebras are also called \emph{right Hom-preLie  algebras} and \emph{right Hom-symmetric algebras}.  They satisfy the $G_3$-Hom-associative identity
\[
\mu(\alpha(x_1),\mu (x_2, x_3))-\mu(\alpha(x_1),\mu (x_3,x_2))=\mu (\mu
(x_1,x_2),\alpha ( x_3))-\mu (\mu (x_1, x_3),\alpha ( x_2)),
\]
which is equivalent to saying that the Hom-associator is symmetric in the last two variables.  When the twisting map $\alpha$ is the identity map, we recover a right pre-Lie algebra, also known as a right symmetric algebra.

\item
The  $G_4$-Hom-associative algebras satisfy the $G_4$-Hom-associative identity
\begin{equation}\nonumber
\mu(\alpha(x_1),\mu (x_2, x_3)) - \mu(\alpha(x_3),\mu (x_2,x_1))
=
\mu (\mu
(x_1,x_2),\alpha ( x_3))
- \mu (\mu ( x_3,x_2),\alpha (x_1)).
\end{equation}

\item
The  $G_5$-Hom-associative algebras satisfy the $A_3$-Hom-associative identity
\begin{eqnarray}\nonumber
\nonumber \mu(\alpha(x_1),\mu (x_2, x_3))+\mu(\alpha(x_2),\mu
( x_3,x_1))+\mu(\alpha( x_3),\mu
(x_1,x_2))= \\
\nonumber \mu (\mu (x_1,x_2),\alpha ( x_3))+\mu (\mu (x_2, x_3),\alpha (x_1))+\mu
(\mu ( x_3,x_1),\alpha (x_2)).
\end{eqnarray}\nonumber
If the product $\mu$ is skew-symmetric, then the previous condition is
exactly the Hom-Jacobi identity.

\item
The  $G_6$-Hom-associative algebras are exactly the Hom-Lie-admissible
algebras.
\end{itemize}

\begin{remark}
A Hom-associative algebra is also a $G$-Hom-associative algebra for every subgroup $G$ of $\Sthree$.  A left Hom-preLie algebra is the opposite algebra of a right Hom-preLie algebra.
\end{remark}


\section{Rota-Baxter operators and $G$-Hom-associative algebras } \label{sect3}
In this section, we extend the notion of Rota-Baxter algebra to $G$-Hom-associative  algebras and prove some construction results.

\begin{definition}
A \emph{Rota-Baxter  $G$-Hom-associative algebra (of weight $\lambda$)} is a $G$-Hom-associative algebra $( A, \cdot, \alpha) $ together with a linear self-map $R\colon A\rightarrow A$ that satisfies the identity
\begin{equation}\label{eq:RB}
R(x)\cdot R(y)=R(R(x)\cdot y+x\cdot R(y)+\lambda x\cdot y),
\end{equation}
where $\lambda\in\K$ is a fixed scalar called the \emph{weight}.  The map $R$ is called a \emph{Rota-Baxter operator}, and the identity \eqref{eq:RB} is called the \emph{Rota-Baxter identity}.  A \emph{morphism} of Rota-Baxter $G$-Hom-associative algebras is a morphism of Hom-algebras that commutes with the Rota-Baxter operators.  With $G$ understood, the category of Rota-Baxter $G$-Hom-associative algebras (of weight $\lambda$) is denoted by $\HomRB_\lambda$. A Rota-Baxter  $G$-Hom-associative algebra is \emph{multiplicative} if the Hom-algebra $(A, \cdot, \alpha)$ is multiplicative and $\alpha R = R\alpha$.
\end{definition}

When the weight $\lambda$ is understood, we denote a Rota-Baxter  $G$-Hom-associative algebra by a quadruple $( A, \cdot, \alpha,R). $   We obtain \emph{Rota-Baxter $G$-associative algebras} when the twisting map $\alpha$ is the identity map, and we denote them by triples $( A, \cdot,R)$.  A Rota-Baxter $G_1$-Hom-associative (resp., $G_6$-Hom-associative) algebra will also be called a \emph{Rota-Baxter Hom-associative} (resp., \emph{Hom-Lie-admissible}) \emph{algebra}.

\begin{remark}
A Rota-Baxter $\{\Id\}$-associative algebra is exactly what is usually called a Rota-Baxter algebra.  A Rota-Baxter $A_3$-associative algebra whose multiplication is skew-symmetric is exactly a Rota-Baxter Lie algebra.
\end{remark}

\begin{remark}
A Rota-Baxter Hom-Lie algebra is a Rota-Baxter $A_3$-Hom-associative algebra in which the product $\mu$ is skew-symmetric.  Rota-Baxter Hom-Lie algebras are closely related to Hom-Novikov algebras; see \cite[Theorem 1.4]{Yau:novikov}.
\end{remark}


\begin{example}
Let $( A, \cdot, \alpha,R)$ be a  Rota-Baxter  $G$-Hom-associative algebra of weight $\lambda$. Then
\[
( A, \cdot, \alpha,- R -\lambda \Id)
\]
is a  Rota-Baxter $G$-Hom-associative  algebra of weight $\lambda$.
\end{example}

\begin{example}
Let $( A, \cdot, \alpha,R)$ be a  Rota-Baxter  $G$-Hom-associative algebra of weight $\lambda$. Then
$
( A, \cdot, \alpha, -R )
$
is a  Rota-Baxter $G$-Hom-associative  algebra of weight $-\lambda$.
\end{example}

\begin{example}
Suppose $\lambda \in \K$ is a non-zero scalar.   Then $( A, \cdot, \alpha,R)$ is a  Rota-Baxter  $G$-Hom-associative algebra of weight $\lambda$ if and only if
\[
(A, \cdot, \alpha, \lambda^{-1} R)
\]
is a Rota-Baxter  $G$-Hom-associative algebra of weight $1$.
\end{example}

\begin{example}[Jackson $\mathfrak{sl}_2$]
The Jackson $\mathfrak{sl}_2$, denoted  $q\mathfrak{sl}_2$, is a $q$-deformation of the classical  $\mathfrak{sl}_2$. This family of Hom-Lie algebras was constructed in \cite{LS3}
using  a quasi-deformation scheme based on discretizing by means of
Jackson $q$-derivations a representation of $\sll$ by
one-dimensional vector fields (first order ordinary differential
operators) and using the twisted commutator bracket defined in
\cite{HLS}. It carries a  Hom-Lie algebra structure but not a Lie algebra structure for $q\neq 1$. It is defined with respect to a basis $\{x_1,x_2,x_3\}$  by the brackets and a linear  map $\alpha$ such that:
$$
\begin{array}{cc}
\begin{array}{ccc}
 [ x_1, x_2 ] &= &-2q x_2, \\ {}
 [x_1, x_3 ]&=& 2 x_3, \\ {}
 [ x_2,x_3 ] & = & - \frac{1}{2}(1+q)x_1,
 \end{array}
 & \quad
  \begin{array}{ccc}
  \alpha (x_1)&=&q x_1, \\
 \alpha (x_2)&=&q^2  x_2, \\
   \alpha (x_3)&=&q x_3,
  \end{array}
\end{array}
$$
where $q$ is a parameter in $\K$. If $q=1$ we recover the classical $\mathfrak{sl}_2$.

Let $R_1$ and $R_2$ be the following two operators defined on $q\mathfrak{sl}_2$ with respect to the basis $\{x_1,x_2,x_3\}$ by:
$$
\begin{array}{cc}
\begin{array}{ccc}
 R_1(x_1)&=&  0,\\ {}
R_1( x_2 )&=&\rho_1 x_2 +q  \frac{\rho_1^2}{\rho_2}  x_3, \\ {}
R_1(x_3 ) & =&  \rho_2 x_2 +q  \rho_1  x_3,
 \end{array}
 & \quad
  \begin{array}{ccc}
   R_2(x_1)&=&  \rho_1 x_1 - \frac{(1+q) \rho_1^2}{\rho_2}  x_3,\\ {}
R_2( x_2 )&=&\frac{1}{2}\rho_1 x_2 +\frac{(1+q)^2 \rho_1^3}{8\rho_2^2}  x_3, \\ {}
R_3(x_3 ) & =& \rho_2 x_1 +\frac{ 2q \rho_2^2}{(1+q)^2\rho_1}  x_2-\frac{(2+q) \rho_1}{2}  x_3,
  \end{array}
\end{array}
$$
where $\rho_1,\rho_2$ are  parameters in $\K^*$ and $q\neq -1$.  Then $(q\mathfrak{sl}_2,[\ , \ ],\alpha, R_1)$ and $(q\mathfrak{sl}_2,[\ , \ ],\alpha, R_2)$ are Rota-Baxter Hom-Lie algebras of weight $0$.
\end{example}

\begin{example}\label{example1ass}
Let $\{x_1,x_2,x_3\}$  be a basis of a $3$-dimensional vector space
$A$ over $\K$. The following multiplication $\cdot$ and linear map
$\alpha$ on $A$ define Hom-associative algebras over $\K^3${\rm :}
$$
\begin{array}{ll}
\begin{array}{lll}
  x_1\cdot x_1&=& a\ x_1, \ \\
 x_1\cdot  x_2&=& x_2 \cdot  x_1=a\ x_2,\\
x_1 \cdot  x_3 &=& x_3\cdot  x_1=b\ x_3,\\
 \end{array}
 & \quad
 \begin{array}{lll}
x_2\cdot  x_2 &=& a\ x_2, \ \\
 x_2\cdot   x_3&=& b\ x_3, \ \\
 x_3\cdot  x_2&=&  x_3\cdot x_3=0,
  \end{array}
\end{array}
$$

$$  \alpha (x_1)= a\ x_1, \quad
 \alpha (x_2) =a\ x_2 , \quad
   \alpha (x_3)=b\ x_3,
$$
where $a,b$ are parameters in $\K$. The algebras are not associative
when $a\neq b$ and $b\neq 0$, since
$$ (x_1\cdot  x_1)\cdot x_3-  x_1\cdot
(x_1\cdot x_3)=(a-b)b x_3.$$

Let $R$  be the  operator defined with respect to the basis $\{x_1,x_2,x_3\}$ by
$$
 R(x_1)=  \rho_1 x_3,\
R( x_2 )=\rho_2 x_3, \
R(x_3 )  =0,
$$
where $\rho_1,\rho_2$ are  parameters in $\K$.  Then $(A,\cdot,\alpha, R)$ is a Rota-Baxter Hom-associative algebra of weight $0$.

\end{example}

In the rest of this Section, we apply the twisting principles, introduced in \cite[Theorem 2.4]{Yau:homology} and \cite[Theorem 2.11]{Yau:MalsevAlternJordan},  to obtain examples of Rota-Baxter $G$-Hom-associative algebras.  Also, we extend to Rota-Baxter $G$-associative algebras the  construction involving elements of the centroid used in \cite[Proposition 1.12]{BenayadiMakhlouf}.

The following result states that a Rota-Baxter  $G$-Hom-associative algebra yields another Rota-Baxter $G$-Hom-associative algebra when its multiplication and twisting map are twisted by a morphism.

\begin{theorem}
\label{thm:firsttwist}
Let $(A,\mu, \alpha, R)$ be a Rota-Baxter $G$-Hom-associative algebra of weight $\lambda$ and  $\beta \colon A\rightarrow A$ be a morphism.  Then
\[
A_\beta = (A, \mu_\beta = \beta\mu, \beta\alpha, R)
\]
is also a Rota-Baxter $G$-Hom-associative algebra of weight $\lambda$.  Moreover, if $A$ is multiplicative, then so is $A_\beta$.

Furthermore, suppose $(A',\mu', \alpha', R')$ is a Rota-Baxter  $G$-Hom-associative
algebra   and $\beta ' \colon A'\to A'$ is a morphism.  If $f\colon A\rightarrow A'$ is a morphism such that $f\beta = \beta'f$, then
$$f\colon A_\beta \to A'_{\beta'}
$$
is also a morphism.
\end{theorem}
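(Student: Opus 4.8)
The plan is to verify the three claims in turn: that $A_\beta$ is a $G$-Hom-associative algebra, that $R$ remains a Rota-Baxter operator of weight $\lambda$ on $A_\beta$, and that $A_\beta$ is multiplicative when $A$ is. The first claim is exactly the content of the twisting principle cited from \cite[Theorem 2.4]{Yau:homology}: twisting the multiplication $\mu$ by a morphism $\beta$ and replacing $\alpha$ by $\beta\alpha$ preserves the defining $G$-Hom-associative identity. Concretely, one computes the new Hom-associator $\mathfrak{as}_{A_\beta}$ in terms of the old one and shows it equals $\beta^2$ applied to $\mathfrak{as}_A$ (composed with $\beta$ where appropriate), using the morphism property $\beta\mu = \mu(\beta\otimes\beta)$ and $\beta\alpha = \alpha\beta$; the identity \eqref{admi} then transfers because $\beta$ is linear and commutes through the cyclic sum. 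I would cite this rather than re-derive it.

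The substantive new point is the second claim. First I would observe that, because $\beta$ is a morphism of $A$ it commutes with $\alpha$ and satisfies $\beta\mu=\mu(\beta\otimes\beta)$, but crucially the Rota-Baxter operator $R$ is unchanged. I must verify the Rota-Baxter identity \eqref{eq:RB} for the triple $(A,\mu_\beta,R)$, namely that
\[
R(x)\cdot_\beta R(y)=R\bigl(R(x)\cdot_\beta y+x\cdot_\beta R(y)+\lambda\, x\cdot_\beta y\bigr),
\]
where $\cdot_\beta=\beta\mu$. Writing out the left side gives $\beta\mu(R(x),R(y))=\beta\bigl(R(x)\cdot R(y)\bigr)$, and expanding each product on the right produces $R\beta\bigl(R(x)\cdot y + x\cdot R(y)+\lambda x\cdot y\bigr)$. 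The key mechanism is that $\beta$ slides to the outside of every product, so after applying the original Rota-Baxter identity of $A$ the two sides reduce to comparing $\beta R(z)$ with $R\beta(z)$ for the relevant element. Thus the computation closes provided $\beta R=R\beta$.

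The main obstacle is therefore precisely this commutation: the theorem as stated does not list $\beta R = R\beta$ among the hypotheses, yet the Rota-Baxter identity on $A_\beta$ seems to require it. I expect the resolution to be that $\beta$, being a \emph{morphism of the Rota-Baxter $G$-Hom-associative algebra} $A$ (not merely of the underlying Hom-algebra), automatically commutes with $R$ by the definition of morphism in this category, so that $\beta R=R\beta$ is available for free; I would state this explicitly at the start. The third claim, multiplicativity, is immediate: if $\alpha\mu=\mu(\alpha\otimes\alpha)$ and $\alpha R=R\alpha$, then for the new twisting map $\beta\alpha$ one checks $(\beta\alpha)\mu_\beta=\mu_\beta\bigl((\beta\alpha)\otimes(\beta\alpha)\bigr)$ and $(\beta\alpha)R=R(\beta\alpha)$ using that $\beta$, $\alpha$, and $R$ pairwise commute. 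Finally, for the functoriality statement, given $f$ with $f\beta=\beta'f$, the two conditions defining a morphism $A_\beta\to A'_{\beta'}$, namely $f\mu_\beta=\mu'_{\beta'}(f\otimes f)$ and $f(\beta\alpha)=(\beta'\alpha')f$, follow by pre- and post-composing the original morphism relations $f\mu=\mu'(f\otimes f)$, $f\alpha=\alpha'f$, $fR=R'f$ with the intertwining hypothesis $f\beta=\beta'f$; this is a routine diagram chase requiring no new idea.
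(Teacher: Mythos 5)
Your proposal is correct and follows essentially the same route as the paper: the paper also derives $G$-Hom-associativity of $A_\beta$ from the relation $\mathfrak{as}_{A_\beta}=\beta^2\circ\mathfrak{as}_A$, and verifies the Rota-Baxter identity by sliding $\beta$ outside the products and invoking $\beta R=R\beta$ (presented there as a commutative diagram rather than an element computation). Your resolution of the one delicate point is exactly the paper's: ``morphism'' here means a morphism of Rota-Baxter $G$-Hom-associative algebras, which by definition commutes with $R$, so $\beta R=R\beta$ is indeed available for free.
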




\begin{proof}
To see that $A_\beta$ is a $G$-Hom-associative algebra, note that the Hom-associators of $A$ and $A_\beta$ are related as
\[
\betaass = \beta^2 \circ \alphaass.
\]
Thus, the $G$-Hom-associative identity in $A$ implies
\[
\sum_{\sigma\in G} (-1)^{\varepsilon ({\sigma})} \betaass \circ \sigma
= \beta^2 \circ \left(\sum_{\sigma\in G} (-1)^{\varepsilon ({\sigma})} \alphaass \circ
\sigma\right) = 0.
\]
This shows that $A_\beta$ is a $G$-Hom-associative algebra.

The Rota-Baxter identity \eqref{eq:RB} for $A_\beta$ is true by the following commutative diagram:
\begin{equation}
\label{muonerb}
\nicearrow
\xymatrix{
A^{\otimes 2} \ar[r]^-{\chi} \ar[dd]_-{R^{\otimes 2}} & A^{\otimes 2} \ar[r]^-{\mu} & A \ar[r]^-{\beta} \ar[dd]_-{R} & A \ar[dd]^-{R}\\
& & & \\
A^{\otimes 2} \ar[rr]^-{\mu} & & A \ar[r]^-{\beta} & A.
}
\end{equation}
Here
\[
\chi = R \otimes \Id + \Id \otimes R + \lambda (\Id^{\otimes 2}),
\]
so the left square is commutative by the Rota-Baxter identity in $A$.  The right rectangle is commutative because $\beta$ is a morphism.  We have shown that $A_\beta$ is a Rota-Baxter $G$-Hom-associative algebra.

The assertions concerning multiplicativity and the morphism $f$ are obvious.
\end{proof}

Let us discuss two special cases of Theorem \ref{thm:firsttwist}.  The following result says that each multiplicative Rota-Baxter $G$-Hom-associative algebra yields a sequence of multiplicative Rota-Baxter $G$-Hom-associative algebras with twisted multiplications and twisting maps.

\begin{corollary}
Let $(A,\mu, \alpha, R)$ be a multiplicative Rota-Baxter $G$-Hom-associative algebra of weight $\lambda$.  Then
\[
A_{\alpha^n} = \left(A, \alpha^n\mu, \alpha^{n+1}, R\right)
\]
is also a multiplicative Rota-Baxter $G$-Hom-associative algebra of weight $\lambda$ for each $n \geq 1$.
\end{corollary}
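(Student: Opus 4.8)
The plan is to obtain the corollary as a direct application of Theorem \ref{thm:firsttwist} with the twisting morphism taken to be $\beta = \alpha^n$. The only real work is to verify that $\alpha^n$ qualifies as a morphism of the Rota-Baxter $G$-Hom-associative algebra $A$ in the sense demanded by that theorem; once this is established, the conclusion, including the weight $\lambda$ and the multiplicativity, follows verbatim from the theorem applied to $A_\beta = A_{\alpha^n} = (A, \alpha^n\mu, \alpha^n\alpha, R) = (A, \alpha^n\mu, \alpha^{n+1}, R)$.

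First I would record that the multiplicativity of $A$ is precisely the assertion that $\alpha$ is a morphism of the underlying Hom-algebra $(A,\mu,\alpha)$: the identity $\alpha\mu = \mu(\alpha\otimes\alpha)$ is exactly the multiplicativity condition, and $\alpha$ trivially commutes with itself. Next, since $A$ is by hypothesis a \emph{multiplicative} Rota-Baxter $G$-Hom-associative algebra, we have $\alpha R = R\alpha$ by definition, so $\alpha$ commutes with the Rota-Baxter operator as well. Hence $\alpha$ is an endomorphism of $A$ in $\HomRB_\lambda$.

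Then I would pass to powers. A composite of morphisms is again a morphism, so $\alpha^n$ is a morphism for every $n \geq 1$. Concretely, $\alpha^n\mu = \mu(\alpha^n\otimes\alpha^n)$ follows by induction from $\alpha\mu = \mu(\alpha\otimes\alpha)$, the map $\alpha^n$ commutes with $\alpha$ since powers of a single map commute, and $\alpha^n R = R\alpha^n$ follows by iterating $\alpha R = R\alpha$.

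Finally, I would apply Theorem \ref{thm:firsttwist} with $\beta = \alpha^n$: its statement yields that $A_{\alpha^n}$ is a Rota-Baxter $G$-Hom-associative algebra of the same weight $\lambda$, and its ``Moreover'' clause gives that $A_{\alpha^n}$ is multiplicative because $A$ is. I do not anticipate a genuine obstacle; the one point meriting care is confirming that the notion of ``morphism'' invoked in Theorem \ref{thm:firsttwist} is the one internal to $\HomRB_\lambda$ (so that commutation with $R$ is built into the hypothesis), which is exactly what the multiplicative Rota-Baxter condition $\alpha R = R\alpha$ supplies for $\beta = \alpha^n$.
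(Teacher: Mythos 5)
Your proposal is correct and is exactly the paper's argument: the paper proves this corollary in one line as the $\beta = \alpha^n$ special case of Theorem \ref{thm:firsttwist}. The verification you supply --- that multiplicativity of $A$ (namely $\alpha\mu = \mu(\alpha\otimes\alpha)$ and $\alpha R = R\alpha$) makes $\alpha$, hence $\alpha^n$, a morphism in $\HomRB_\lambda$ --- is precisely the detail the paper leaves implicit.
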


\begin{proof}
This is the $\beta = \alpha^n$ special case of Theorem \ref{thm:firsttwist}.
\end{proof}

The following result says that a Rota-Baxter $G$-associative algebra deforms into a multiplicative Rota-Baxter $G$-Hom-associative algebra via a morphism.

\begin{corollary}
\label{cor1:twist}
Let $(A,\mu,R)$ be a Rota-Baxter $G$-associative algebra of weight $\lambda$ and $\beta \colon A \to A$ be a morphism.  Then
\[
A_\beta = (A, \mu_\beta = \beta\mu, \beta, R)
\]
is a multiplicative Rota-Baxter $G$-Hom-associative algebra of weight $\lambda$.
\end{corollary}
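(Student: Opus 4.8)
The plan is to obtain Corollary~\ref{cor1:twist} as the $\alpha = \Id$ special case of Theorem~\ref{thm:firsttwist}, so that the only work is to match the hypotheses and to specialize the conclusion. First I would view the Rota-Baxter $G$-associative algebra $(A,\mu,R)$ as the Rota-Baxter $G$-Hom-associative algebra $(A,\mu,\Id,R)$ of the same weight $\lambda$; this is legitimate because, by definition, a Rota-Baxter $G$-associative algebra is exactly a Rota-Baxter $G$-Hom-associative algebra whose twisting map is the identity. Under this identification the given morphism $\beta$ is a morphism of Rota-Baxter $G$-Hom-associative algebras: it preserves $\mu$, it commutes with $R$, and it commutes with $\Id$ vacuously. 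Hence Theorem~\ref{thm:firsttwist} applies directly.

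Applying the theorem produces
\[
A_\beta = (A, \mu_\beta = \beta\mu, \beta\alpha, R),
\]
and since $\alpha = \Id$ the new twisting map is $\beta\alpha = \beta$, matching the statement of the corollary. Theorem~\ref{thm:firsttwist} already delivers that $A_\beta$ is a Rota-Baxter $G$-Hom-associative algebra of weight $\lambda$, so it remains only to secure multiplicativity. For this I would invoke the ``moreover'' clause of Theorem~\ref{thm:firsttwist}, which guarantees that $A_\beta$ is multiplicative as soon as $A$ is. The key observation is that $(A,\mu,\Id,R)$ is automatically multiplicative: when $\alpha = \Id$ the conditions $\alpha\mu(x,y) = \mu(\alpha(x),\alpha(y))$ and $\alpha R = R\alpha$ read $\mu(x,y) = \mu(x,y)$ and $R = R$, both of which hold trivially. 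This finishes the deduction.

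Because the argument is a specialization, there is no substantial obstacle; the one point deserving care is to confirm that ``morphism'' in the hypothesis supplies everything Theorem~\ref{thm:firsttwist} consumes, in particular that $\beta$ commutes with $R$, which is what makes the right-hand rectangle of diagram~\eqref{muonerb} commute. If one prefers a self-contained verification instead of citing the ``moreover'' clause, I would check multiplicativity of $A_\beta$ by hand: using $\mu(\beta x,\beta y) = \beta\mu(x,y)$ one has $\beta\mu_\beta(x,y) = \beta^2\mu(x,y) = \mu_\beta(\beta x,\beta y)$, together with $\beta R = R\beta$, and then re-run the short diagram chase behind \eqref{muonerb} to recover the Rota-Baxter identity for $\mu_\beta$.
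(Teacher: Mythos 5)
Your proposal is correct and coincides with the paper's own proof, which simply observes that the corollary is the $\alpha=\Id$ special case of Theorem~\ref{thm:firsttwist}; your extra verifications (that $(A,\mu,\Id,R)$ is automatically multiplicative and that ``morphism'' supplies compatibility with $R$) are accurate elaborations of that one-line argument.
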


\begin{proof}
This is the $\alpha = \Id$ special case of Theorem \ref{thm:firsttwist}.
\end{proof}

As a converse to Corollary \ref{cor1:twist}, given a  multiplicative Rota-Baxter $G$-Hom-associative algebra $\left( A,\mu ,\alpha,R \right) $, one may ask whether it is induced by an ordinary Rota-Baxter $G$-associative  algebra $(A,\widetilde{\mu},R)$.  In other words, we ask whether there exists a morphism $\alpha$ on $(A,\widetilde{\mu},R)$
such that
$\mu=\alpha\circ\widetilde{\mu}$.

Let $(A,\mu ,\alpha)$ be a multiplicative  $G$-Hom-associative algebra. Following the observation in \cite{Gohr}, when   $\alpha$ is invertible, the twisting principle  with  $\alpha^{-1}$ leads to a $G$-associative algebra. If $\alpha$
is an algebra morphism with respect to $\widetilde{\mu}$,
then $\alpha$ is also an algebra morphism
with respect to $\mu=\alpha\circ\widetilde{\mu}$ because
$$\mu(\alpha(x),\alpha(y))=\alpha\circ\widetilde{\mu}(\alpha(x),\alpha(y))=
\alpha\circ\alpha\circ\widetilde{\mu}(x,y)=\alpha\circ\mu(x,y).$$
\noindent
If $\alpha$ is bijective, then $\alpha^{-1}$ is
also an algebra automorphism. Therefore, one may use an untwist
operation on the $G$-Hom-associative algebra to recover the
$G$-associative algebra ($\widetilde{\mu}=\alpha^{-1}\circ\mu$).

The following result is a partial converse to Corollary \ref{cor1:twist}.

\begin{proposition}
Let $(A,\mu ,\alpha,R)$ be a multiplicative  Rota-Baxter  $G$-Hom-associative  algebra of weight $\lambda$ in which $\alpha$ is invertible  and commutes with $R$.  Then
\[
A' = (A,\mu_{\alpha^{-1}}=\alpha^{-1}\circ \mu ,R)
\]
is a Rota-Baxter $G$-associative  algebra of weight $\lambda$.
\end{proposition}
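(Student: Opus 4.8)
The plan is to obtain $A'$ as the special case $\beta = \alpha^{-1}$ of the twisting construction in Theorem \ref{thm:firsttwist}, so that the structural work (preservation of the $G$-Hom-associative identity and of the Rota-Baxter identity) is already done, and only two checks remain: that $\alpha^{-1}$ is a legitimate twisting morphism of $(A,\mu,\alpha,R)$, and that the twisting map of $A_{\alpha^{-1}}$ collapses to $\Id$.

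First I would verify that $\alpha^{-1}$ is a morphism of the Rota-Baxter Hom-algebra $A$. Multiplicativity of $A$ gives $\alpha\circ\mu = \mu\circ(\alpha\otimes\alpha)$; substituting $\alpha^{-1}x$ and $\alpha^{-1}y$ for the arguments and then applying $\alpha^{-1}$ yields $\alpha^{-1}\circ\mu = \mu\circ(\alpha^{-1}\otimes\alpha^{-1})$, so $\alpha^{-1}$ is multiplicative, and it trivially commutes with $\alpha$. The remaining condition is that $\alpha^{-1}$ commute with the Rota-Baxter operator; this follows from the hypothesis $\alpha R = R\alpha$ by multiplying on both sides by $\alpha^{-1}$, giving $\alpha^{-1}R = R\alpha^{-1}$. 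Thus $\alpha^{-1}$ satisfies all the conditions required of the twisting morphism in Theorem \ref{thm:firsttwist}.

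Applying Theorem \ref{thm:firsttwist} with $\beta = \alpha^{-1}$ then produces the Rota-Baxter $G$-Hom-associative algebra
\[
A_{\alpha^{-1}} = \left(A,\ \alpha^{-1}\circ\mu,\ \alpha^{-1}\circ\alpha,\ R\right)
\]
of weight $\lambda$. Its twisting map is $\alpha^{-1}\circ\alpha = \Id$ and its multiplication is $\mu_{\alpha^{-1}} = \alpha^{-1}\circ\mu$, so $A_{\alpha^{-1}}$ is precisely $A'$. Finally, a Rota-Baxter $G$-Hom-associative algebra whose twisting map is the identity is by definition a Rota-Baxter $G$-associative algebra: the Hom-associator $\alphaass$ reduces to the ordinary associator when $\alpha = \Id$, so the $G$-Hom-associative identity becomes the $G$-associative identity. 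This gives the desired conclusion.

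I do not expect a serious obstacle here: the only point demanding care is the multiplicativity of $\alpha^{-1}$, and this is just the standard fact that the inverse of a bijective algebra morphism is again a morphism, already anticipated in the discussion preceding the statement. The invertibility of $\alpha$ is exactly what makes the Hom-twisting reversible, while the commutation $\alpha R = R\alpha$ is exactly what keeps the Rota-Baxter operator compatible after untwisting.
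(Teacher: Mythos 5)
Your proposal is correct, and it is a legitimate alternative packaging of the argument. The paper's own proof does not invoke Theorem \ref{thm:firsttwist}; it verifies the conclusion directly, recording the relation $\assprime = \alpha^{-2}\circ\alphaass$ between the associator of $A'$ and the Hom-associator of $A$ (whence the $G$-associative identity follows by applying $\alpha^{-2}$ to the $G$-Hom-associative identity), and observing that $\alpha^{-1}$ commutes with $R$ so that $R$ remains a Rota--Baxter operator for $\mu_{\alpha^{-1}}$. Your route instead checks that $\alpha^{-1}$ is a morphism of the Rota--Baxter Hom-algebra $A$ --- compatibility with $\mu$ from multiplicativity, with $R$ from the hypothesis $\alpha R=R\alpha$ --- and then specializes Theorem \ref{thm:firsttwist} to $\beta=\alpha^{-1}$, noting that the resulting twisting map $\alpha^{-1}\alpha=\Id$ turns the Hom-structure into an ordinary one. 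This is exactly the ``untwisting'' the paper sketches in the discussion preceding the proposition, and the two arguments rest on the same computations: the identity $\betaass=\beta^{2}\circ\alphaass$ at $\beta=\alpha^{-1}$ is the paper's associator relation, and the right rectangle of \eqref{muonerb} is the commutation of $R$ with $\alpha^{-1}$. One point worth making explicit when citing Theorem \ref{thm:firsttwist}: the ``morphism'' $\beta$ there must commute with $R$ (that is what makes the right rectangle of \eqref{muonerb} commute), so your verification that $\alpha^{-1}R=R\alpha^{-1}$ is not an optional remark but precisely the hypothesis that licenses the application; you handle this correctly. What your packaging buys is the elimination of repeated computation; what the paper's direct proof buys is self-containment, at the cost of redoing the twisting calculation in the special case $\beta=\alpha^{-1}$.
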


\begin{proof}
The associator of $A'$ and the Hom-associator of $A$ are related as
\[
\assprime = \alpha^{-2} \circ \alphaass.
\]
The $G$-Hom-associative identity in $A$ now implies
\[
\sum_{\sigma\in G} (-1)^{\varepsilon ({\sigma})} \assprime \circ \sigma
= \alpha^{-2} \left(\sum_{\sigma\in G} (-1)^{\varepsilon ({\sigma})} \alphaass \circ \sigma\right) = 0.
\]
Therefore, the $G$-associative identity holds in $A'$.

Since  $\alpha$ commutes with $R$, so does $\alpha^{-1}$. Hence $R$ is a Rota-Baxter operator for the multiplication $\mu_{\alpha^{-1}}$.
\end{proof}

Next we construct  Rota-Baxter $G$-Hom-associative algebras involving elements of the centroid of    Rota-Baxter $G$-associative algebras. The construction of Hom-algebras using elements of the centroid was initiated in \cite{BenayadiMakhlouf} for Lie algebras.

Let   $(A, \mu, \beta)$ be a Hom-algebra.  The \emph{centroid} $Cent(A)$ of $A$ is defined as the set consisting of linear self-maps $\alpha \colon A \to A$ satisfying the condition
\[
\alpha(\mu(x, y)) = \mu(\alpha(x), y) = \mu(x, \alpha (y))
\]
for all $x,y\in A$.  Notice that  if $\alpha\in Cent (A )$, then we have
\[
\mu(\alpha^p(x),\alpha^q(y))=\alpha^{p+q}\mu(x,y)
\]
for all $p,q \geq 0$.

\begin{theorem}
Let $(A, \mu,R)$ be a Rota-Baxter $G$-associative algebra of weight $\lambda$.  Suppose $\alpha\in Cent (A)$ such that $\alpha$ and $R$ commute.  Then
\[
A(n,m) = (A,\mu_{\alpha^n} = \alpha^n\mu, \alpha^m, R)
\]
is a Rota-Baxter $G$-Hom-associative algebra of weight $\lambda$ for any $n,m \geq 0$.
\end{theorem}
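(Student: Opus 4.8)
The plan is to verify directly the two defining properties of a Rota-Baxter $G$-Hom-associative algebra for the quadruple $A(n,m) = (A, \mu_{\alpha^n}, \alpha^m, R)$: first, that $(A, \mu_{\alpha^n}, \alpha^m)$ satisfies the $G$-Hom-associative identity \eqref{admi}, and second, that $R$ satisfies the Rota-Baxter identity \eqref{eq:RB} relative to the twisted product $\mu_{\alpha^n}$. The single computational tool driving both steps is the centroid identity $\mu(\alpha^p(x),\alpha^q(y)) = \alpha^{p+q}\mu(x,y)$ recorded just before the statement, which lets powers of $\alpha$ be freely extracted from and absorbed into the arguments of $\mu$; the commutation $\alpha R = R\alpha$ handles the interaction with the Rota-Baxter operator.

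For the first step I would compute the Hom-associator $\anmass$ from its definition \eqref{Hom-associator} using $\mu_{\alpha^n} = \alpha^n\mu$ and twisting map $\alpha^m$. Expanding the leading term, $\mu_{\alpha^n}(\mu_{\alpha^n}(x_1,x_2),\alpha^m(x_3)) = \alpha^n\mu(\alpha^n\mu(x_1,x_2),\alpha^m(x_3))$, and applying the centroid identity with $p = n$, $q = m$ collapses it to $\alpha^{2n+m}\mu(\mu(x_1,x_2),x_3)$; the second term is handled symmetrically and acquires the same power $\alpha^{2n+m}$. This yields the clean relation
\[
\anmass = \alpha^{2n+m}\circ \alphaass,
\]
where $\alphaass$ is the ordinary associator of the $G$-associative algebra $A$ (its $\alpha = \Id$ associator). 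Since $\alpha^{2n+m}$ is linear, applying it to the $G$-associative identity of $A$ gives
\[
\sum_{\sigma\in G}(-1)^{\varepsilon(\sigma)}\,\anmass\circ\sigma = \alpha^{2n+m}\left(\sum_{\sigma\in G}(-1)^{\varepsilon(\sigma)}\,\alphaass\circ\sigma\right) = 0,
\]
so $A(n,m)$ is $G$-Hom-associative. This exactly parallels the associator computation in the proof of Theorem \ref{thm:firsttwist}, except that twisting by a centroid element lets two powers of $\alpha$ merge into a single $\alpha^{2n+m}$ rather than producing a composite $\beta^2$.

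For the second step, starting from the left-hand side of \eqref{eq:RB} for $\mu_{\alpha^n}$, I would write $\mu_{\alpha^n}(R(x),R(y)) = \alpha^n\mu(R(x),R(y))$ and substitute the Rota-Baxter identity of the original algebra $(A,\mu,R)$, obtaining $\alpha^n R\bigl(\mu(R(x),y) + \mu(x,R(y)) + \lambda\mu(x,y)\bigr)$. Using $\alpha^n R = R\alpha^n$ to move $R$ outside, and then recognizing each $\alpha^n\mu(-,-)$ as $\mu_{\alpha^n}(-,-)$, the expression becomes $R\bigl(\mu_{\alpha^n}(R(x),y) + \mu_{\alpha^n}(x,R(y)) + \lambda\mu_{\alpha^n}(x,y)\bigr)$, which is precisely the right-hand side of \eqref{eq:RB} for $\mu_{\alpha^n}$. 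Hence $R$ is a Rota-Baxter operator of weight $\lambda$ for the twisted product.

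No genuine obstacle arises; the only things to watch are the bookkeeping of the exponents of $\alpha$ in the centroid identity (confirming that both outer associator terms pick up the same power $2n+m$, so that $\alpha^{2n+m}$ factors out cleanly) and the single use of $\alpha R = R\alpha$ to commute $\alpha^n$ past $R$. Multiplicativity is not asserted in the statement, so it need not be checked.
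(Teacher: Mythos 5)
Your proposal is correct and follows essentially the same route as the paper: the key relation $\anmass = \alpha^{2n+m}\circ\alphaass$ obtained from the centroid identity, followed by linearity of $\alpha^{2n+m}$ applied to the $G$-associative identity, and the commutation $\alpha^n R = R\alpha^n$ to transfer the Rota-Baxter identity to the twisted product. The only cosmetic difference is that the paper phrases the Rota-Baxter verification via the commutative diagram \eqref{muonerb} with $\beta = \alpha^n$, whereas you write the same argument element-wise.
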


\begin{proof}
The assumption that $\alpha$ lies in the centroid of $A$ implies that the associator of $A$ and the Hom-associator of $A(n,m)$ are related as
\[
\anmass
= \alpha^{2n+m} \circ \alphaass.
\]
Thus, the $G$-associative identity in $A$ implies
\[
\sum_{\sigma\in G} (-1)^{\varepsilon ({\sigma})} \anmass \circ \sigma =
\alpha^{2n+m} \left(\sum_{\sigma\in G} (-1)^{\varepsilon ({\sigma})} \alphaass \circ \sigma\right) = 0.
\]
Therefore, $A(n,m)$ is a $G$-Hom-associative algebra.

To see that the Rota-Baxter identity holds in $A(n,m)$, we can reuse the diagram \eqref{muonerb} with $\beta = \alpha^n$.  Once again the left square is commutative by the Rota-Baxter identity in $A$.  The right rectangle is commutative because $\alpha$ commutes with $R$, and hence so does $\alpha^n$.
\end{proof}

\section{From Rota-Baxter Hom-algebras to Hom-preLie algebras}
\label{sec:rbtoprelie}

In this section, we construct a functor from a full subcategory of the category of Rota-Baxter Hom-Lie-admissible (or Hom-associative) algebras to the category of left Hom-preLie algebras.

\begin{theorem}\label{HAssRBtoHpLie}
Let $( A, \cdot, \alpha,R) $ be a Rota-Baxter  Hom-Lie-admissible algebra of weight $0$ such that $\alpha$ and $R$ commute.  Define the binary operation $\ast$  on $A$ by
\begin{equation}
x\ast y=R(x)\cdot y-y\cdot R(x)=[R(x),y].
\end{equation}
Then
\[
A_1 = (A, \ast, \alpha)
\]
is a left Hom-preLie algebra.
\end{theorem}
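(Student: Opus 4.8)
The plan is to reduce the statement to a computation that lives entirely inside the commutator Hom-Lie algebra. Since $(A,\cdot,\alpha)$ is Hom-Lie-admissible, the commutator bracket $[x,y]=x\cdot y-y\cdot x$ satisfies the Hom-Jacobi identity \eqref{HomJacobiCondition}, so $(A,[\ ,\ ],\alpha)$ is a Hom-Lie algebra. The first step is to observe that $R$ is a Rota-Baxter operator of weight $0$ for this bracket: writing $R(x)\cdot R(y)=R(R(x)\cdot y+x\cdot R(y))$, doing the same for $R(y)\cdot R(x)$, and subtracting gives
\[
[R(x),R(y)] = R\bigl([R(x),y]+[x,R(y)]\bigr).
\]
This is the only form of the Rota-Baxter identity I will use, and together with the Hom-Jacobi identity it replaces every appeal to the (absent) associativity of $\cdot$.

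Having done this, I note $x\ast y=[R(x),y]$, so proving that $(A,\ast,\alpha)$ is left Hom-preLie amounts to verifying the $G_2$-Hom-associative identity \eqref{HomPreLieCond}, i.e. that the Hom-associator
\[
\mathfrak{as}_\ast(x,y,z)=(x\ast y)\ast\alpha(z)-\alpha(x)\ast(y\ast z)
\]
is symmetric in its first two arguments. I will compute $\mathfrak{as}_\ast(x,y,z)$ explicitly. Expanding the first term with the Lie-form Rota-Baxter identity in the guise $R([R(x),y])=[R(x),R(y)]-R([x,R(y)])$, and using $\alpha R=R\alpha$ to write $R(\alpha(x))=\alpha(R(x))$, yields
\[
\mathfrak{as}_\ast(x,y,z)=[[R(x),R(y)],\alpha(z)]-[R([x,R(y)]),\alpha(z)]-[\alpha(R(x)),[R(y),z]].
\]
Applying the Hom-Jacobi identity to the triple $R(x),R(y),z$ rewrites $[[R(x),R(y)],\alpha(z)]$ as $[\alpha(R(x)),[R(y),z]]+[\alpha(R(y)),[z,R(x)]]$, and after cancellation this collapses to the compact form
\[
\mathfrak{as}_\ast(x,y,z)=-[\alpha(R(y)),[R(x),z]]-[R([x,R(y)]),\alpha(z)],
\]
the exact Hom-analogue of the classical Rota-Baxter-to-preLie computation.

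The final step is to form the difference $\mathfrak{as}_\ast(x,y,z)-\mathfrak{as}_\ast(y,x,z)$ and show it vanishes. The two double-bracket terms combine, again via the Hom-Jacobi identity for $R(x),R(y),z$, into $[[R(x),R(y)],\alpha(z)]$. The two $R$-of-bracket terms combine into $[R([y,R(x)]-[x,R(y)]),\alpha(z)]$; since $[y,R(x)]-[x,R(y)]=-\bigl([R(x),y]+[x,R(y)]\bigr)$, the weight-$0$ Rota-Baxter identity from the first step turns this into $-[[R(x),R(y)],\alpha(z)]$, which exactly cancels the previous contribution. Hence $\mathfrak{as}_\ast(x,y,z)=\mathfrak{as}_\ast(y,x,z)$, which is \eqref{HomPreLieCond}, and $A_1=(A,\ast,\alpha)$ is a left Hom-preLie algebra.

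The main obstacle is purely bookkeeping: unlike the classical Lie case, every application of the Jacobi identity introduces an $\alpha$ on the ``outer'' element of a nested bracket, and one must track these twists carefully and invoke $\alpha R=R\alpha$ at exactly the right moments so that the two cancellations in the last step line up. No genuinely new idea beyond the classical argument is required once the reduction to the commutator Hom-Lie algebra is in place; the hypothesis that $\alpha$ and $R$ commute is precisely what makes that reduction compatible with the twisting in the Hom-Jacobi identity.
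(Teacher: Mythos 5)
Your proof is correct; I checked each step, including the induced weight-$0$ Rota--Baxter identity $[R(x),R(y)]=R([R(x),y]+[x,R(y)])$ for the commutator bracket, the reduction of $\mathfrak{as}_\ast(x,y,z)$ to $-[\alpha(R(y)),[R(x),z]]-[R([x,R(y)]),\alpha(z)]$, and the final cancellation of the two copies of $[[R(x),R(y)],\alpha(z)]$. Where it differs from the paper: the paper's proof never leaves the underlying product $\cdot$\,---\,it expands $\alpha(x)\ast(y\ast z)$ and $(x\ast y)\ast\alpha(z)$ into sixteen $\cdot$-monomials, pairs up the four terms of the form $R(R(x)\cdot y)\cdot\alpha(z)$ via the Rota--Baxter identity, and only at the very end recognizes the surviving terms as the cyclic sum $\circlearrowleft_{R(x),R(y),z}[\alpha(R(x)),[R(y),z]]$, which vanishes by Hom-Lie-admissibility. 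You instead isolate the Lie-level Rota--Baxter identity as a lemma at the outset and work with brackets throughout. The two arguments use exactly the same two ingredients (the Hom-Jacobi identity for the commutator applied to the triple $R(x),R(y),z$, and the weight-$0$ Rota--Baxter identity), so computationally they are the same proof in different coordinates; but your packaging buys something real: it shows the theorem is genuinely a statement about Rota--Baxter Hom-Lie algebras (in the paper's terminology, Rota--Baxter $A_3$-Hom-associative algebras with skew-symmetric product) satisfying $\alpha R=R\alpha$, of which the commutator algebra of a Rota--Baxter Hom-Lie-admissible algebra is just one instance, and the intermediate compact form of $\mathfrak{as}_\ast$ makes visible exactly where symmetry would fail without the Rota--Baxter identity.
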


\begin{proof}
For $x,y,z\in A$, we have:
\begin{align*}
\alpha (x)\ast (y\ast z)&= \alpha (x)\ast (R(y)\cdot z-z\cdot R(y))\\
&= R(\alpha (x))\cdot (R(y)\cdot z-z\cdot R(y))-(R(y)\cdot z -z\cdot R(y))\cdot R (\alpha (x))\\
&= \alpha (R(x))\cdot (R(y)\cdot z) -\alpha (R(x))\cdot (z\cdot R(y))\\
&\relphantom{} -(R(y)\cdot z)\cdot \alpha (R (x)) +(z\cdot R(y))\cdot \alpha (R (x)),
\end{align*}
and
\begin{align*}
(x\ast y)\ast \alpha (z)&= (R(x)\cdot y-y\cdot R(x))\ast \alpha (z)\\
\ &=R(R(x)\cdot y-y\cdot R(x))\cdot \alpha (z)- \alpha (z)\cdot R(R(x)\cdot y
-y\cdot R(x))\\
\ &= R(R(x)\cdot y)\cdot \alpha (z)-R(y\cdot R(x))\cdot \alpha (z)\\
&\relphantom{} - \alpha (z)\cdot R(R(x)\cdot y)+\alpha (z)\cdot R(y \cdot R(x)).
\end{align*}
Subtracting the above terms, switching $x$ and $y$, and then subtracting the result yield:
\begin{eqnarray*}
&&\aoneass(y,x,z) - \aoneass(x,y,z)\\
&&= \alpha (x)\ast (y\ast z)-(x\ast y)\ast \alpha (z) -\alpha (y)\ast (x\ast z)+(y\ast x)\ast \alpha (z)\\
&&= \underbrace{\alpha (R(x))\cdot (R(y)\cdot z) -\alpha (R(x))\cdot (z\cdot R(y))-(R(y)\cdot z)\cdot \alpha (R (x)) +(z\cdot R(y))\cdot \alpha (R(x))}_{[\alpha(R(x)),[R(y),z]]}\\
&&\relphantom{} -R(R(x)\cdot y)\cdot \alpha (z)+R(y\cdot R(x))\cdot \alpha (z)+ \alpha (z)\cdot R(R(x)\cdot y)-\alpha (z)\cdot R(y \cdot R(x))\\
&&\relphantom{} \underbrace{- \alpha (R(y))\cdot (R(x)\cdot z) +\alpha (R(y))\cdot (z\cdot R(x))+(R(x)\cdot z)\cdot \alpha (R (y)) -(z\cdot R(x))\cdot \alpha (R(y))}_{[\alpha(R(y)),[z,R(x)]]}\\
&&\relphantom{} +R(R(y)\cdot x)\cdot \alpha (z)-R(x\cdot R(y))\cdot \alpha (z)-\alpha (z)\cdot R(R(y)\cdot x)+\alpha (z)\cdot R(x \cdot R(y)).
\end{eqnarray*}
Gathering the $5^{th}$ and $14^{th}$, $6^{th}$ and $13^{th}$, $7^{th}$ and $16^{th}$, and $8^{th}$ and $15^{th}$ terms, and using the Rota-Baxter identity \eqref{eq:RB} with $\lambda = 0$, we obtain:
\begin{eqnarray*}
&&\aoneass(y,x,z) - \aoneass(x,y,z)\\
&&= [\alpha(R(x)),[R(y),z]] + [\alpha(R(y)),[z,R(x)]]\\
&&\relphantom{} -(R(x)\cdot R(y))\cdot \alpha (z)+(R(y)\cdot R(x))\cdot \alpha (z)+ \alpha (z)\cdot (R(x)\cdot R(y))-\alpha(z)\cdot (R(y) \cdot R(x))\\
&&= [\alpha(R(x)),[R(y),z]] + [\alpha(z),[R(x),R(y)]] + [\alpha(R(y)),[z,R(x)]]\\
&&=~ \circlearrowleft_{R(x),R(y),z}{[\alpha(R(x)),[R(y),z]]}.
\end{eqnarray*}
Hom-Lie-admissibility now implies
\begin{eqnarray*}
\aoneass(y,x,z) - \aoneass(x,y,z)=0,
\end{eqnarray*}
showing that $A_1$ is a left Hom-preLie algebra.
\end{proof}

\begin{theorem}\label{HAssRBtoHpLie2}
Let $( A, \cdot, \alpha,R) $ be a Rota-Baxter  Hom-associative algebra of weight $-1$ such that $\alpha$ and $R$ commute.  Define the operation $\ast$  on $A$ by
\begin{equation}
\begin{split}
x\ast y
&= R(x)\cdot y-y\cdot R(x)- x\cdot y\\
&= [R(x),y]- x\cdot y.
\end{split}
\end{equation}
Then
\[
A_2 = (A, \ast, \alpha)
\]
is a left Hom-preLie algebra.
\end{theorem}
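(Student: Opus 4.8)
The plan is to verify that $A_2$ satisfies the $G_2$-Hom-associative identity \eqref{HomPreLieCond}; equivalently, that its Hom-associator is symmetric in the first two arguments, i.e. $\atwoass(y,x,z) - \atwoass(x,y,z) = 0$ for all $x,y,z \in A$, where $\atwoass(x,y,z) = (x \ast y) \ast \alpha(z) - \alpha(x) \ast (y \ast z)$. The overall shape of the argument should mirror the proof of Theorem \ref{HAssRBtoHpLie}: expand everything, reduce the purely ``bracket'' part to a cyclic Hom-Jacobi sum, and kill that sum by Hom-Lie-admissibility.

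First I would expand $\alpha(x) \ast (y \ast z)$ and $(x \ast y) \ast \alpha(z)$ using the definition $u \ast v = R(u)\cdot v - v \cdot R(u) - u\cdot v$ and the commutation $R\alpha = \alpha R$ (which gives $R(\alpha(x)) = \alpha(R(x))$). The first expression involves $R$ only on single inputs, so no Rota-Baxter identity is needed there. The second expression contains the term $R(x \ast y) = R(R(x)\cdot y) - R(y\cdot R(x)) - R(x\cdot y)$; here I would apply the weight $-1$ Rota-Baxter identity \eqref{eq:RB} to rewrite $R(R(x)\cdot y)$ and $R(y\cdot R(x))$ in terms of products $R(x)\cdot R(y)$ plus correction terms of the form $R(x\cdot y)$. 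The weight $-1$ and the built-in correction $-x\cdot y$ in the definition of $\ast$ are precisely matched so that these pieces combine.

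Next I would form the difference $\atwoass(y,x,z) - \atwoass(x,y,z)$ and sort the terms by how many of the three inputs carry an $R$. The terms with no $R$ assemble, on each of $\atwoass(x,y,z)$ and $\atwoass(y,x,z)$ separately, into the ordinary Hom-associator: indeed the pure product contributions are exactly $(x\cdot y)\cdot\alpha(z) - \alpha(x)\cdot(y\cdot z) = \alphaass(x,y,z)$, which vanishes by Hom-associativity, and similarly with $x,y$ swapped. The terms carrying $R$ on two inputs collect, after the Rota-Baxter substitution, into the cyclic expression $\circlearrowleft_{R(x),R(y),z} [\alpha(R(x)),[R(y),z]]$, exactly as in Theorem \ref{HAssRBtoHpLie}; any leftover single-$R$ product terms should again pair up into copies of $\alphaass$ and drop out by Hom-associativity. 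Since a Hom-associative algebra is in particular Hom-Lie-admissible (Proposition \ref{ThmAdmi}), the Hom-Jacobi identity \eqref{HomJacobiCondition} forces the cyclic sum to vanish, yielding $\atwoass(y,x,z) - \atwoass(x,y,z) = 0$ and hence that $A_2$ is a left Hom-preLie algebra.

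The main obstacle will be the bookkeeping in the middle step: carefully applying the weight $-1$ Rota-Baxter identity to the single compound term $R(x\ast y)$, and then checking that after this substitution every surviving $\cdot$-product term regroups into a vanishing Hom-associator $\alphaass$, so that nothing remains beyond the cyclic bracket sum. Organizing the terms according to whether $R$ hits zero, one, or two of the arguments, as was done for $A_1$, is the way to keep the computation under control.
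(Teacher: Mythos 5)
Your proposal is correct and follows essentially the same route as the paper: expand both sides of the left Hom-preLie identity, apply the weight $-1$ Rota-Baxter identity to the compound term $R(x\ast y)$ (where the $-x\cdot y$ correction in $\ast$ exactly absorbs the extra $R(x\cdot y)$ terms), and cancel everything that remains using Hom-associativity. The only cosmetic difference is in the final step: the paper kills the surviving two-$R$ terms directly as vanishing Hom-associators $\mathfrak{as}_{A}(R(x),R(y),z)$, etc., whereas you route them through the cyclic sum $\circlearrowleft_{R(x),R(y),z}[\alpha(R(x)),[R(y),z]]$ and invoke Hom-Lie-admissibility as in Theorem \ref{HAssRBtoHpLie} --- both finishes are valid since Hom-associativity implies the Hom-Jacobi identity for the commutator.
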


\begin{proof}
For $x,y,z\in A$, we have:
\begin{align*}
\alpha (x)\ast (y\ast z)
&= R(\alpha (x))\cdot (R(y)\cdot z -z\cdot R(y)-y\cdot z)\\
&\relphantom{} - (R(y)\cdot z -z\cdot R(y)-y\cdot z)\cdot R (\alpha (x))\\
&\relphantom{} -\alpha (x)\cdot (R(y)\cdot z-z\cdot R(y)-y\cdot z),
\end{align*}
and
\begin{align*}
(x\ast y)\ast \alpha (z)
&= R(R(x)\cdot y-y\cdot R(x)-x\cdot y)\cdot \alpha (z)\\
&\relphantom{} - \alpha (z)\cdot R(R(x)\cdot y-y\cdot R(x)-x\cdot y),\\
&\relphantom{} -(R(x)\cdot y-y\cdot R(x)-x\cdot y)\cdot \alpha (z).
\end{align*}
Then using the fact that $\alpha$ and $R$ commute, and Hom-associativity we obtain:
\begin{eqnarray*}
&& \atwoass(y,x,z) - \atwoass(x,y,z)\\
&&= \alpha (x)\ast (y\ast z)-(x\ast y)\ast \alpha (z) -\alpha (y)\ast (x\ast z)+(y\ast x)\ast \alpha (z)\\
&&= \alpha (R(x))\cdot (R(y)\cdot z) +(z\cdot R(y))\cdot \alpha (R (x))\\
&& \relphantom{} -\alpha (R(y))\cdot (R(x)\cdot z) -(z\cdot R(x))\cdot \alpha (R (y))\\
&&\relphantom{} -R(R(x)\cdot y)\cdot \alpha (z)+R(y\cdot R(x))\cdot \alpha (z)\\
&&\relphantom{} +R(R(y)\cdot x)\cdot \alpha (z)-R(x\cdot R(y))\cdot \alpha (z)\\
&&\relphantom{} + \alpha (z)\cdot R(R(x)\cdot y)-\alpha (z)\cdot R(y \cdot R(x))\\
&&\relphantom{} -\alpha (z)\cdot R(R(y)\cdot x)+\alpha (z)\cdot R(x \cdot R(y))\\
&&\relphantom{} + R(x\cdot y)\cdot \alpha (z) -\alpha (z)\cdot R(x\cdot y)\\
&&\relphantom{} - R(y\cdot x)\cdot \alpha (z) + \alpha (z)\cdot R(y\cdot x).
\end{eqnarray*}
The above sum vanishes by Hom-associativity and the Rota-Baxter identity \eqref{eq:RB} with $\lambda = -1$.
\end{proof}


\section{Free Rota-Baxter $G$-Hom-associative algebras}
\label{sec:HRB1}

The purpose of this section is to give an explicit construction of the free Rota-Baxter $G$-Hom-associative algebra of weight $\lambda$ associated to a Hom-module.  Recall that $\HomRB_\lambda$ and $\HomMod$ denote the categories of Rota-Baxter $G$-Hom-associative algebras of weight $\lambda$ and of Hom-modules, respectively.  Here is the main result of this section.

\begin{theorem}
\label{thm:freehrb}
The forgetful functor
\[
\cO \colon \HomRB_\lambda \to \HomMod
\]
given by
\[
\cO(A,\mu,\alpha,R) = (A,\alpha)
\]
admits a left adjoint.
\end{theorem}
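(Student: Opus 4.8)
The plan is to construct the left adjoint $\cF$ explicitly and then verify the adjunction through a universal property, exactly as one builds free algebras for an equational theory over the base category $\HomMod$. Fix a Hom-module $(V,\alpha_V)$. First I would build the \emph{free} object carrying a binary operation, a unary Rota-Baxter operator, and a twisting map, with no relations beyond those defining a Hom-module: let $\widehat{\cF}(V)$ be the $\K$-vector space spanned by rooted planar trees whose internal vertices are of three kinds --- binary ``$\mu$''-vertices, unary ``$R$''-vertices, and unary ``$\alpha$''-vertices --- and whose leaves are decorated by elements of $V$, subject only to the identification that an $\alpha$-vertex sitting directly above a leaf decorated by $v$ equals the leaf decorated by $\alpha_V(v)$. (These are the decorated trees alluded to in the Introduction; the $\alpha$-vertices play the role of the weights that bookkeep the twisting map.) On $\widehat{\cF}(V)$ define $\mu$ by grafting two trees beneath a new binary vertex, $R$ by adjoining a unary $R$-vertex at the root, and $\alpha$ by adjoining a unary $\alpha$-vertex at the root (sliding through to $\alpha_V$ on single leaves). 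This is the free Hom-module equipped with one binary and one extra unary operation.

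Next I would impose the defining identities by passing to a quotient. Let $I\subseteq \widehat{\cF}(V)$ be the smallest subspace that contains every instance of the $G$-Hom-associative identity \eqref{admi} and of the Rota-Baxter identity \eqref{eq:RB}, with the variables replaced by arbitrary elements of $\widehat{\cF}(V)$, and that is closed under $a\mapsto \mu(a,b)$, $a\mapsto\mu(b,a)$, $a\mapsto R(a)$ and $a\mapsto\alpha(a)$ for all $b$. Because $I$ is by construction stable under all three operations, the quotient $\cF(V)=\widehat{\cF}(V)/I$ inherits a well-defined $\mu$, $R$ and $\alpha$, and by the choice of generators of $I$ the triple $(\cF(V),\mu,\alpha)$ satisfies \eqref{admi} while $R$ satisfies \eqref{eq:RB}; thus $\cF(V)=(\cF(V),\mu,\alpha,R)$ is a Rota-Baxter $G$-Hom-associative algebra of weight $\lambda$. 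Define $\eta_V\colon V\to\cO\cF(V)$ by sending $v$ to the class of the single-leaf tree decorated by $v$; the leaf identification above makes $\eta_V$ a morphism of Hom-modules.

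It then remains to check the universal property. Given any $(A,\mu_A,\alpha_A,R_A)\in\HomRB_\lambda$ and a Hom-module morphism $f\colon V\to\cO(A)$, define $\widehat{f}\colon\widehat{\cF}(V)\to A$ by tree evaluation: apply $f$ at the leaves and then work upward, interpreting each binary vertex as $\mu_A$, each $R$-vertex as $R_A$, and each $\alpha$-vertex as $\alpha_A$. This $\widehat{f}$ is the unique linear map extending $f$ that commutes with $\mu$, $R$ and $\alpha$, and the leaf identification is respected precisely because $f\alpha_V=\alpha_A f$. Since $A$ satisfies \eqref{admi} and \eqref{eq:RB}, the map $\widehat{f}$ annihilates every generator of $I$ and hence all of $I$, so it descends to $\overline{f}\colon\cF(V)\to A$. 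By construction $\overline{f}$ is a morphism in $\HomRB_\lambda$ with $\cO(\overline{f})\circ\eta_V=f$, and it is the unique such morphism because any morphism out of $\cF(V)$ is determined on the generators $\eta_V(V)$ and must respect all three operations. This gives a bijection $\Hom_{\HomRB_\lambda}(\cF(V),A)\cong\Hom_{\HomMod}(V,\cO(A))$, natural in both variables, so $\cF$ is left adjoint to $\cO$.

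The main obstacle is the bookkeeping of the twisting map in the quotient. Because a Rota-Baxter $G$-Hom-associative algebra is \emph{not} required to be multiplicative, $\alpha$ cannot be distributed through $\mu$ and $R$ as powers; it must survive as genuine $\alpha$-vertices in the trees, identified with $\alpha_V$ only when applied directly to a generator. Verifying carefully that the subspace $I$ is closed under left and right multiplication by arbitrary elements, under $R$, and under $\alpha$ --- so that $\mu$, $R$ and $\alpha$ truly descend to $\cF(V)$ --- is the core of the argument. For $G\neq G_1$ one must also confirm that imposing the single linear relation \eqref{admi}, rather than full Hom-associativity, produces exactly the universal $G$-Hom-associative structure; this is precisely what makes $\widehat{f}$ factor through $\cF(V)$ for every target $A$, and matching the imposed relations to the defining identities of $A$ is where the verification must be done with care.
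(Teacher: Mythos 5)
Your proposal is correct and follows essentially the same route as the paper: first build the absolutely free object on the Hom-module via decorated trees (with $\alpha$ absorbed into $\alpha_V$ when applied directly to a generator, which is exactly the subtle point the paper also handles), then quotient by the operation-closed ideal generated by the $G$-Hom-associative and Rota-Baxter relations, and verify the universal property by tree evaluation. The only cosmetic differences are that the paper encodes the unary operations as integer-sequence decorations on binary trees rather than as unary vertices, packages the two steps as a composite of two adjunctions through an intermediate category $\bD$, and writes the free module precisely as $\bigoplus_{\tau} V^{\otimes n}_{\tau}$ to make multilinearity in the leaf labels explicit.
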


\begin{proof}
We consider an intermediate category $\bD$, which we will make precise below.  There are forgetful functors
\begin{equation}
\label{eq:O3}
\HomRB_\lambda \xrightarrow{\cO_2} \bD \xrightarrow{\cO_1} \HomMod,
\end{equation}
whose composition is $\cO$.  We will show that each of these two forgetful functors $\cO_i$ admits a left adjoint $\cF_i$ (Theorems \ref{thm:freeD} and \ref{thm:G}).  The composition
\[
\cF = \cF_2 \circ \cF_1
\]
is then the desired left adjoint in Theorem \ref{thm:freehrb}.  It remains to prove Theorems \ref{thm:freeD} and \ref{thm:G}.
\end{proof}

We begin by defining the intermediate category $\bD$.

\subsection{The category $\mathbf{D}$}
\label{subsec:D}

Let $\mathbf{D}$ be the category whose objects are quadruples $(A,\mu,\alpha,R)$ in which:
\begin{enumerate}
\item $A$ is a $\K$-module,
\item $\mu \colon A^{\otimes 2} \to A$ is a bilinear operation, and
\item $\alpha$ and $R$ are linear self-maps $A \to A$.
\end{enumerate}
A morphism $f \colon (A,\mu_A,\alpha_A,R_A) \to (B,\mu_B,\alpha_B,R_B)$ in $\mathbf{D}$ consists of a linear map $f \colon A \to B$ such that $f \circ \mu_A = \mu_B \circ f^{\otimes 2}$, $f \circ \alpha_A = \alpha_B \circ f$, and $f \circ R_A = R_B \circ f$.

There is a forgetful functor
   \begin{equation}
   \label{eq:O1}
   \cO_1 \colon \bD \to \HomMod
   \end{equation}
defined as
\[
\cO_1(A,\mu,\alpha,R) = (A,\alpha).
\]

\begin{theorem}
\label{thm:freeD}
The functor $\cO_1$ in \eqref{eq:O1} admits a left adjoint
\[
\cF_1 \colon \HomMod \to \mathbf{D}.
\]
\end{theorem}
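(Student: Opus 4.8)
\textit{Strategy.} The plan is to construct $\cF_1$ as a free-algebra functor for the three operations $\mu$ (binary), $R$ (unary), and $\alpha$ (unary), subject to the single constraint imposed by the fact that $\cO_1$ remembers the twisting map: the unit $(V,\alpha_V)\to\cO_1\cF_1(V,\alpha_V)$ must commute with $\alpha$. Since $\bD$ carries no axioms relating $\mu$, $\alpha$, and $R$, the free object is spanned by formal trees, and the only identification needed is the relation $\alpha(v)=\alpha_V(v)$ on generators.

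Concretely, first I would fix a Hom-module $(V,\alpha_V)$ and define the set of \emph{reduced decorated trees}: planar rooted trees whose internal vertices are either binary (one for each application of $\mu$) or unary of two colors (one color for $\alpha$, one for $R$), whose leaves are labeled by a basis of $V$, subject to the single reduction rule that no $\alpha$-colored unary vertex sits immediately above a leaf. Let $\cF_1(V,\alpha_V)$ be the free $\K$-module on these reduced trees; equivalently it is $\bigoplus_\tau V^{\otimes \ell(\tau)}$, summed over reduced tree-shapes $\tau$ with $\ell(\tau)$ leaves, linearity in the leaf slots being built in. Next I would define the three operations: $R(x)$ is the tree obtained by attaching an $R$-colored root above $x$; $\mu(x,y)$ is the grafting of $x$ and $y$ onto a new binary root; and $\alpha(x)$ is the tree with an $\alpha$-colored root above $x$, \emph{except} when $x$ is a single leaf labeled $v$, in which case $\alpha(x)$ is declared to be the leaf labeled $\alpha_V(v)$. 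This last clause is exactly the reduction rule, keeps the output reduced, and is the only place where the module structure of $(V,\alpha_V)$ enters. Define the unit $\eta\colon V\to\cF_1(V,\alpha_V)$ by sending $v$ to the one-leaf tree labeled $v$; the reduction clause gives $\alpha(\eta(v))=\eta(\alpha_V(v))$, so $\eta$ is a morphism in $\HomMod$.

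Finally I would verify the universal property. Given an object $(A,\mu_A,\alpha_A,R_A)$ of $\bD$ and a $\HomMod$-morphism $g\colon(V,\alpha_V)\to(A,\alpha_A)$, define $\widetilde{g}$ on trees by induction: $\widetilde{g}(\text{leaf } v)=g(v)$, $\widetilde{g}(R(x))=R_A(\widetilde{g}(x))$, $\widetilde{g}(\mu(x,y))=\mu_A(\widetilde{g}(x),\widetilde{g}(y))$, and $\widetilde{g}(\alpha(x))=\alpha_A(\widetilde{g}(x))$. By construction $\widetilde{g}$ commutes with $\mu$, $\alpha$, and $R$, so it is a $\bD$-morphism, and $\widetilde{g}\circ\eta=g$; uniqueness holds because every reduced tree is obtained from leaves by finitely many applications of the three operations, forcing the above recursion.

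\textit{Main obstacle.} The one point requiring care is well-definedness of $\widetilde{g}$ with respect to the reduction rule: one must check that the two ways of evaluating an $\alpha$ on a leaf agree, i.e. that $\widetilde{g}(\text{leaf }\alpha_V(v))=\alpha_A(\widetilde{g}(\text{leaf } v))$. This is precisely the equation $g(\alpha_V(v))=\alpha_A(g(v))$, which holds because $g$ is a Hom-module morphism; indeed, this compatibility is the whole reason the reduction rule is both necessary (otherwise $\eta$ would fail to commute with $\alpha$) and consistent. With it in hand, $\eta$ exhibits $\cF_1$ as a left adjoint to $\cO_1$.
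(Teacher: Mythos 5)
Your proposal is correct and takes essentially the same approach as the paper: where you record applications of $\alpha$ and $R$ as colored unary vertices, the paper records them as integer-sequence decorations on the nodes of planar binary trees, and your reduction rule (no $\alpha$-vertex directly above a leaf) is exactly the paper's normalization that a leaf's decoration $(a_k,\ldots,a_1)$ with $a_1=0$ must have $k=1$. The universal property is verified by the same recursion, including the same key compatibility check that evaluating $\alpha$ on a bare generator via $\alpha_V$ agrees with applying $\alpha_A$ afterwards because the given map is a morphism of Hom-modules.
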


In other words, the functor $\cF_1$ associates to a Hom-module its free object in $\mathbf{D}$.  In order to construct the free functor $\cF_1$, we need to freely generate products (for $\mu$) and images of $R$, while keeping $\alpha$ defined.  This involves an elaborate process of bookkeeping, for which we use the notion of decorated trees.  The proof of Theorem \ref{thm:freeD} will be given after the following preliminary discussions of trees.

\subsection{Planar binary trees}
\label{subsec:trees}

For $n \geq 1$, let $T_n$ denote the set of (isomorphism classes of) planar binary trees with $n$ leaves and one root.  The first few $T_n$ are depicted below.
   \[
   T_1 = \left\lbrace \tone \, \right\rbrace,~
   T_2 = \left\lbrace \ttwo \right\rbrace,~
   T_3 = \left\lbrace \tthreeone,\, \tthreetwo \right\rbrace,~
   T_4 = \left\lbrace \tfourone,\, \tfourtwo,\, \tfourthree,\, \tfourfour,\, \tfourfive \right\rbrace.
   \]
Each node represents either a leaf, which is always depicted at the top, or an internal vertex.  An element in $T_n$ will be called an \textbf{$n$-tree}.  The set of nodes ($=$ leaves and internal vertices) in a tree $\psi$ is denoted by $N(\psi)$.  The node of an $n$-tree $\psi$ that is connected to the root (the lowest point in the $n$-tree) will be denoted by $v_{low}$.  In other words, $v_{low}$ is the lowest internal vertex in $\psi$ if $n \geq 2$ and is the only leaf if $n = 1$.

Given an $n$-tree $\psi$ and an $m$-tree $\varphi$, their \textbf{grafting}
\[
\psi \vee \varphi \in T_{n+m}
\]
is the tree obtained by placing $\psi$ on the left and $\varphi$ on the right and joining their roots to form the new lowest internal vertex, which is connected to the new root.  Pictorially, we have
   \[
   \psi \vee \varphi \,=\, \psiwedgephi.
   \]
Note that grafting is a nonassociative operation.  As we will discuss below, the operation of grafting is for generating the multiplication $\mu$.

Conversely, by cutting the two upward branches from the lowest internal vertex, each $n$-tree $\psi$ with $n \geq 2$ can be uniquely represented as the grafting of two trees, say, $\psi_1 \in T_p$ and $\psi_2 \in T_q$, where $p + q = n$.  By iterating the grafting operation, one can show by a simple induction argument that every $n$-tree $(n \geq 2)$ can be obtained as an iterated grafting of $n$ copies of the $1$-tree.

\subsection{Decorated trees}
\label{subsec:decorated}

By a \textbf{decorated $n$-tree}, we mean a pair $(\psi,f)$ consisting of an $n$-tree $\psi$ and a function $f$ from the set of nodes $N(\psi)$ to the set of non-empty finite sequences of non-negative integers, satisfying the following two conditions.
   \begin{enumerate}
   \item For any $v \in N(\psi)$, if
   \begin{equation}
   \label{eq:decoration}
   f(v) = (a_k, a_{k-1}, \ldots , a_2, a_1),
   \end{equation}
   then $a_j > 0$ for all $j \geq 2$.
   \item If $v \in N(\psi)$ is a leaf and $f(v)$ is as in \eqref{eq:decoration}, then $a_1 = 0$ implies $k = 1$.
   \end{enumerate}
The set of decorated $n$-trees is denoted by $\overline{T}_n$.  For $v \in N(\psi)$, the finite sequence $f(v)$ is called the \textbf{decoration of $v$}. For example, here is a decorated $3$-tree:
   \begin{equation}
   \label{eq:dec3tree}
   \decthreetree
   \end{equation}
The decoration of each node is depicted right next to or on top of it.

As we will discuss below in \eqref{eq:opB}, decorated $n$-trees give us a way of composing $n$ elements in a Rota-Baxter $G$-Hom-associative algebra or, more generally, in an object in $\bD$.  Here let us give a heuristic explanation.  The decoration
\[
f(v) = (a_k, a_{k-1}, \ldots , a_2, a_1)
\]
of a node $v$ is for generating the operation
\[
\cdots \alpha^{a_4} R^{a_3} \alpha^{a_2} R^{a_1}.
\]
The requirement $a_j > 0$ for $j \geq 2$ in \eqref{eq:decoration} is imposed because we do not need to generate the identity operation $\alpha^0 = R^0$.  The other requirement, that $a_1 = 0$ implies $k=1$ in \eqref{eq:decoration} for a leaf, is imposed because a Hom-module $(M,\alpha)$ already has the operations $\alpha^n$ for all $n \geq 1$.  Thus, we begin by generating the operations $R^n$ for $n \geq 1$.

Given a decorated $n$-tree $(\psi,f)$ and elements $x_1, \ldots , x_n$ in an object $A$ in $\bD$, imagine placing $x_i$ at the $i$th leaf, counting from left to right, of $\psi$.  The decoration $(\ldots,a_2,a_1)$ at the $i$th leaf tells us that we should apply $\cdots R^{a_3}\alpha^{a_2}R^{a_1}$ to $x_i$.  Once this is done to each of the $x_i$, we move downward along the branches.  At each internal node, we first multiply the two elements coming from the two branches above the node.  Then we apply $\cdots R^{b_3}\alpha^{b_2}R^{b_1}$ to the resulting product according to the decoration $(\ldots, b_2, b_1)$ of that node.  Once we have done this for the lowest internal node, we have an element in $A$.

\subsection{Operations on decorated trees}
\label{subsec:op}

In order to parametrize $n$-ary operations in an object in $\bD$, we need operations on decorated trees corresponding to $\mu$, $R $, and $\alpha$, which we now discuss.

If $(\psi,f) \in \overline{T}_n$ and $(\varphi,g) \in \overline{T}_m$ are decorated trees, then their \textbf{grafting} is the $(n+m)$-tree $\psi \vee \varphi$ with decoration
   \[
   h(v) =
   \begin{cases}
   f(v) & \text{if $v \in N(\psi)$}, \\
   g(v) & \text{if $v \in N(\varphi)$}, \\
   (0) & \text{if $v = v_{low}$ in $\psi \vee \varphi$}.
   \end{cases}
   \]

For each $n \geq 1$, define a function $R \colon \overline{T}_n \to \overline{T}_n$ by setting
   \[
   R(\psi,f) = (\psi, R  f),
   \]
where $R f$ is equal to $f$, except that the decoration
   \[
   f(v_{low}) = (a_k, a_{k-1}, \ldots , a_1)
   \]
of the lowest internal vertex (or the only leaf if $\psi \in T_1$) is replaced by
   \begin{equation}
   \label{eq:betaf}
   (R f)(v_{low}) =
   \begin{cases}
   (a_k + 1, a_{k-1}, \ldots , a_1) & \text{if $k$ is odd}, \\
   (1, a_k, a_{k-1}, \ldots , a_1) & \text{if $k$ is even}
   \end{cases}
   \end{equation}

Denote by $(1,(a_1))$ the $1$-tree with decoration $a_1$ for its only leaf.  Define the functions
   \[
   \alpha \colon
   \begin{cases}
   \overline{T}_1 \diagdown \lbrace (1,(0))\rbrace \to \overline{T}_1 \diagdown \lbrace (1,(0))\rbrace, \\
   \overline{T}_n \to \overline{T}_n & \text{for $n \geq 2$}
   \end{cases}
   \]
by setting
   \[
   \alpha(\psi,f) = (\psi,\alpha f),
   \]
where $\alpha f$ is equal to $f$, except that
   \begin{equation}
   \label{eq:alphaf}
   (\alpha f)(v_{low}) =
   \begin{cases}
   (1, a_k, a_{k-1}, \ldots , a_1) & \text{if $k$ is odd}, \\
   (a_k + 1, a_{k-1}, \ldots , a_1) & \text{if $k$ is even}.
   \end{cases}
   \end{equation}
We emphasize that $\alpha(1,(0))$ is not defined.

Note that a simple induction argument shows that every decorated $n$-tree $(\psi,f)$ can be obtained uniquely from $n$ copies of $(1,(0))$ by iterating the operations $\vee$, $\alpha$, and $R$.

\subsection{Decorated trees acting on $\mathbf{D}$}
\label{subsec:opD}

Let $(B,\mu,\alpha_B,R_B)$ be an object in $\mathbf{D}$.  For each $n \geq 1$, define a map
   \begin{equation}
   \label{eq:opB}
   \gamma \colon \bbK \lbrack \overline{T}_n \rbrack \otimes B^{\otimes n} \to B
   \end{equation}
as follows.  Pick elements $\tau \in \overline{T}_n$, $\sigma \in \overline{T}_m$, and $b_1, b_2, \ldots \in B$.  Then we set
   \[
   \gamma((1,(0)); b_1) = b_1,
   \]
and, inductively,
   \[
   \begin{split}
   \gamma(\alpha\tau; b_1 \otimes \cdots \otimes b_n) &= \alpha_B(\gamma(\tau; b_1 \otimes \cdots \otimes b_n)), \\
   \gamma(R\tau; b_1 \otimes \cdots \otimes b_n) &= R_B(\gamma(\tau; b_1 \otimes \cdots \otimes b_n)), \\
   \gamma(\tau \vee \sigma; b_1 \otimes \cdots \otimes b_{n+m}) &= \mu(\gamma(\tau; b_1 \otimes \cdots \otimes b_n), \gamma(\sigma; b_{n+1} \otimes \cdots \otimes b_{n+m})).
   \end{split}
   \]
For example, for the decorated $3$-tree $\tau$ in \eqref{eq:dec3tree}, we have
   \[
   \gamma(\tau; b_1 \otimes b_2 \otimes b_3) =
   R \alpha^8\left\lbrace [\alpha^7R^4\alpha^9R^2((R^3\alpha^5R^2(b_1)) \cdot b_2)] \cdot [\alpha^2R^6(b_3)]\right\rbrace.
   \]
Here, and in what follows, we write $\mu(x,y)$ as $x \cdot y$ or even $xy$.

\begin{proof}[Proof of Theorem \ref{thm:freeD}]
Pick a Hom-module $(A,\alpha_A)$.  Consider the $\K$-module
   \begin{equation}
   \label{eq:FA}
   \cF_1(A) = \bigoplus_{n \geq 1,\, \tau \in \overline{T}_n} A^{\otimes n}_\tau,
   \end{equation}
where each $A^{\otimes n}_\tau$ is a copy of $A^{\otimes n}$.  Identify $A$ as a submodule of $\cF_1(A)$ via the inclusion
   \begin{equation}
   \label{eq:iota}
   \iota \colon A \xrightarrow{=} A_{(1,(0))} \hookrightarrow \cF_1(A).
   \end{equation}
A typical generator in $A^{\otimes n}_\tau$ is denoted by $(a_1 \otimes \cdots \otimes a_n)_\tau$.  We claim that $\cF_1$ is the desired left adjoint of $\cO_1$.  First, for $\cF_1(A)$ to be an object in $\bD$, we need to equip $\cF_1(A)$ with the three operations $\mu$, $\alpha$, and $R$.  We make use of the operations on decorated trees discussed in section \ref{subsec:op}

The bilinear operation
\[
\mu \colon \cF_1(A) \otimes \cF_1(A) \to \cF_1(A)
\]
is defined as
   \begin{equation}
   \label{eq:muF}
   \mu\left((a_1 \otimes \cdots \otimes a_n)_\tau, (a_{n+1} \otimes \cdots \otimes a_{n+m})_\sigma\right) = (a_1 \otimes \cdots \otimes a_{n+m})_{\tau \vee \sigma}
   \end{equation}
on the generators.  The linear operations $\alpha$ and $R$ on $\cF_1(A)$ are defined as
   \begin{equation}
   \label{eq:alphaF}
   \begin{split}
   \alpha\left((a_1 \otimes \cdots \otimes a_n)_\tau\right) &=
   \begin{cases}
   (a_1 \otimes \cdots \otimes a_n)_{\alpha(\tau)} & \text{if $\tau \not= (1,(0))$},\\
   \left(\alpha_A(a_1)\right)_{(1,(0))} & \text{if $\tau = (1,(0))$},
   \end{cases}\\
   R\left((a_1 \otimes \cdots \otimes a_n)_\tau\right) &= (a_1 \otimes \cdots \otimes a_n)_{R(\tau)}.
   \end{split}
   \end{equation}
With these operations, $\cF_1(A)$ becomes an object in $\mathbf{D}$.  It is clear how $\cF_1$ is defined on morphisms of Hom-modules and that $\cF_1$ defines a functor $\HomMod \to \mathbf{D}$.  Moreover, $\iota$ extends to a morphism
\[
\iota \colon A \to \cF_1(A)
\]
of Hom-modules.

To show that $\cF_1$ is the left adjoint of the forgetful functor $\cO_1$, pick an object $(B,\mu_B,\alpha_B,R_B) \in \mathbf{D}$. Let $f \colon (A,\alpha_A) \to (B,\alpha_B)$ be a morphism of Hom-modules.  We need to show that there exists a unique morphism
\[
\varphi \colon \cF_1(A) \to B \in \mathbf{D}
\]
such that
   \begin{equation}
   \label{eq:f}
   f = \varphi \circ \iota.
   \end{equation}
Define $\varphi \colon \cF_1(A) \to B$ by setting
   \begin{equation}
   \label{eq:varphi}
   \varphi\left((a_1 \otimes \cdots \otimes a_n)_\tau\right)
   = \gamma(\tau; f(a_1) \otimes \cdots \otimes f(a_n))
   \end{equation}
on the generators and extending linearly, where $\gamma$ is defined in \eqref{eq:opB}.  It is clear that \eqref{eq:f} is satisfied.

It remains to show that $\varphi$ is a morphism in $\mathbf{D}$.  Suppose that $\tau \not= (1,(0))$.  To show that $\varphi$ is compatible with $\alpha$, we compute as follows:
   \[
   \begin{split}
   \varphi\left(\alpha(a_1 \otimes \cdots \otimes a_n)_\tau\right)
   &= \varphi\left((a_1\otimes \cdots \otimes a_n)_{\alpha(\tau)}\right)\\
   &= \gamma(\alpha(\tau); f(a_1) \otimes \cdots \otimes f(a_n)) \\
   &= \alpha_B\left(\gamma(\tau; f(a_1) \otimes \cdots \otimes f(a_n))\right)\\
   &= \alpha_B\left(\varphi((a_1 \otimes \cdots \otimes a_n)_\tau)\right).
   \end{split}
   \]
Replacing $\alpha$ by $R$, the same argument, regardless of whether $\tau = (1,(0))$ or not, also shows that $\varphi$ is compatible with $R$.  Now if $\tau = (1,(0))$, then we have
   \[
   \begin{split}
   \varphi\left(\alpha((a)_{(1,(0))})\right)
   &= \varphi\left((\alpha_A(a))_{(1,(0))}\right)\\
   &= \gamma\left((1,(0)); f(\alpha_A(a))\right)\\
   &= f(\alpha_A(a))\\
   &= \alpha_B(f(a))\\
   &= \alpha_B\gamma\left((1,(0)); f(a)\right)\\
   &= \alpha_B\varphi\left((a)_{(1,(0))}\right).
   \end{split}
   \]
This shows that $\varphi$ is compatible with $\alpha$.

Likewise, to show that $\varphi$ is compatible with $\mu$, we compute as follows:
   \[
   \begin{split}
   \varphi&\left(\mu\left((a_1 \otimes \cdots \otimes a_n)_\tau, (a_{n+1} \otimes \cdots \otimes a_{n+m})_\sigma\right)\right) \\
   &= \varphi\left((a_1 \otimes \cdots \otimes a_{n+m})_{\tau \vee \sigma}\right)\\
   &= \gamma\left(\tau \vee \sigma; f(a_1) \otimes \cdots \otimes f(a_{n+m})\right) \\
   &= \mu_B\left(\gamma(\tau; f(a_1) \otimes \cdots \otimes f(a_n)), \gamma(\sigma; f(a_{n+1}) \otimes \cdots \otimes f(a_{n+m}))\right) \\
   &= \mu_B\left(\varphi((a_1 \otimes \cdots \otimes a_n)_\tau), \varphi((a_{n+1} \otimes \cdots \otimes a_{n+m})_\sigma)\right).
   \end{split}
   \]
This shows that $\varphi \colon \cF_1(A) \to B$ is a morphism in $\mathbf{D}$.

The uniqueness of $\varphi$ is clear.  Indeed, suppose that
\[
\phi \colon \cF_1(A) \to B \in \mathbf{D}
\]
is another morphism such that
\[
f = \phi \circ \iota.
\]
The definitions \eqref{eq:muF} and \eqref{eq:alphaF} and the fact that $\phi$ is compatible with $\mu$, $\alpha$, and $R$ together imply that $\phi$ is uniquely determined by $\phi((a)_{(1,(0))})$, i.e., its restriction to $A$.  But the restriction of $\phi$ to $A$ is equal to $f$, so $\phi$ must be equal to $\varphi$.
\end{proof}

To finish the proof of Theorem \ref{thm:freehrb}, consider the forgetful functor
   \begin{equation}
   \label{eq:O2}
   \cO_2 \colon \HomRB_\lambda \to \mathbf{D}
   \end{equation}
that simply forgets the extra axioms satisfied by $\mu$, $\alpha$, and $R$.  This forgetful functor is a full and faithful embedding.  In particular, we identify $\HomRB_\lambda$ as a full subcategory of $\mathbf{D}$ using $\cO_2$.

\begin{theorem}
\label{thm:G}
The category $\HomRB_\lambda$ is a reflective subcategory of $\bD$, i.e., the functor $\cO_2$ in \eqref{eq:O2} admits a left adjoint
\[
\cF_2 \colon \mathbf{D} \to \HomRB_\lambda.
\]
\end{theorem}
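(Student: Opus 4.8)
The plan is to construct $\cF_2$ as a reflection: to each object of $\bD$ I attach its universal quotient satisfying the two defining identities of $\HomRB_\lambda$. The key observation is that $\bD$ is a variety of $\K$-linear algebraic structures (one bilinear operation $\mu$ and two linear operations $\alpha,R$, with no axioms), and $\HomRB_\lambda$ is the subvariety cut out by the $G$-Hom-associative identity \eqref{admi} and the Rota-Baxter identity \eqref{eq:RB}; the inclusion of such a subvariety always admits a left adjoint. Concretely, since $\mu$ is bilinear and $\alpha,R$ are linear, a congruence on an object of $\bD$ is the same datum as a $\K$-submodule $I\subseteq A$ with $\mu(I,A)+\mu(A,I)\subseteq I$, $\alpha(I)\subseteq I$, and $R(I)\subseteq I$; I call such an $I$ a \emph{$\bD$-ideal}. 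For any $\bD$-ideal $I$ the quotient $B/I=(A/I,\bar\mu,\bar\alpha,\bar R)$ is again an object of $\bD$, and the projection $\pi\colon B\to B/I$ is a morphism in $\bD$.

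First I would fix $B=(A,\mu,\alpha,R)\in\bD$ and let $I_B\subseteq A$ be the intersection of all $\bD$-ideals that contain the \emph{associativity defects} $\sum_{\sigma\in G}(-1)^{\varepsilon(\sigma)}\alphaass(x_{\sigma(1)},x_{\sigma(2)},x_{\sigma(3)})$ for all $x_1,x_2,x_3\in A$ together with the \emph{Rota-Baxter defects} $R(x)\cdot R(y)-R(R(x)\cdot y+x\cdot R(y)+\lambda\,x\cdot y)$ for all $x,y\in A$. An intersection of $\bD$-ideals is again a $\bD$-ideal, and $A$ itself is one such ideal, so $I_B$ is well defined and is the smallest $\bD$-ideal containing both defect families. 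I then set $\cF_2(B)=B/I_B$.

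Next I would check that $\cF_2(B)\in\HomRB_\lambda$. Since $\pi$ is surjective, any elements of $A/I_B$ lift to $A$; applying $\pi$ to a defect computed in $A$ and using that $\pi$ commutes with $\mu,\alpha,R$ shows that the corresponding defect in $A/I_B$ equals $\pi$ of an element of $I_B$, hence is zero. Because the two defect families are $\K$-multilinear in their arguments, this holds for \emph{all} elements of $A/I_B$, so $\cF_2(B)$ satisfies both \eqref{admi} and \eqref{eq:RB} and lies in $\HomRB_\lambda$. Functoriality is routine: a morphism $h\colon B\to B'$ in $\bD$ sends defects to defects since it commutes with all operations, hence carries $I_B$ into $I_{B'}$ and descends to the quotients.

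Finally I would verify the universal property giving $\cF_2\dashv\cO_2$. Given $C\in\HomRB_\lambda$ (identified with $\cO_2 C$) and a morphism $g\colon B\to C$ in $\bD$, the image under $g$ of each generator of $I_B$ is a defect computed in $C$ (again because $g$ commutes with $\mu,\alpha,R$), and these vanish as $C$ satisfies both defining identities; since $\ker g$ is a $\bD$-ideal this forces $I_B\subseteq\ker g$, so $g$ factors uniquely as $g=\bar g\circ\pi$ with $\bar g\colon\cF_2(B)\to C$, uniqueness following from surjectivity of $\pi$. The resulting natural bijection between $\bD(B,\cO_2 C)$ and $\HomRB_\lambda(\cF_2(B),C)$ is the desired adjunction. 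The one point deserving genuine care—as opposed to formal bookkeeping—is the multilinearity of the two defect families, which is precisely what lets imposing them on generators of $A$ force the full identities on the quotient; everything else is the standard reflection of a subvariety inside a variety of linear structures.
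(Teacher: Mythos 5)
Your proposal is correct and follows essentially the same route as the paper: form the quotient of an object of $\bD$ by the ideal generated by the $G$-Hom-associative and Rota-Baxter defects, check the quotient lands in $\HomRB_\lambda$ via surjectivity of the projection, and verify the universal property using that a morphism into an object of $\HomRB_\lambda$ kills the defects and has an ideal as its kernel. The only cosmetic difference is that you construct the generated ideal from above as an intersection of all $\bD$-ideals containing the defects, whereas the paper builds it from below as an increasing union of spans of parenthesized monomials (its Proposition \ref{prop:ideal}); the two constructions agree and support the same factorization argument.
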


In order to prove Theorem \ref{thm:G}, we need to introduce ideals and quotients for objects in $\mathbf{D}$.

\subsection{Ideals and quotients in $\mathbf{D}$}
\label{subsec:ideal}

Let $(A,\mu,\alpha,R)$ be an object in $\mathbf{D}$.  An \emph{ideal in $A$} is a submodule $I \subseteq A$ that is closed under both $\alpha$ and $R$ and such that
\[
\mu(I,A) \subseteq I \andspace \mu(A,I) \subseteq I.
\]
In this case, the quotient module $A/I$ is naturally an object in $\mathbf{D}$ in which $\mu$, $\alpha$, and $R$ are induced by those in $A$.

For example, if $f \colon A \to B$ is a morphism in $\mathbf{D}$, then the kernel of $f$ is an ideal in $A$.

\begin{proposition}
\label{prop:ideal}
Let $(A,\mu,\alpha,R)$ be an object in $\mathbf{D}$ and $S$ be a non-empty subset in $A$.  Then there exists a unique ideal $\langle S \rangle$ in $A$ such that
   \begin{enumerate}
   \item $S \subseteq \langle S \rangle$, and
   \item if $I$ is an ideal in $A$ that contains $S$, then $I$ also contains $\langle S \rangle$.
   \end{enumerate}
\end{proposition}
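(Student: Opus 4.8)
The plan is to construct $\langle S \rangle$ as the intersection of all ideals containing $S$, then verify this intersection is itself an ideal and satisfies the universal property (1)--(2). First I would observe that the whole module $A$ is an ideal containing $S$, so the collection
\[
\mathcal{I}_S = \set{I : I \text{ is an ideal in } A \text{ and } S \subseteq I}
\]
is non-empty. I would then define
\[
\langle S \rangle = \bigcap_{I \in \mathcal{I}_S} I.
\]

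The key step is to check that $\langle S \rangle$ is genuinely an ideal. This is a routine verification that intersections of ideals are closed under the relevant operations: if $x \in \langle S \rangle$, then $x \in I$ for every $I \in \mathcal{I}_S$, so $\alpha(x) \in I$ and $R(x) \in I$ for each such $I$ (since each $I$ is closed under $\alpha$ and $R$), whence $\alpha(x), R(x) \in \langle S \rangle$. Similarly, for $x \in \langle S \rangle$ and $a \in A$, the products $\mu(x,a)$ and $\mu(a,x)$ lie in every $I \in \mathcal{I}_S$ because each $I$ absorbs products on both sides, so they lie in $\langle S \rangle$. Submodule closure under addition and scalar multiplication follows the same way. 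Thus $\langle S \rangle$ is an ideal.

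Finally I would verify the two defining properties. Property (1), that $S \subseteq \langle S \rangle$, is immediate since $S \subseteq I$ for every $I \in \mathcal{I}_S$ by definition, hence $S$ is contained in their intersection. Property (2) is also immediate: if $I$ is any ideal containing $S$, then $I \in \mathcal{I}_S$, so $\langle S \rangle = \bigcap_{J \in \mathcal{I}_S} J \subseteq I$. Uniqueness follows from (1) and (2) in the standard way: if $\langle S \rangle'$ were another ideal satisfying both properties, then applying (2) for $\langle S \rangle$ to the ideal $\langle S \rangle'$ (which contains $S$) gives $\langle S \rangle \subseteq \langle S \rangle'$, and symmetrically $\langle S \rangle' \subseteq \langle S \rangle$, forcing equality.

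There is no substantial obstacle here; this is the standard ``intersection of all substructures containing $S$'' argument, and the only thing to be careful about is confirming that the defining closure conditions for an ideal in $\mathbf{D}$ (closure under $\alpha$, closure under $R$, and two-sided absorption under $\mu$) are each preserved under arbitrary intersection, which they manifestly are because they are stated elementwise. An alternative, more constructive approach would build $\langle S \rangle$ explicitly as the span of all elements obtainable from $S$ by iterated application of $\alpha$, $R$, and left/right multiplication by elements of $A$; but the intersection argument is cleaner and suffices for the universal property, so I would prefer it unless an explicit description of $\langle S \rangle$ is needed later.
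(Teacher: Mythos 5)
Your proof is correct, but it takes a genuinely different route from the paper's. You define $\langle S \rangle$ top-down as the intersection of all ideals containing $S$ (noting that $A$ itself is such an ideal, so the family is non-empty), and the universal property and uniqueness then fall out immediately; the verification that an arbitrary intersection of ideals is an ideal is indeed routine because every closure condition in the definition of an ideal in $\mathbf{D}$ is stated elementwise. The paper instead builds $\langle S \rangle$ bottom-up: it defines $S^1$ as the span of all parenthesized monomials with at least one factor in $S$, then inductively $S^{n+1}$ as the span of parenthesized monomials with at least one factor in $S^n \cup \alpha(S^n) \cup R(S^n)$, and sets $\langle S \rangle = \bigcup_{n \geq 1} S^n$; it then checks directly that this union is an ideal and is contained in every ideal containing $S$. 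What the paper's construction buys is exactly the explicit filtration you flagged as a possible later need: in the proof of Theorem \ref{thm:G} the paper shows that a morphism $f$ into a Rota--Baxter $G$-Hom-associative algebra kills $\langle S \rangle$ by inducting on $n$ to prove $f(S^n) = 0$. With your definition that step would instead be handled by observing that $\ker f$ is an ideal containing $S$ (a fact the paper records just before the proposition) and invoking property (2) directly --- arguably cleaner --- so your approach is not merely adequate but would slightly streamline the downstream argument.
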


The ideal $\langle S \rangle$ is called the \emph{ideal generated by $S$}.

\begin{proof}
Given elements $x_1, \ldots , x_n \in A$, by a \emph{parenthesized monomial}
\[
x_1 \cdots x_n \in A,
\]
we mean any possible way of multiplying the $x_i$'s using $\mu$ in the prescribed order.  For example, there is only one parenthesized monomial $x_1x_2$.  There are two parenthesized monomials $x_1x_2x_3$, namely, $(x_1x_2)x_3$ and $x_1(x_2x_3)$.

We construct an infinite increasing sequence of submodules of $A$,
   \begin{equation}
   \label{eq:sequence}
   S \subseteq S^1 \subseteq S^2 \subseteq \cdots \subseteq A,
   \end{equation}
as follows.  Define
   \[
   S^1 = span_\bbK\lbrace x_1 \cdots x_m \in A \colon m \geq 1, \text{at least one $x_j$ in $S$} \rbrace.
   \]
In other words, $S^1$ is the submodule of $A$ generated by all the parenthesized monomials in $A$ with at least one entry in $S$.  Inductively, suppose that the submodules
   \[
   S \subseteq S^1 \subseteq \cdots \subseteq S^n
   \]
have been defined for some $n \geq 1$.  Then we set
   \[
   S^{n+1} = span_\bbK\lbrace x_1 \cdots x_m \in A \colon m \geq 1, \text{at least one $x_j$ in $S^n \cup \alpha(S^n) \cup R(S^n)$}\rbrace.
   \]
In other words, $S^{n+1}$ is the submodule of $A$ generated by all the parenthesized monomials in $A$ with at least one entry in $S^n \cup \alpha(S^n) \cup R(S^n)$.

Now that we have an increasing sequence of submodules as in \eqref{eq:sequence}, we define
   \[
   \langle S \rangle = \bigcup_{n \geq 1} S^n.
   \]
It is clear that $\langle S \rangle$ is a submodule of $A$ that contains $S$.  To see that $\langle S \rangle$ is an ideal in $A$, pick elements $x \in \langle S \rangle$ and $y \in A$.  Then $x \in S^n$ for some finite $n$.  Therefore, both $\alpha(x)$ and $R(x)$ lie in $S^{n+1} \subseteq \langle S \rangle$.  Likewise, both $xy$ and $yx$ lie in $S^{n+1} \subseteq \langle S \rangle$.  This shows that $\langle S \rangle$ is an ideal in $A$ that contains $S$.

Suppose that $I$ is another ideal in $A$ that contains $S$.  Since $I$ is closed under multiplication with elements in $A$ on either side, $I$ also contains $S^1$.  Inductively, if $I$ contains $S^n$, then it also contains $\alpha(S^n)$ and $R(S^n)$ because $I$ is closed under both $\alpha$ and $R$.  It follows that $I$ contains $S^{n+1}$, again because $I$ is closed under multiplication with elements in $A$ on either side.  By induction, $I$ must contain $\langle S \rangle$ as well.

This proves the existence of the ideal $\langle S \rangle$.  The uniqueness of $\langle S \rangle$ is obvious from its two properties in the statement of the Proposition.
\end{proof}

\begin{proof}[Proof of Theorem \ref{thm:G}]
Pick an object $(A,\mu,\alpha,R) \in \mathbf{D}$.  Let $S$ be the subset of $A$ consisting of the generating relations in a typical Rota-Baxter $G$-Hom-associative algebra, i.e., the elements
\[
\sum_{\sigma\in G} (-1)^{\varepsilon ({\sigma})} \alphaass \circ \sigma (x,y,z)
\]
and
\[
R(x)R(y) - R(R(x)y + xR(y) + \lambda xy)
\]
for all $x,y,z \in A$.  Then we set
   \begin{equation}
   \label{eq:G}
   \cF_2(A) = A/\langle S \rangle,
   \end{equation}
the quotient of $A$ by the ideal $\langle S \rangle$ generated by $S$.  The quotient $\cF_2(A)$ is an object in $\mathbf{D}$ with the induced operations from $A$.  Moreover, it follows immediately from its construction that $\cF_2(A)$ is an object in the subcategory $\HomRB_\lambda$.  The construction of $\cF_2$ is clearly functorial, so we have a functor $\cF_2 \colon \mathbf{D} \to \HomRB_\lambda$.

To show that $\cF_2$ is the left adjoint of $\cO_2$, pick an object $B \in \HomRB_\lambda$.  Let $f \colon A \to B \in \mathbf{D}$ be a morphism.  We must show that there exists a unique morphism
\[
\zeta \colon \cF_2(A) \to B \in \HomRB_\lambda
\]
such that
\[
\zeta \circ p = f,
\]
where $p \colon A \to \cF_2(A)$ is the projection map.  In other words, we must show that $f$ factors through the quotient $A/\langle S \rangle$.  It suffices to show that $\langle S \rangle$ is contained in the kernel of $f$.  Since $\langle S \rangle$ is defined as the union $\cup_{n \geq 1} S^n$, it is enough to show that
\[
f(S^n) = 0
\]
for all $n \geq 1$.

It is clear that $f(S) = 0$, since $B \in \HomRB_\lambda$ and $f$ commutes with $\mu$, $\alpha$, and $R$.  This implies that $f(S^1) = 0$.  Inductively, suppose that $f(S^n) = 0$.  Then we have
\[
f(\alpha(S^n)) = \alpha(f(S^n)) = 0
\]
and, similarly, $f(R(S^n)) = 0$.  This implies that $f(S^{n+1}) = 0$.  This finishes the induction and shows that $f(S^n) = 0$ for all $n \geq 1$, as desired.
\end{proof}


\section{Hom-(tri)dendriform algebras}\label{sect4}

The purpose of this section is to study Hom-(tri)dendriform algebras.  We discuss some construction results for Hom-(tri)dendriform algebras and observe that Hom-dendriform algebras give rise to Hom-associative and Hom-preLie algebras. Free Hom-(tri)dendriform algebras are also discussed.

\subsection{Hom-dendriform algebras}
Dendriform algebras were introduced by Loday in \cite{Loday1}. Dendriform algebras are algebras with two operations, which dichotomize the notion of associative algebra.   We now generalize this structure by twisting the identities by a linear map.

\begin{definition} \label{def:HomDendr}
A \emph{Hom-dendriform algebra} is a quadruple
\[
(A, \prec,\succ, \alpha)
\]
consisting of a $\K$-module $A$ and linear maps $\prec, \succ \colon A\otimes A \rightarrow A$
and $\alpha\colon A \rightarrow A$ that satisfy the identities
\begin{eqnarray}\label{HomDendriCondition1}  (x\prec y)\prec \alpha (z)&=& \alpha(x)\prec(y\prec z +y\succ z),
 \\ \label{HomDendriCondition2} (x\succ y)\prec \alpha (z)&=&\alpha(x)\succ(y\prec z),\\
\label{HomDendriCondition3} (x\prec y+x\succ y)\succ \alpha (z)&=&\alpha(x)\succ(y\succ z)
\end{eqnarray}
for  $x, y, z$ in $A$.  A Hom-dendriform algebra is \emph{multiplicative} if $\alpha \circ \mu = \mu \circ \alpha^{\otimes 2}$ for $\mu = ~\prec$ and $\succ$.
\end{definition}

Let $(A, \prec,\succ, \alpha) $ and
$(A', \prec',\succ', \alpha') $ be two Hom-dendriform algebras. A \emph{morphism}
$f \colon A\rightarrow A'$ of Hom-dendriform algebras is a linear map such that
$$
 \prec'\circ~ (f\otimes f)=f ~\circ  \prec, \quad
   \succ'\circ~ (f\otimes f)= f ~\circ  \succ, \quad \text{
and } \quad f\circ \alpha=\alpha^{\prime }\circ f.
$$
The category of Hom-dendriform algebras is denoted by $\Homdidend$.

A dendriform algebra  is a Hom-dendriform algebra with $\alpha = \Id$.  In particular, dendriform algebras form a full-subcategory of $\Homdidend$.

The following results are the analogs of the construction results, Theorem \ref{thm:firsttwist} - Corollary \ref{cor1:twist}, for Hom-dendriform algebras.

\begin{theorem}
\label{thm:homdendtwist}
Let $(A, \prec,\succ, \alpha)$ be a Hom-dendriform algebra and $\beta \colon A \to A$ be a morphism.  Then
\[
A_\beta = (A, \prec_\beta ~= \beta ~\circ \prec, \succ_\beta ~= \beta ~\circ \succ, \beta\alpha)
\]
is also a Hom-dendriform algebra, which is multiplicative if $A$ is.

Moreover, suppose $(A', \prec', \succ', \alpha')$ is a Hom-dendriform algebra, $\beta' \colon A' \to A'$ is a morphism, and $f \colon A \to A'$ is a morphism such that $f\beta = \beta'f$.  Then
\[
f \colon A_\beta \to A'_{\beta'}
\]
is a morphism.
\end{theorem}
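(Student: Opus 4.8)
The plan is to verify directly that the three Hom-dendriform identities \eqref{HomDendriCondition1}--\eqref{HomDendriCondition3} hold for the twisted operations $\prec_\beta$ and $\succ_\beta$, exactly paralleling the proof of Theorem \ref{thm:firsttwist}. The key observation is that, since $\beta$ is a morphism of the Hom-dendriform algebra $A$, it commutes with $\alpha$ and satisfies $\beta \circ \prec = \prec \circ \beta^{\otimes 2}$ and $\beta \circ \succ = \succ \circ \beta^{\otimes 2}$; these relations let me pull every occurrence of $\beta$ out to the front of any composite expression, so that each twisted identity becomes $\beta^2$ applied to the corresponding untwisted identity in $A$.

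Concretely, for the first identity I would compute both sides of \eqref{HomDendriCondition1} for $A_\beta$. On the left, $(x \prec_\beta y) \prec_\beta (\beta\alpha)(z) = \beta\bigl(\beta(x \prec y) \prec (\beta\alpha)(z)\bigr)$, and using that $\beta$ is a morphism to move the inner $\beta$ and the $\beta$ inside $\beta\alpha$ outward, this reduces to $\beta^2\bigl((x \prec y) \prec \alpha(z)\bigr)$. On the right, the analogous manipulation gives $\beta^2\bigl(\alpha(x) \prec (y \prec z + y \succ z)\bigr)$. The original identity \eqref{HomDendriCondition1} in $A$ makes these two equal, so the twisted identity holds. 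The same bookkeeping, applied to \eqref{HomDendriCondition2} and \eqref{HomDendriCondition3}, reduces each to $\beta^2$ applied to the corresponding original identity; the mixed operations $\prec$ and $\succ$ appearing together cause no difficulty since $\beta$ is simultaneously a morphism for both.

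For the multiplicativity claim, if $A$ is multiplicative then $\alpha \circ \mu = \mu \circ \alpha^{\otimes 2}$ for $\mu \in \{\prec, \succ\}$, and I must check $(\beta\alpha) \circ \mu_\beta = \mu_\beta \circ (\beta\alpha)^{\otimes 2}$. Expanding $\mu_\beta = \beta\mu$ and using that $\beta$ is a morphism to commute $\beta$ past $\mu$ and past $\alpha$, both sides collapse to $\beta^2 \alpha \circ \mu = \beta^2 \mu \circ \alpha^{\otimes 2}$, which agree by multiplicativity of $A$. Finally, for the morphism assertion, given $f \colon A \to A'$ with $f\beta = \beta'f$, I would verify $f \circ \prec_\beta = \prec'_{\beta'} \circ f^{\otimes 2}$ and likewise for $\succ$, together with $f \circ (\beta\alpha) = (\beta'\alpha') \circ f$; each follows formally by combining the hypothesis that $f$ is a Hom-dendriform morphism with the intertwining relation $f\beta = \beta'f$.

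I do not expect any genuine obstacle here: the entire argument is the Hom-dendriform analog of the computation already carried out for Theorem \ref{thm:firsttwist}, and the statement itself is flagged in the text as such an analog. The only point requiring mild care is the consistent use of the morphism property of $\beta$ for \emph{both} operations $\prec$ and $\succ$ simultaneously when handling the identities \eqref{HomDendriCondition1} and \eqref{HomDendriCondition3}, where sums of the two operations occur; since a morphism of Hom-dendriform algebras respects each operation separately, linearity makes this automatic.
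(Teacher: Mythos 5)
Your proposal is correct and follows essentially the same route as the paper, which simply observes that each twisted Hom-dendriform axiom is $\beta^2$ applied to the corresponding axiom of $A$ (the paper even remarks, as your computation implicitly confirms, that commutativity of $\beta$ with $\alpha$ is not needed for the axioms themselves, only the morphism property of $\beta$ for both operations). Your explicit verification of \eqref{HomDendriCondition1} and the routine checks of multiplicativity and the morphism assertion match the paper's (abbreviated) argument.
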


\begin{proof}
The Hom-dendriform axioms \eqref{HomDendriCondition1}-\eqref{HomDendriCondition3} for $A_\beta$ are obtained from those of $A$ by applying $\beta^2$.  Notice that this assertion does not use the commutativity of $\beta$ with $\alpha$.  The assertions about multiplicativity and the morphism $f$ are obvious.
\end{proof}

The following two results are special cases of Theorem \ref{thm:homdendtwist}.

\begin{corollary}
Let $(A, \prec,\succ, \alpha)$ be a multiplicative Hom-dendriform algebra.  Then
\[
A_{\alpha^n} = \left(A, \alpha^n ~\circ \prec, \alpha^n ~\circ \succ, \alpha^{n+1}\right)
\]
is also a multiplicative Hom-dendriform algebra for each $n \geq 1$.
\end{corollary}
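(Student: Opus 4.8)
The plan is to obtain this as the $\beta = \alpha^n$ special case of Theorem~\ref{thm:homdendtwist}, in exact parallel with the corollary to Theorem~\ref{thm:firsttwist} in the Rota-Baxter setting. The only point requiring attention before invoking that theorem is that $\alpha^n$ must be a morphism of the Hom-dendriform algebra $A$ for each $n \geq 1$, since Theorem~\ref{thm:homdendtwist} takes as input a \emph{morphism} $\beta$.

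First I would unwind what multiplicativity of $A$ provides. By definition it says precisely that
\[
\alpha \circ {\prec} = {\prec} \circ \alpha^{\otimes 2} \andspace \alpha \circ {\succ} = {\succ} \circ \alpha^{\otimes 2}.
\]
Together with the trivial identity $\alpha \circ \alpha = \alpha \circ \alpha$, these are exactly the three conditions defining a morphism of Hom-dendriform algebras whose source and target are both $A$. Hence $\alpha$ is an endomorphism of $A$ in $\Homdidend$. Since a composite of morphisms is again a morphism, $\alpha^n$ is a morphism of $A$ for every $n \geq 1$.

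Next I would apply Theorem~\ref{thm:homdendtwist} with $\beta = \alpha^n$. The theorem yields the Hom-dendriform algebra
\[
A_{\alpha^n} = \left(A, \alpha^n \circ {\prec}, \alpha^n \circ {\succ}, \alpha^n\alpha\right),
\]
and since $\alpha^n\alpha = \alpha^{n+1}$ this coincides with the quadruple in the statement. Because $A$ is multiplicative, the multiplicativity clause of Theorem~\ref{thm:homdendtwist} ensures that $A_{\alpha^n}$ is multiplicative as well.

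There is essentially no obstacle: all of the content is absorbed into Theorem~\ref{thm:homdendtwist}, and the sole verification---that multiplicativity forces $\alpha$, and therefore each power $\alpha^n$, to be a self-morphism---is immediate from the definition of a Hom-dendriform morphism. The only thing worth recording explicitly is the elementary observation that composites of morphisms are morphisms, which is what licenses $\alpha^n$ to play the role of the twisting morphism $\beta$.
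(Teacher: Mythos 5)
Your proposal is correct and follows exactly the paper's argument: the corollary is obtained as the $\beta=\alpha^n$ special case of Theorem~\ref{thm:homdendtwist}. The extra remark that multiplicativity is precisely what makes $\alpha$ (hence $\alpha^n$) a self-morphism in $\Homdidend$ is a worthwhile explicit check that the paper leaves implicit, but it does not change the route.
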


\begin{proof}
This is the $\beta = \alpha^n$ special case of Theorem \ref{thm:homdendtwist}.
\end{proof}

\begin{corollary}\label{ theoremConstrHomDend}
Let $(A,\prec, \succ)$ be a dendriform  algebra and $\beta \colon  A\rightarrow A$ be a morphism.  Then
\[
A_\beta = (A, \beta ~\circ \prec, \beta ~\circ \succ, \beta)
\]
is a multiplicative Hom-dendriform algebra.
\end{corollary}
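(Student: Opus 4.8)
The plan is to recognize this corollary as the $\alpha = \Id$ specialization of Theorem \ref{thm:homdendtwist}, exactly parallel to the way Corollary \ref{cor1:twist} is deduced from Theorem \ref{thm:firsttwist} in the Rota-Baxter setting. As already observed in the excerpt, a dendriform algebra $(A, \prec, \succ)$ is nothing but a Hom-dendriform algebra $(A, \prec, \succ, \Id)$ whose twisting map is the identity. So the first step is simply to regard the given dendriform algebra in this way.

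Next I would apply Theorem \ref{thm:homdendtwist} to this Hom-dendriform algebra together with the morphism $\beta$. The theorem outputs $A_\beta = (A, \beta \circ \prec, \beta \circ \succ, \beta \circ \Id)$, and since $\beta \circ \Id = \beta$, this coincides with the quadruple $(A, \beta \circ \prec, \beta \circ \succ, \beta)$ in the statement; hence $A_\beta$ is a Hom-dendriform algebra. For the multiplicativity assertion, I would note that the source $(A, \prec, \succ, \Id)$ is itself multiplicative in the Hom sense: the defining condition $\Id \circ \mu = \mu \circ \Id^{\otimes 2}$ for $\mu = \prec, \succ$ holds trivially. Theorem \ref{thm:homdendtwist} then yields that $A_\beta$ is multiplicative as well.

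I do not anticipate any genuine obstacle, since the statement is a direct specialization. The one point worth flagging is that no commutation hypothesis relating $\beta$ and the twisting map is needed here; indeed, the proof of Theorem \ref{thm:homdendtwist} explicitly remarks that its conclusion does not use the commutativity of $\beta$ with $\alpha$, so the $\alpha = \Id$ case goes through without any extra assumption on $\beta$.
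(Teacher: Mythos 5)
Your proposal is correct and matches the paper's own proof, which simply observes that this is the $\alpha = \Id$ special case of Theorem \ref{thm:homdendtwist}. The extra details you supply (that $(A,\prec,\succ,\Id)$ is trivially multiplicative and that no commutation hypothesis on $\beta$ is needed) are accurate and consistent with the paper's remarks.
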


\begin{proof}
This is the $\alpha = \Id$ special case of Theorem \ref{thm:homdendtwist}.
\end{proof}

We now show that Hom-dendriform algebra structures lead to  Hom-associative algebra structures, hence to $G$-Hom-associative algebra structures.  We provide also  a connection to Hom-preLie algebras.

\begin{theorem}
\label{hdtoha}
Let $(A, \prec,\succ, \alpha) $   be a Hom-dendriform algebra. Define a linear map $\star\colon A\otimes A\rightarrow A$ by
\begin{equation}\label{DendToAss}
x \star y=x\prec y+ x \succ y
\end{equation}
for $x,y \in A$.  Then
\[
A_a = (A,\star, \alpha)
\]
is a Hom-associative algebra, hence also a $G$-Hom-associative algebra for every subgroup $G$ of $\mathcal{S}_3$.
\end{theorem}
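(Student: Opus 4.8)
The plan is to verify directly that the Hom-associator of $\star$ vanishes, i.e.\ that $\alphaass^\star(x,y,z) = 0$ for all $x,y,z \in A$, where the superscript indicates that the associator is formed using $\star$. Once Hom-associativity is established, the final assertion that $A_a$ is also a $G$-Hom-associative algebra for every subgroup $G$ of $\Sthree$ is immediate from the Remark following the definition of the subclasses, since a Hom-associative algebra satisfies $\alphaass^\star = 0$ and hence \eqref{admi} for every $G$.

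First I would expand $\alphaass^\star(x,y,z) = (x\star y)\star \alpha(z) - \alpha(x)\star(y\star z)$ using the definition \eqref{DendToAss} $x\star y = x\prec y + x\succ y$. Expanding both $\star$ products, the left term $(x\star y)\star\alpha(z)$ becomes a sum of four terms of the form $(x\,\square_1\,y)\,\square_2\,\alpha(z)$ with $\square_i\in\{\prec,\succ\}$, and similarly $\alpha(x)\star(y\star z)$ splits into four terms $\alpha(x)\,\square_1\,(y\,\square_2\,z)$. Thus $\alphaass^\star(x,y,z)$ is an alternating sum of eight triple-products. The key step is then to regroup these eight terms according to the three Hom-dendriform axioms \eqref{HomDendriCondition1}, \eqref{HomDendriCondition2}, \eqref{HomDendriCondition3}: the axioms precisely equate each left-normed expression $(\cdots)\,\square\,\alpha(z)$ with a corresponding right-normed expression $\alpha(x)\,\square\,(\cdots)$. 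Summing the three axioms gives
\[
(x\prec y)\prec\alpha(z) + (x\succ y)\prec\alpha(z) + (x\prec y)\succ\alpha(z) + (x\succ y)\succ\alpha(z)
\]
on the left-hand side, which is exactly $(x\star y)\star\alpha(z)$, and the right-hand sides assemble into $\alpha(x)\prec(y\star z) + \alpha(x)\succ(y\star z) = \alpha(x)\star(y\star z)$. Hence the difference is zero.

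I do not expect any real obstacle here; the proof is essentially a bookkeeping exercise showing that the three dendriform identities are designed so that their sum is precisely the Hom-associativity identity \eqref{HomAssCond} for $\star$. The only point requiring minor care is matching the $\alpha$-placement: each dendriform axiom already carries $\alpha$ on the outer argument ($\alpha(z)$ on the left, $\alpha(x)$ on the right) in exactly the positions demanded by the Hom-associator \eqref{Hom-associator}, so no additional twisting or multiplicativity hypothesis on $\alpha$ is needed. After establishing $\alphaass^\star = 0$, I would simply invoke the earlier observation that a Hom-associative algebra is a $G$-Hom-associative algebra for every subgroup $G$ of $\Sthree$ to conclude.
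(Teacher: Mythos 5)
Your proposal is correct and is essentially identical to the paper's own proof: both argue that summing the three Hom-dendriform axioms \eqref{HomDendriCondition1}--\eqref{HomDendriCondition3} yields $(x\star y)\star\alpha(z)$ on the left and $\alpha(x)\star(y\star z)$ on the right, which is exactly the Hom-associativity of $\star$. The final deduction that $A_a$ is $G$-Hom-associative for every subgroup $G$ of $\Sthree$ is handled the same way in both arguments.
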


\begin{proof}
To check the Hom-associative identity \eqref{HomAssCond}, observe that for $x,y,z\in A$, the sum of the left-hand sides of the Hom-dendriform axioms \eqref{HomDendriCondition1} - \eqref{HomDendriCondition3} is
\[
(x\star y)\star \alpha (z).
\]
Likewise, the sum of the right-hand sides of the axioms \eqref{HomDendriCondition1} - \eqref{HomDendriCondition3} is
\[
\alpha (x)\star (y\star z).
\]
Therefore, the sum of the three Hom-dendriform axioms says
\[
(x\star y)\star \alpha (z) = \alpha (x)\star (y\star z),
\]
which is the Hom-associative identity for $A_a$.
\end{proof}

The following result says that a Hom-dendriform algebra gives rise to a left Hom-preLie algebra via a mixed commutator.

\begin{theorem}\label{HDendtoHpLie}
Let $(A, \prec,\succ, \alpha)$ be a Hom-dendriform algebra.  Define the linear map $\rhd\colon A\otimes A\rightarrow A$ by
\[
x\rhd y=x\succ y- y \prec x
\]
for $x,y\in A$.  Then
\[
A_l = (A,\rhd, \alpha)
\]
is a left Hom-preLie algebra.
\end{theorem}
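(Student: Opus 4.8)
The plan is to verify the defining identity \eqref{HomPreLieCond} of a left Hom-preLie algebra for $A_l = (A, \rhd, \alpha)$, which is equivalent to saying that the Hom-associator $\assal$ of $\rhd$ is symmetric in its first two arguments, i.e. $\assal(x,y,z) = \assal(y,x,z)$ for all $x,y,z \in A$. Writing $\assal(x,y,z) = (x\rhd y)\rhd\alpha(z) - \alpha(x)\rhd(y\rhd z)$ and substituting $x\rhd y = x\succ y - y\prec x$ everywhere, I would expand $\assal(x,y,z)$ into an alternating sum of eight doubly nested terms $T_1,\dots,T_8$ built from $\prec$ and $\succ$ (for example $T_1 = (x\succ y)\succ\alpha(z)$, $T_4 = \alpha(z)\prec(y\prec x)$, $T_7 = (y\succ z)\prec\alpha(x)$), and record the eight terms $T_i'$ of $\assal(y,x,z)$ obtained by interchanging $x$ and $y$. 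The goal is to show that $D = \assal(x,y,z) - \assal(y,x,z)$ vanishes.

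Next I would apply each of the three Hom-dendriform axioms exactly once to obtain three reduction relations expressing certain $T_i$ through the swapped terms $T_j'$. Axiom \eqref{HomDendriCondition3} applied to $(x,y,z)$ rewrites the left-nested term $T_1 = (x\succ y)\succ\alpha(z)$ as $\alpha(x)\succ(y\succ z) - (x\prec y)\succ\alpha(z) = T_5 - T_2'$. Axiom \eqref{HomDendriCondition1} applied to $(z,y,x)$ expands the left-nested term $T_8 = (z\prec y)\prec\alpha(x)$ as $\alpha(z)\prec(y\prec x) + \alpha(z)\prec(y\succ x) = T_4 + T_3'$. Finally, axiom \eqref{HomDendriCondition2} shows that the mixed term $T_7 = (y\succ z)\prec\alpha(x)$ equals $\alpha(y)\succ(z\prec x)$, which is exactly the $x\leftrightarrow y$ swap of the other mixed term $T_6 = \alpha(x)\succ(z\prec y)$, so $T_7 = T_6'$.

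Substituting $T_1 = T_5 - T_2'$, $T_8 = T_4 + T_3'$, and $T_7 = T_6'$ into $\assal(x,y,z)$, the right-nested term $T_5 = \alpha(x)\succ(y\succ z)$ and the right-nested term $T_4 = \alpha(z)\prec(y\prec x)$ each cancel against a piece produced by the reductions, leaving the combination $-(T_2 + T_2') - (T_3 + T_3') + (T_6 + T_6')$. Since every summand here is visibly invariant under $x\leftrightarrow y$, the reduced expression for $\assal(x,y,z)$ coincides with that for $\assal(y,x,z)$, so $D = 0$ and $A_l$ is a left Hom-preLie algebra. The computation is routine but sign- and bookkeeping-heavy; the main obstacle, and the step to carry out most carefully, is choosing for each doubly nested term the one axiom whose application leaves behind swap-symmetric remainders, so that the final cancellation is transparent rather than requiring a second round of reductions.
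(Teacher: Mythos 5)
Your proposal is correct and follows essentially the same route as the paper's proof: expand the Hom-associator of $\rhd$ into the eight $\prec/\succ$ terms, apply axiom \eqref{HomDendriCondition3} to $(x\succ y)\succ\alpha(z)$, axiom \eqref{HomDendriCondition1} to $(z\prec y)\prec\alpha(z)$'s counterpart $(z\prec y)\prec\alpha(x)$, and axiom \eqref{HomDendriCondition2} to match the two mixed terms, leaving the visibly $x\leftrightarrow y$-symmetric remainder $-(x\prec y+y\prec x)\succ\alpha(z)-\alpha(z)\prec(x\succ y+y\succ x)+(x\succ z)\prec\alpha(y)+(y\succ z)\prec\alpha(x)$. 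The only difference from the paper is cosmetic bookkeeping (you rewrite $T_1$ via axiom \eqref{HomDendriCondition3} where the paper rewrites $T_5$, which is the same identity read in the other direction).
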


\begin{proof}
To check the left Hom-preLie identity \eqref{HomPreLieCond}, observe that for $x,y\in A$ we have:
\begin{equation}
\label{rhd1}
\begin{split}
(x\rhd y)\rhd \alpha (z)
&=(x\succ y-y\prec x)\rhd \alpha (z)\\
&= (x\succ y)\succ \alpha (z)-(y\prec x)\succ \alpha (z)\\
&\relphantom{} -\alpha (z)\prec (x\succ y)+\alpha (z)\prec (y\prec x)
\end{split}
\end{equation}
and
\begin{equation}
\label{rhd2}
\begin{split}
\alpha (x)\rhd (y\rhd z)&=\alpha (x)\rhd (y\succ z-z\prec y)\\
&= \alpha (x)\succ (y\succ z)-\alpha (x)\succ (z\prec y)\\
&\relphantom{} -(y\succ z)\prec \alpha (x)+(z\prec y)\prec \alpha (x)\\
&= (x\prec y)\succ \alpha (z)+(x\succ y)\succ \alpha (z)
-\alpha (x)\succ (z\prec y)\\
&\relphantom{} -(y\succ z)\prec \alpha (x)+
 \alpha(z)\prec (y\prec x)+\alpha(z)\prec (y\succ x).
\end{split}
\end{equation}
In the last equality above, we used the Hom-dendriform axioms \eqref{HomDendriCondition1} and \eqref{HomDendriCondition3}. Using \eqref{rhd1}, \eqref{rhd2}, and the remaining Hom-dendriform axiom \eqref{HomDendriCondition2}, the Hom-associator of $A_l$ becomes
\[
\begin{split}
\assal(x,y,z)
&= (x \rhd y) \rhd \alpha(z) - \alpha(x) \rhd (y \rhd z)\\
&= - (x \prec y + y \prec x) \succ \alpha(z)\\
&\relphantom{} - \alpha(z) \prec (x \succ y + y \succ x)\\
&\relphantom{} + (x \succ z) \prec \alpha(y) + (y \succ z) \prec \alpha(x).
\end{split}
\]
The last expression is symmetric in the variables $x$ and $y$, so $A_l$ is a left Hom-preLie algebra.
\end{proof}

The next result is the right Hom-preLie version of the previous result.

\begin{theorem}
\label{hdtorhl}
Let $(A, \prec,\succ, \alpha)$ be a Hom-dendriform algebra.  Define the linear map $\lhd\colon A\otimes A\rightarrow A$ by
\[
x\lhd y=x\prec y- y \succ x
\]
for $x,y\in A$.  Then
\[
A_r = (A,\lhd, \alpha)
\]
is a right Hom-preLie algebra.
\end{theorem}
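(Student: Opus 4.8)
The plan is to deduce Theorem \ref{hdtorhl} from Theorem \ref{HDendtoHpLie} by passing to the opposite Hom-dendriform algebra, rather than repeating the computation of the previous proof. Given the Hom-dendriform algebra $(A,\prec,\succ,\alpha)$, I would first introduce the \emph{opposite} operations
\[
x \prec' y = y \succ x, \qquad x \succ' y = y \prec x,
\]
and claim that $(A,\prec',\succ',\alpha)$ is again a Hom-dendriform algebra. The verification is a bookkeeping match: substituting the definitions of $\prec'$ and $\succ'$ into the three Hom-dendriform axioms \eqref{HomDendriCondition1}--\eqref{HomDendriCondition3} written for the primed operations turns the primed \eqref{HomDendriCondition1} into the original \eqref{HomDendriCondition3} (with the variables $x$ and $z$ interchanged), the primed \eqref{HomDendriCondition2} into the original \eqref{HomDendriCondition2} (likewise relabeled), and the primed \eqref{HomDendriCondition3} into the original \eqref{HomDendriCondition1}. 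Thus each primed axiom is just a relabeled instance of an original axiom, so the opposite is indeed Hom-dendriform. This matching is the one step that requires genuine care.

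Next I would apply Theorem \ref{HDendtoHpLie} to $(A,\prec',\succ',\alpha)$. That theorem produces a left Hom-preLie algebra $(A,\rhd',\alpha)$ whose product is
\[
x \rhd' y = x \succ' y - y \prec' x = y \prec x - x \succ y.
\]
Comparing with the operation $\lhd$ of the statement, $x \lhd y = x \prec y - y \succ x$, one reads off
\[
x \rhd' y = y \lhd x,
\]
so $\lhd$ is exactly the opposite operation of the left Hom-preLie product $\rhd'$, and the twisting map $\alpha$ is unchanged.

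Finally, I would invoke the observation recorded in the Remark following the list of $G$-Hom-associative subclasses, namely that a left Hom-preLie algebra is the opposite of a right Hom-preLie algebra; since taking opposites is an involution that leaves $\alpha$ fixed, the opposite of a left Hom-preLie algebra is right Hom-preLie. As $(A,\rhd',\alpha)$ is left Hom-preLie and $\lhd$ is its opposite product, it follows that $A_r = (A,\lhd,\alpha)$ is a right Hom-preLie algebra, which is the assertion of Theorem \ref{hdtorhl}.

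An alternative would be to mimic the proof of Theorem \ref{HDendtoHpLie} directly: expand $(x \lhd y)\lhd \alpha(z)$ and $\alpha(x)\lhd(y\lhd z)$ using the Hom-dendriform axioms and verify that the Hom-associator $\mathfrak{as}_{A_r}(x,y,z)$ is symmetric in the last two variables $y$ and $z$. The main obstacle on that route is precisely the same as in the previous proof---tracking which of the three axioms cancels which collection of terms---whereas the opposite-algebra argument compresses all of that bookkeeping into the single axiom-matching step of the first paragraph, which is where I expect the real work to lie.
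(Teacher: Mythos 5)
Your proposal is correct, but it takes a genuinely different route from the paper, which simply performs (and omits as "essentially the same") the direct Hom-associator computation analogous to the proof of Theorem \ref{HDendtoHpLie}. Your reduction via the opposite algebra checks out: with $x \prec' y = y \succ x$ and $x \succ' y = y \prec x$, the primed axiom \eqref{HomDendriCondition1} is the original \eqref{HomDendriCondition3} with $x$ and $z$ interchanged, the primed \eqref{HomDendriCondition2} is \eqref{HomDendriCondition2} relabeled, and the primed \eqref{HomDendriCondition3} is \eqref{HomDendriCondition1} relabeled, so the opposite of a Hom-dendriform algebra is Hom-dendriform; then $x \rhd' y = y \prec x - x \succ y = y \lhd x$, and since the Hom-associators of a product and its opposite satisfy $\mathfrak{as}^{op}(x,y,z) = -\mathfrak{as}(z,y,x)$, symmetry of $\mathfrak{as}$ in the first two arguments (left Hom-preLie) is equivalent to symmetry of $\mathfrak{as}^{op}$ in the last two (right Hom-preLie), exactly as the Remark after the list of $G_i$-classes asserts. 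What your argument buys is a structural explanation with no term-by-term cancellation, plus a small lemma of independent interest (closure of Hom-dendriform algebras under taking opposites) that the paper never states; what the paper's direct computation buys is self-containedness and an explicit symmetric expression for the Hom-associator of $A_r$. One presentational suggestion: if you use this route, state the sign identity $\mathfrak{as}^{op}(x,y,z) = -\mathfrak{as}(z,y,x)$ explicitly, since the cited Remark records the left/right opposite relationship without proof.
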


\begin{proof}
The proof is essentially the same as that of Theorem \ref{HDendtoHpLie}, so we will omit the details.
\end{proof}

\begin{remark}
Recall that $G$-Hom-associative algebras are automatically Hom-Lie-admissible algebras (Proposition \ref{ThmAdmi}).  In particular, given a Hom-dendriform algebra $A$, the Hom-associative algebra $A_a$ (Theorem \ref{hdtoha}), the left Hom-preLie algebra $A_l$ (Theorem \ref{HDendtoHpLie}), and the right Hom-preLie algebra $A_r$ (Theorem \ref{hdtorhl}) are all Hom-Lie-admissible algebras.  In fact, direct computation shows that their commutator Hom-Lie algebras (Remark \ref{commutatorhomlie}) are equal.
\end{remark}


\subsection{ Free Hom-dendriform algebras}

Next we discuss the free Hom-dendriform algebra associated to a Hom-module.

\begin{theorem}
\label{thm:freehd}
The forgetful functor
\[
\cO \colon \Homdidend \to \HomMod
\]
given by
\[
\cO(A, \prec, \succ, \alpha) = (A,\alpha)
\]
admits a left adjoint.
\end{theorem}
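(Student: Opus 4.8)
The plan is to mimic almost verbatim the two-stage construction used for the free Rota-Baxter $G$-Hom-associative algebra in Theorem \ref{thm:freehrb}. I would factor the forgetful functor $\cO \colon \Homdidend \to \HomMod$ through an intermediate category $\bE$ whose objects are quadruples $(A, \prec, \succ, \alpha)$ consisting of a $\K$-module $A$, two bilinear operations $\prec, \succ \colon A^{\otimes 2} \to A$, and a single linear self-map $\alpha$, with \emph{no} axioms imposed; morphisms commute with $\prec$, $\succ$, and $\alpha$. Thus $\cO$ factors as $\Homdidend \xrightarrow{\cO_2} \bE \xrightarrow{\cO_1} \HomMod$, where $\cO_1(A, \prec, \succ, \alpha) = (A, \alpha)$ forgets the two products and $\cO_2$ is the full, faithful inclusion of the subcategory cut out by the Hom-dendriform axioms \eqref{HomDendriCondition1}--\eqref{HomDendriCondition3}. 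It then suffices to produce left adjoints $\cF_1$ and $\cF_2$ to $\cO_1$ and $\cO_2$ respectively; the composite $\cF_2 \circ \cF_1$ is the desired left adjoint.

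For the free functor $\cF_1 \colon \HomMod \to \bE$, I would adapt the decorated-tree bookkeeping of Section \ref{subsec:decorated}. The essential difference from the Rota-Baxter case is that we must freely generate \emph{two} distinct binary products rather than one product together with an operator $R$. I would therefore replace the planar binary trees $T_n$ by planar binary trees whose internal vertices are \emph{labelled} by one of the two symbols $\prec$ or $\succ$ (equivalently, bi-coloured internal vertices), and decorate only the nodes by powers of $\alpha$ — so each node's decoration is a single non-negative integer generating $\alpha^{a}$, since here there is no $R$ to interleave and the decoration sequences collapse to length one. Grafting now comes in two flavours, $\vee_\prec$ and $\vee_\succ$, according to the label assigned to the new lowest internal vertex, and the operation $\alpha$ on decorated trees simply increments the decoration of $v_{low}$. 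Setting $\cF_1(A) = \bigoplus_{n \geq 1,\ \tau} A^{\otimes n}_\tau$ over these $\alpha$-decorated bi-labelled trees, and defining $\prec$, $\succ$, $\alpha$ on generators exactly as in \eqref{eq:muF}--\eqref{eq:alphaF}, the universal property is verified by the same induction: every decorated tree is built uniquely from copies of the trivial $1$-tree by $\vee_\prec$, $\vee_\succ$, and $\alpha$, so a Hom-module map $f \colon A \to B$ extends uniquely along these operations to a morphism in $\bE$.

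For $\cF_2 \colon \bE \to \Homdidend$, I would reuse Proposition \ref{prop:ideal} essentially unchanged, after recording that the notion of an ideal in $\bE$ (a submodule closed under $\alpha$ and under left/right multiplication by both $\prec$ and $\succ$) and the construction of the ideal $\langle S \rangle$ generated by a subset go through verbatim, now with parenthesized monomials formed using both products. Given an object $A \in \bE$, I would take $S$ to be the set of all elements
\[
(x \prec y) \prec \alpha(z) - \alpha(x) \prec (y \prec z + y \succ z),
\]
\[
(x \succ y) \prec \alpha(z) - \alpha(x) \succ (y \prec z), \quad
(x \prec y + x \succ y) \succ \alpha(z) - \alpha(x) \succ (y \succ z)
\]
ranging over all $x, y, z \in A$, and set $\cF_2(A) = A/\langle S \rangle$. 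The quotient is a Hom-dendriform algebra by construction, and the adjunction follows from the same argument as in Theorem \ref{thm:G}: a morphism $f \colon A \to B$ into a Hom-dendriform algebra kills $S$ (since $B$ satisfies the axioms and $f$ commutes with all operations), hence kills each $S^n$ by induction using $f(\alpha(S^n)) = \alpha(f(S^n))$ and closure under the two products, and so factors uniquely through the quotient.

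The conceptual content is entirely routine once one has Theorem \ref{thm:freehrb} as a template; the only genuine bookkeeping obstacle is setting up the right tree combinatorics in $\cF_1$, namely getting the bi-labelled, $\alpha$-decorated trees and their uniqueness of presentation correct so that the universal property computation is clean. Everything on the $\cF_2$ side is a direct transcription of Proposition \ref{prop:ideal} and Theorem \ref{thm:G} with the single product $\mu$ replaced by the pair $\prec, \succ$.
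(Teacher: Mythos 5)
Your proposal is correct and follows essentially the same route as the paper's own sketch: the paper also factors $\cO$ through an intermediate category $\bE$ of quadruples $(A,\mu_l,\mu_r,\alpha)$ with no axioms, builds $\cF_1$ from modified decorated trees whose internal vertices carry a decoration in $\bbZ_{\geq 0}\times\{l,r\}$ (your bi-labelled, $\alpha$-power-decorated trees), and obtains $\cF_2$ by quotienting by the ideal generated by the Hom-dendriform relations exactly as in Theorem \ref{thm:G}. The only cosmetic difference is that the paper decorates internal vertices only, leaving leaves undecorated so that $\alpha$ on the $1$-tree component is just $\alpha_A$, which is the cleanest way to make $\iota$ a Hom-module morphism.
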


\begin{proof}[Sketch of proof]
The proof is similar to that of Theorem \ref{thm:freehrb}, so we only give a sketch.  Instead of the category $\bD$ in section \ref{subsec:D}, here we use the intermediate category $\bE$ whose objects are quadruples
\[
(A,\mu_l,\mu_r,\alpha),
\]
in which $A$ is a $\bbK$-module, both $\mu_l$ and $\mu_r$ are bilinear maps on $A$, and $\alpha$ is a linear self-map on $A$.  The forgetful functor $\cO$ factors as the composition of two forgetful functors,
\[
\cO_2 \colon \Homdidend \to \bE \andspace
\cO_1 \colon \bE \to \HomMod,
\]
similar to \eqref{eq:O3}.

The left adjoint
\[
\cF_1 \colon \HomMod \to \bE
\]
of $\cO_1$ is constructed with a suitable modification of the proof of Theorem \ref{thm:freeD}.  We define a \emph{modified decorated $n$-tree} to mean a pair $(\psi,f)$, where $\psi$ is an $n$-tree, and $f$ is a function from the set of internal nodes (and not leaves) of $\psi$ to the product
\[
\bbZ_{\geq 0} \times \{l,r\}.
\]
On these modified decorated trees, the operation $\alpha$ is defined as adding $1$ to the integer component of the decoration of the lowest internal node.  There are two grafting operations $\vee_l$ and $\vee_r$, which assign the decorations $(0,l)$ and $(0,r)$, respectively, to the new lowest internal node.  Similar to \eqref{eq:opB}, each such modified decorated $n$-tree gives a way of multiplying $n$ elements in an object in $\bE$.  At an internal node with decoration $(m,*)$, where $* = l$ or $r$, one uses the multiplication $\mu_*$ to multiply the two elements from the two branches above the node.  Then one applies $\alpha^m$ to the resulting product.  One defines $\cF_1$ as in \eqref{eq:FA} using modified decorated trees and checks that it is the desired left adjoint of $\cO_1$.

For the left adjoint
\[
\cF_2 \colon \bE \to \Homdidend
\]
of $\cO_2$, one modifies the proof of Theorem \ref{thm:G}.  For an object $(A,\mu_l,\mu_r,\alpha) \in \bE$, an \emph{ideal} is a submodule $I$ that is closed under $\alpha$ and such that
\[
\mu_*(I,A) \subseteq I \andspace \mu_*(A,I) \subseteq I
\]
for $* \in \{l,r\}$.  There is an obvious analog of Proposition \ref{prop:ideal} for the existence and uniqueness of an ideal generated by a non-empty subset.  One then defines $\cF_2$ as in \eqref{eq:G}, where $S$ here is the set of generating relations in a typical Hom-dendriform algebra \eqref{HomDendriCondition1} - \eqref{HomDendriCondition3}.
\end{proof}

\subsection{Hom-tridendriform algebras}
The notion of tridendriform algebra was introduced by Loday and Ronco in \cite{Loday-Ronco04}.  A tridendriform algebra is a vector space equipped with 3 binary operations $\prec, \succ, \bullet $ satisfying seven relations. We extend this notion to the Hom situation as follows.

\begin{definition} \label{def:HomTriDendr}
A \emph{Hom-tridendriform algebra} is a quintuple
\[
(A, \prec,\succ, \bullet, \alpha)
\]
consisting of a $\K$-module $A$ and linear maps $\prec, \succ , \bullet\colon A\otimes A \rightarrow A$
and $\alpha\colon A \rightarrow A$ that satisfy the following axioms for all $x,y,z \in A$:
\begin{eqnarray}\label{HomTriDendriCondition1}   (x\prec y)\prec \alpha (z)&=& \alpha(x)\prec(y\prec z +y\succ z+ y \bullet z),
 \\ \label{HomTridDendriCondition2} (x\succ y)\prec \alpha (z)&=&\alpha(x)\succ(y\prec z),\\
\label{HomTriDendriCondition3} \alpha(x)\succ(y\succ z)&=&(x\prec y+x\succ y+x \bullet y)\succ \alpha (z),\\
\label{HomTridDendriCondition4} (x\prec y) \bullet \alpha (z)&=&\alpha(x) \bullet(y\succ z),\\
\label{HomTridDendriCondition5} (x\succ y) \bullet \alpha (z)&=&\alpha(x)\succ(y \bullet z),\\
\label{HomTridDendriCondition6} (x \bullet y)\prec \alpha (z)&=&\alpha(x) \bullet(y\prec z),\\
\label{HomTridDendriCondition7} (x \bullet y) \bullet \alpha (z)&=&\alpha(x) \bullet(y \bullet z).
\end{eqnarray}
A \emph{morphism} of Hom-tridendriform algebras is a morphism of the underlying Hom-modules that is compatible with the three binary operations.  The category of Hom-tridendriform algebras is denoted by $\Homtridend$.  A Hom-tridendriform algebra is \emph{multiplicative} if the twisting map $\alpha$ is a morphism of Hom-tridendriform algebras.  A \emph{tridendriform algebra} is a Hom-tridendriform algebra with $\alpha = \Id$.
\end{definition}

\begin{remark}
Every Hom-tridendriform algebra gives a Hom-dendriform algebra by setting $x \bullet y=0$ for any $x,y\in A$.  Also, the binary operation $\bullet$ satisfies the Hom-associativity identity \eqref{HomAssCond} by the axiom \eqref{HomTridDendriCondition7}.
\end{remark}

Some of the results above for Hom-dendriform algebras have obvious analogs for Hom-tridendriform algebras, whose proofs are slight modifications of those given above.  Therefore, we omit the proofs of the following results.

\begin{theorem}
Let $(A, \prec,\succ, \bullet, \alpha)$ be a Hom-tridendriform algebra and $\beta \colon A \to A$ be a morphism.  Then
\[
A_\beta = (A, \prec_\beta ~= \beta ~\circ \prec, \succ_\beta ~= \beta ~\circ \succ, \bullet_\beta = \beta \circ \bullet,  \beta\alpha)
\]
is also a Hom-tridendriform algebra, which is multiplicative if $A$ is.

Moreover, suppose $(A', \prec', \succ', \bullet', \alpha')$ is a Hom-tridendriform algebra, $\beta' \colon A' \to A'$ is a morphism, and $f \colon A \to A'$ is a morphism such that $f\beta = \beta'f$.  Then
\[
f \colon A_\beta \to A'_{\beta'}
\]
is a morphism.
\end{theorem}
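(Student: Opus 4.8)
The plan is to follow the proof of Theorem \ref{thm:homdendtwist} essentially verbatim, since Hom-tridendriform algebras differ from Hom-dendriform algebras only by the presence of a third operation $\bullet$ and four additional axioms of exactly the same shape. The central observation is that, for each of the seven axioms \eqref{HomTriDendriCondition1}--\eqref{HomTridDendriCondition7}, the corresponding identity for $A_\beta$ is obtained from the one for $A$ by applying $\beta^2$; hence the validity of the axioms in $A$ forces their validity in $A_\beta$.

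To establish this I would use only the hypothesis that $\beta$ is a morphism, which means $\beta$ commutes with $\alpha$ and satisfies $\beta(u \star v) = \beta(u) \star \beta(v)$ for each $\star \in \{\prec, \succ, \bullet\}$. Take axiom \eqref{HomTriDendriCondition1} as a representative case. In $A_\beta$ its left-hand side is $(x \prec_\beta y) \prec_\beta (\beta\alpha)(z)$; unwinding $\prec_\beta = \beta \circ \prec$ and pushing $\beta$ through $\prec$ via the morphism property twice yields $\beta^2\bigl((x \prec y) \prec \alpha(z)\bigr)$. The right-hand side $(\beta\alpha)(x) \prec_\beta (y \prec_\beta z + y \succ_\beta z + y \bullet_\beta z)$ collapses in the same way to $\beta^2\bigl(\alpha(x) \prec (y \prec z + y \succ z + y \bullet z)\bigr)$, where one first uses linearity of $\beta$ to factor it out of the inner sum. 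Axiom \eqref{HomTriDendriCondition1} for $A$ then gives the desired equality after applying $\beta^2$. The remaining six axioms, including the four involving $\bullet$, are handled by identical bookkeeping; as in the proof of Theorem \ref{thm:homdendtwist}, this verification never needs commutativity of $\beta$ with $\alpha$ in the operation slots, and the new twisting map of $A_\beta$ is the composite $\beta\alpha$ throughout.

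The two remaining assertions are routine. For multiplicativity, assuming $\alpha$ is a morphism of $A$, I would check that $\beta\alpha$ is a morphism of $A_\beta$: using $\beta\alpha = \alpha\beta$ and the morphism property of $\beta$, the two sides $(\beta\alpha) \circ \star_\beta$ and $\star_\beta \circ (\beta\alpha)^{\otimes 2}$ reduce to $\beta^2 \circ \alpha \circ \star$ and $\beta^2 \circ \star \circ \alpha^{\otimes 2}$ respectively, which coincide precisely because $\alpha$ is a morphism of $A$. For the functoriality statement, one verifies directly that $f \circ \star_\beta = \star'_{\beta'} \circ (f \otimes f)$, which follows from $f\beta = \beta'f$ together with compatibility of $f$ with $\star$, and that $f \circ (\beta\alpha) = (\beta'\alpha') \circ f$, which follows from $f\beta = \beta'f$ and $f\alpha = \alpha'f$. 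I do not expect any genuine obstacle: the entire argument is a uniform, operation-by-operation transport of the Hom-dendriform proof, and the only point requiring care is that all three products are twisted by the same $\beta$ and that the twisting map becomes the composite $\beta\alpha$.
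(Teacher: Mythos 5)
Your proposal is correct and matches the paper's intended argument exactly: the paper omits the proof of this theorem, stating it is a slight modification of the Hom-dendriform case (Theorem \ref{thm:homdendtwist}), whose proof is precisely the observation that each axiom for $A_\beta$ is $\beta^2$ applied to the corresponding axiom for $A$. Your detailed verification of a representative axiom, and of the multiplicativity and functoriality claims, fills in exactly what the paper leaves as routine.
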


\begin{corollary}
Let $(A, \prec,\succ, \bullet, \alpha)$ be a multiplicative Hom-tridendriform algebra.  Then
\[
A_{\alpha^n} = \left(A, \alpha^n ~\circ \prec, \alpha^n ~\circ \succ, \alpha^n \circ \bullet, \alpha^{n+1}\right)
\]
is also a multiplicative Hom-tridendriform algebra for each $n \geq 1$.
\end{corollary}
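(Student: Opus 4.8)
The plan is to deduce this corollary as the $\beta = \alpha^n$ special case of the preceding Hom-tridendriform twisting theorem, exactly paralleling how the analogous Hom-dendriform corollary was obtained from Theorem~\ref{thm:homdendtwist}. First I would unpack the multiplicativity hypothesis on $(A,\prec,\succ,\bullet,\alpha)$: by Definition~\ref{def:HomTriDendr} it says precisely that $\alpha$ is a morphism of Hom-tridendriform algebras, i.e. $\alpha$ is compatible with each of the three binary operations $\prec$, $\succ$, $\bullet$ and commutes with $\alpha$ itself. This is the only place the hypothesis enters.

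Next I would note that a composite of morphisms is again a morphism, so $\alpha^n = \alpha \circ \cdots \circ \alpha$ is a morphism of Hom-tridendriform algebras for every $n \geq 1$. Setting $\beta = \alpha^n$ in the preceding theorem then yields that
\[
A_{\alpha^n} = \left(A, \alpha^n \circ \prec, \alpha^n \circ \succ, \alpha^n \circ \bullet, \alpha^n \alpha\right)
\]
is a Hom-tridendriform algebra, and since $\alpha^n \alpha = \alpha^{n+1}$ this is exactly the claimed quintuple. The multiplicativity of $A_{\alpha^n}$ is delivered by the final clause of that same theorem, which guarantees that $A_\beta$ is multiplicative whenever $A$ is.

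There is essentially no obstacle here beyond confirming that $\alpha^n$ qualifies as a morphism; all of the genuine content has already been absorbed into the preceding theorem, whose proof in turn amounts to applying $\beta^2$ to the seven Hom-tridendriform axioms \eqref{HomTriDendriCondition1}--\eqref{HomTridDendriCondition7}. I would therefore record the proof in a single sentence: this is the $\beta = \alpha^n$ special case of the preceding theorem, with multiplicativity following from that theorem's multiplicativity clause.
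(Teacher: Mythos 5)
Your proposal is correct and matches the paper's intended argument: the paper explicitly omits this proof as a "slight modification" of the Hom-dendriform case, whose corollary is proved exactly as you describe, by taking $\beta = \alpha^n$ in the preceding twisting theorem and invoking its multiplicativity clause. Your observation that $\alpha^n$ is a morphism because composites of morphisms are morphisms is the only detail needed, and it is handled correctly.
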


\begin{corollary}
Let $(A,\prec, \succ, \bullet)$ be a tridendriform  algebra and $\beta \colon  A\rightarrow A$ be a morphism.  Then
\[
A_\beta = (A, \beta ~\circ \prec, \beta ~\circ \succ, \beta \circ \bullet, \beta)
\]
is a multiplicative Hom-tridendriform algebra.
\end{corollary}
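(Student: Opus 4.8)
The plan is to recognize this statement as the $\alpha = \Id$ special case of the preceding Hom-tridendriform twisting theorem, in exact parallel with the way the $\alpha = \Id$ corollary for Hom-dendriform algebras is deduced from Theorem \ref{thm:homdendtwist}. Indeed, regard the tridendriform algebra $(A,\prec,\succ,\bullet)$ as a Hom-tridendriform algebra $(A,\prec,\succ,\bullet,\Id)$ with trivial twisting map. Applying the twisting theorem with the morphism $\beta$ produces $A_\beta=(A,\beta\circ\prec,\beta\circ\succ,\beta\circ\bullet,\beta\Id)$, and since $\beta\Id=\beta$ this is precisely the quintuple in the statement. Thus the result follows immediately once the twisting theorem is in hand.

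If one prefers a direct verification, the key observation is that each of the seven Hom-tridendriform axioms \eqref{HomTriDendriCondition1}--\eqref{HomTridDendriCondition7} for $A_\beta$ is obtained from the corresponding tridendriform identity for $A$ by applying $\beta^2$. For instance, writing $\prec_\beta = \beta\circ\prec$ and using that $\beta$ is an algebra morphism, so that $\beta(u)\prec\beta(w)=\beta(u\prec w)$ and similarly for $\succ$ and $\bullet$, the left-hand side of \eqref{HomTriDendriCondition1} for $A_\beta$ becomes
\[
(x\prec_\beta y)\prec_\beta\beta(z)=\beta\bigl(\beta(x\prec y)\prec\beta(z)\bigr)=\beta^2\bigl((x\prec y)\prec z\bigr),
\]
while the right-hand side becomes $\beta^2\bigl(x\prec(y\prec z+y\succ z+y\bullet z)\bigr)$; these agree by the tridendriform axiom \eqref{HomTriDendriCondition1} with $\alpha=\Id$. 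The remaining six axioms follow the same pattern, the only bookkeeping being to track which of the three products appears at each position.

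Finally, for multiplicativity one notes that a tridendriform algebra is automatically a multiplicative Hom-tridendriform algebra with $\alpha=\Id$, since $\Id$ is trivially a morphism, so the multiplicativity clause of the twisting theorem applies. Concretely, for each of the three products $\mu_\beta\in\{\prec_\beta,\succ_\beta,\bullet_\beta\}$ one checks $\beta\circ\mu_\beta=\mu_\beta\circ\beta^{\otimes 2}$ directly from $\mu_\beta=\beta\circ\mu$ and the morphism property of $\beta$. I do not expect any genuine obstacle here: the content is entirely contained in the twisting theorem, and the only labor in a from-scratch proof is the routine, if lengthy, passage through all seven axioms.
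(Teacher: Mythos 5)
Your proposal is correct and matches the paper's intended argument: the paper omits the proof, noting that the Hom-tridendriform results are "slight modifications" of the dendriform ones, and the corresponding dendriform corollary is proved in exactly your way, as the $\alpha=\Id$ special case of the twisting theorem. Your optional direct verification via applying $\beta^2$ to each of the seven axioms is also precisely the paper's proof technique for the general twisting theorem.
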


\begin{theorem}
\label{htdha}
Let $(A, \prec,\succ, \bullet, \alpha)$   be a Hom-tridendriform algebra.  Define a linear map $\ast\colon A\otimes A\rightarrow A$ by
\begin{equation}
x \ast y = x\prec y+ x \succ y + x\bullet y
\end{equation}
for $x,y \in A$.  Then
\[
A_a = (A, \ast, \alpha)
\]
is a Hom-associative algebra, hence also a $G$-Hom-associative algebra for every subgroup $G$ of $\mathcal{S}_3$.
\end{theorem}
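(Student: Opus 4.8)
The plan is to follow the proof of Theorem \ref{hdtoha} essentially verbatim, the only new ingredient being the third operation $\bullet$ together with the four additional axioms it enters. I would first establish the Hom-associative identity \eqref{HomAssCond} for $\ast$, namely
\[
(x\ast y)\ast \alpha(z) = \alpha(x)\ast (y\ast z),
\]
and then deduce the $G$-Hom-associativity for every subgroup $G$ of $\Sthree$ from the earlier remark that any Hom-associative algebra is automatically a $G$-Hom-associative algebra.

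First I would expand the left-hand side. Since $x\ast y = x\prec y + x\succ y + x\bullet y$, the product $(x\ast y)\ast \alpha(z)$ unfolds into nine terms of the shape $(x\diamond_1 y)\diamond_2 \alpha(z)$ with $\diamond_1,\diamond_2\in\{\prec,\succ,\bullet\}$. The bookkeeping observation is that these are exactly the nine summands occurring on the left-hand sides of the seven Hom-tridendriform axioms: the three terms with $\diamond_2 = \prec$ are the left sides of \eqref{HomTriDendriCondition1}, \eqref{HomTridDendriCondition2}, and \eqref{HomTridDendriCondition6}; all three terms with $\diamond_2 = \succ$ are bundled together as the single left side of \eqref{HomTriDendriCondition3}, which already reads $(x\prec y + x\succ y + x\bullet y)\succ \alpha(z)$; and the three terms with $\diamond_2 = \bullet$ are the left sides of \eqref{HomTridDendriCondition4}, \eqref{HomTridDendriCondition5}, and \eqref{HomTridDendriCondition7}. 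Hence $(x\ast y)\ast \alpha(z)$ is the sum of all seven left-hand sides.

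Next I would expand the right-hand side $\alpha(x)\ast (y\ast z)$ into the nine terms $\alpha(x)\diamond_1 (y\diamond_2 z)$ and match them against the right-hand sides of the same seven axioms: the three terms with $\diamond_1 = \prec$ are bundled as the single right side $\alpha(x)\prec(y\prec z + y\succ z + y\bullet z)$ of \eqref{HomTriDendriCondition1}; the three terms with $\diamond_1 = \succ$ are the right sides of \eqref{HomTridDendriCondition2}, \eqref{HomTriDendriCondition3}, and \eqref{HomTridDendriCondition5}; and the three terms with $\diamond_1 = \bullet$ are the right sides of \eqref{HomTridDendriCondition6}, \eqref{HomTridDendriCondition4}, and \eqref{HomTridDendriCondition7}. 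Thus $\alpha(x)\ast (y\ast z)$ is the sum of all seven right-hand sides. Adding the seven axiom equations term by term then yields the required identity $(x\ast y)\ast \alpha(z) = \alpha(x)\ast (y\ast z)$.

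Since the whole argument reduces to summing the defining identities, there is no substantive obstacle; the only point demanding care is the term-matching, and in particular remembering that axiom \eqref{HomTriDendriCondition3} packages three $\succ$-products on its left and axiom \eqref{HomTriDendriCondition1} packages three $\prec$-images on its right, so that these two axioms each account for three of the nine terms while the other five contribute one apiece. The concluding clause ``hence also a $G$-Hom-associative algebra'' then needs no extra work, being an immediate consequence of the remark that a Hom-associative algebra is a $G$-Hom-associative algebra for every subgroup $G$ of $\Sthree$.
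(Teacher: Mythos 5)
Your proposal is correct and is essentially the paper's own argument: the paper omits the proof as a ``slight modification'' of Theorem \ref{hdtoha}, which is proved exactly by summing the defining axioms, and your nine-term bookkeeping (with \eqref{HomTriDendriCondition1} and \eqref{HomTriDendriCondition3} each absorbing three terms) carries that over faithfully. The only cosmetic quibble is that axiom \eqref{HomTriDendriCondition3} is typeset with its sides in the opposite order from the other six, so what you call its ``left side'' is literally its right-hand side as printed, but you clearly account for the correct grouping.
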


\begin{theorem}
The forgetful functor
\[
\cO \colon \Homtridend \to \HomMod
\]
given by
\[
\cO(A, \prec, \succ, \bullet, \alpha) = (A,\alpha)
\]
admits a left adjoint.
\end{theorem}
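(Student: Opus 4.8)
The plan is to follow the two-step strategy used for Theorem \ref{thm:freehrb} and sketched for Theorem \ref{thm:freehd}. I introduce an intermediate category $\bF$ whose objects are quintuples $(A, \mu_l, \mu_r, \mu_\bullet, \alpha)$, where $A$ is a $\bbK$-module, $\mu_l$, $\mu_r$, and $\mu_\bullet$ are three bilinear maps on $A$ corresponding to $\prec$, $\succ$, and $\bullet$, and $\alpha$ is a linear self-map; a morphism is a linear map compatible with all four structure maps. The forgetful functor $\cO$ then factors as the composition
\[
\Homtridend \xrightarrow{\cO_2} \bF \xrightarrow{\cO_1} \HomMod,
\]
so it suffices to produce a left adjoint $\cF_i$ for each $\cO_i$; the composite $\cF = \cF_2 \circ \cF_1$ is the desired left adjoint.

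For the left adjoint $\cF_1 \colon \HomMod \to \bF$ of $\cO_1$, I would modify the decorated-tree construction of Theorem \ref{thm:freeD}. Since there is no Rota-Baxter operator here, a decorated $n$-tree is a pair $(\psi, f)$ in which $\psi$ is an $n$-tree and $f$ is a function from the internal nodes of $\psi$ to $\bbZ_{\geq 0} \times \{l, r, \bullet\}$ (in contrast to $\bbZ_{\geq 0} \times \{l, r\}$ in the Hom-dendriform case). There are now three grafting operations $\vee_l$, $\vee_r$, $\vee_\bullet$, which decorate the new lowest internal vertex by $(0,l)$, $(0,r)$, and $(0,\bullet)$ respectively, while $\alpha$ acts by adding $1$ to the integer component of the decoration of the lowest internal vertex. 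As in \eqref{eq:opB}, such a tree prescribes how to multiply $n$ elements of an object of $\bF$: at an internal node decorated $(m, *)$ one multiplies the two incoming elements using $\mu_*$ and then applies $\alpha^m$. One builds $\cF_1(A)$ as in \eqref{eq:FA}, as a direct sum over decorated trees of copies of $A^{\otimes n}$, and verifies the universal property exactly as in the proof of Theorem \ref{thm:freeD}.

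For the left adjoint $\cF_2 \colon \bF \to \Homtridend$ of $\cO_2$, I would repeat the ideal-theoretic argument of Theorem \ref{thm:G}. Call a submodule $I \subseteq A$ an \emph{ideal} of an object $(A, \mu_l, \mu_r, \mu_\bullet, \alpha) \in \bF$ if it is closed under $\alpha$ and satisfies $\mu_*(I,A) \subseteq I$ and $\mu_*(A,I) \subseteq I$ for every $* \in \{l, r, \bullet\}$; the analog of Proposition \ref{prop:ideal} producing the ideal $\langle S \rangle$ generated by a subset $S$ goes through verbatim with this enlarged closure condition. Taking $S$ to be the set of all instances of the seven Hom-tridendriform relations \eqref{HomTriDendriCondition1}--\eqref{HomTridDendriCondition7}, one sets $\cF_2(A) = A/\langle S \rangle$ and checks the universal property as before: any morphism to a Hom-tridendriform algebra kills $S$ and, by the inductive argument along the filtration $S \subseteq S^1 \subseteq S^2 \subseteq \cdots$, kills all of $\langle S \rangle$, hence factors uniquely through the quotient. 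The computations are entirely routine; the only point requiring care is the purely clerical expansion of the bookkeeping to accommodate three binary operations and seven defining relations rather than two and three, so I do not expect any genuine obstacle.
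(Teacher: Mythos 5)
Your proposal is correct and follows essentially the same route the paper intends: it is the straightforward three-operation extension of the two-step factorization through an intermediate category, the decorated-tree construction of the free object, and the quotient by the ideal generated by the defining relations, exactly as in the Hom-dendriform case whose proof the paper sketches and then declares "slight modifications" suffice for the tridendriform analog. No gaps.
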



\section{Rota-Baxter Hom-associative and Hom-dendriform algebras}

In this section, we discuss an adjunction between a full subcategory of the category of Rota-Baxter Hom-associative algebras and the category of Hom-(tri)dendriform algebras.

Denote by $\HomRBA_\lambda$ the full subcategory of the category of Rota-Baxter Hom-associative algebras of weight $\lambda$, consisting of $(A,\mu,\alpha,R) \in \HomRB_\lambda$ in which the twisting map commutes with the Rota-Baxter operator, i.e., $\alpha R = R\alpha$.

\begin{theorem}
\label{thm:HRBHD}
There is an adjoint pair of functors
   \begin{equation}
   \label{eq:HDD}
   U_{HD} \colon \Homdidend \rightleftarrows \HomRBA_\lambda \colon HD,
   \end{equation}
in which the right adjoint is given by
\[
HD(A,\mu,\alpha,R) = (A,\prec,\succ,\alpha) \in \Homdidend
\]
with
   \begin{equation}
   \label{eq:inducedHDD}
   x \prec y ~=~ xR(y) + \lambda xy \andspace
   x \succ y ~=~ R(x)y
   \end{equation}
for $x, y \in A$.
\end{theorem}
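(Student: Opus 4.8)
The plan is to separate the statement into two independent tasks: first checking that the prescription $HD$ really lands in $\Homdidend$ and is functorial, so that it is a legitimate right-adjoint candidate, and then constructing the left adjoint $U_{HD}$ and verifying the Hom-set bijection. The first task is a direct computation; the second carries the real content, so I would organize the write-up accordingly.

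For the well-definedness of $HD$, I would substitute the formulas \eqref{eq:inducedHDD} into each of the three Hom-dendriform axioms \eqref{HomDendriCondition1}--\eqref{HomDendriCondition3} and reduce both sides to a common expression, using exactly three facts about $A \in \HomRBA_\lambda$: Hom-associativity \eqref{HomAssCond} in the form $(ab)\alpha(c) = \alpha(a)(bc)$, the commuting relation $\alpha R = R\alpha$, and the Rota-Baxter identity \eqref{eq:RB}. The mechanism is the same in each case: the combination $y \prec z + y \succ z = yR(z) + R(y)z + \lambda yz$ (and likewise $x \prec y + x \succ y$) is precisely the argument on which \eqref{eq:RB} collapses $R$ into a product, giving $R(y \prec z + y \succ z) = R(y)R(z)$. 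For axiom \eqref{HomDendriCondition3} this yields $(x \prec y + x \succ y)\succ \alpha(z) = (R(x)R(y))\alpha(z)$, while the right-hand side $\alpha(x)\succ(y\succ z) = R(\alpha(x))(R(y)z)$ becomes the same thing after one use of \eqref{HomAssCond} together with $\alpha R = R\alpha$; axioms \eqref{HomDendriCondition1} and \eqref{HomDendriCondition2} follow the same pattern, the latter needing only Hom-associativity and the commuting relation. Functoriality is immediate, since $\prec$ and $\succ$ are built solely from $\mu$, $R$, and $\lambda$, all of which a morphism in $\HomRBA_\lambda$ respects.

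For the left adjoint I would reuse the two-step free-algebra machinery of Section \ref{sec:HRB1}. Note first that $\HomRBA_\lambda$ is obtained from the Hom-associative ($G = G_1$) case of $\HomRB_\lambda$ by imposing the extra equations $\alpha R = R\alpha$; adjoining these to the generating relations $S$ in the proof of Theorem \ref{thm:G} (and invoking Proposition \ref{prop:ideal}) produces a free functor $\cF \colon \HomMod \to \HomRBA_\lambda$. Then, given a Hom-dendriform algebra $D = (D, \prec, \succ, \alpha)$ with universal Hom-module map $\iota \colon (D,\alpha) \to \cF(D,\alpha)$, I would set
\[
U_{HD}(D) = \cF(D,\alpha)\big/ \langle S_D \rangle,
\]
where $\langle S_D \rangle$ is the ideal (in the sense of Proposition \ref{prop:ideal}) generated by
\[
\iota(x \prec y) - \bigl(\iota(x)\,R(\iota(y)) + \lambda\, \iota(x)\iota(y)\bigr)
\andspace
\iota(x \succ y) - R(\iota(x))\,\iota(y)
\]
for all $x, y \in D$; that is, $U_{HD}(D)$ is the universal Rota-Baxter Hom-associative algebra in which the induced operations \eqref{eq:inducedHDD} reproduce those of $D$.

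The bijection $\Hom_{\HomRBA_\lambda}(U_{HD}(D), A) \cong \Hom_{\Homdidend}(D, HD(A))$ then follows by chasing universal properties. A Rota-Baxter morphism $U_{HD}(D) \to A$ is the same as a Rota-Baxter morphism $\phi \colon \cF(D,\alpha) \to A$ annihilating $\langle S_D \rangle$; by the defining adjunction of $\cF$ such $\phi$ correspond bijectively to Hom-module maps $\bar\phi = \phi \circ \iota \colon (D,\alpha) \to (A,\alpha)$. Since $\phi$ commutes with $\mu$ and $R$, it annihilates the two families of generators exactly when $\bar\phi(x \prec y) = \bar\phi(x)R(\bar\phi(y)) + \lambda\,\bar\phi(x)\bar\phi(y)$ and $\bar\phi(x \succ y) = R(\bar\phi(x))\bar\phi(y)$ for all $x,y$, i.e. exactly when $\bar\phi$ is a Hom-dendriform morphism $D \to HD(A)$ by \eqref{eq:inducedHDD}; naturality in both variables is then routine. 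The main obstacle is this construction step: one must first confirm that the free functor into $\HomRBA_\lambda$ exists, namely that adjoining $\alpha R = R\alpha$ to the generating relations is compatible with the tree-and-ideal construction of Theorems \ref{thm:freeD} and \ref{thm:G}, and then verify the bijection above carefully. By contrast the axiom check for $HD$, though it uses all three defining properties of $\HomRBA_\lambda$, is entirely mechanical.
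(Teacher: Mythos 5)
Your proposal is correct and follows essentially the same route as the paper: the verification that $HD(A)$ satisfies \eqref{HomDendriCondition1}--\eqref{HomDendriCondition3} uses exactly the paper's three ingredients (Hom-associativity, $\alpha R = R\alpha$, and the collapsed Rota-Baxter identity $R(y\prec z + y\succ z)=R(y)R(z)$), and your $U_{HD}$ is the paper's quotient $\cF_1(A)/\langle S\rangle$ merely reorganized into two successive quotients (first by the $\HomRBA_\lambda$ relations, then by the relations identifying $\prec,\succ$ with the induced operations), which yields an isomorphic object. The universal-property chase at the end matches the paper's appeal to the argument of Theorem \ref{thm:G}.
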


Recall that $xy$ denotes $\mu(x,y)$.

\begin{proof}
First we need to show that $HD(A) = (A,\prec,\succ,\alpha)$ is a Hom-dendriform algebra for any $A = (A,\mu,\alpha,R) \in \HomRBA_\lambda$.  In other words, we need to establish the three conditions \eqref{HomDendriCondition1}-\eqref{HomDendriCondition3} for $HD(A)$.

For the axiom \eqref{HomDendriCondition1}, first note that for $y$ and $z \in A$ we have
\begin{equation}
\label{lrsum}
\begin{split}
R(y \prec z + y \succ z)
&= R\left(yR(z) + \lambda yz + R(y)z\right)\\
&= R(y)R(z)
\end{split}
\end{equation}
by the Rota-Baxter identity \eqref{eq:RB}.  Using \eqref{lrsum} we compute as follows:
\[
\begin{split}
(x \prec y) \prec \alpha(z)
&= \left(x R(y) + \lambda xy\right)R(\alpha(z)) + \lambda\left(x R(y) + \lambda xy\right)\alpha(z) \\
&= (x R(y))\alpha(R(z)) + \lambda (xy)\alpha(R(z)) + \lambda(xR(y))\alpha(z) + \lambda^2(xy)\alpha(z) \\
&= \alpha(x)(R(y)R(z)) + \lambda\alpha(x)(yR(z)) + \lambda\alpha(x)(R(y)z) + \lambda^2\alpha(x)(yz) \\
&= \alpha(x)R(y \prec z + y \succ z) + \lambda\alpha(x)(y \prec z + y \succ z)\\
&= \alpha(x) \prec (y \prec z + y \succ z).
\end{split}
\]
In the second equality above, we used $\alpha R = R\alpha$.  In the third equality, we used the Hom-associativity axiom \eqref{HomAssCond}, which holds because $A \in \HomRBA_\lambda$.  This proves \eqref{HomDendriCondition1} for $HD(A)$.

For \eqref{HomDendriCondition2} we compute as follows:
\[
\begin{split}
(x \succ y) \prec \alpha(z)
&= (R(x)y)R(\alpha(z)) + \lambda(R(x)y)\alpha(z)\\
&= (R(x)y)\alpha(R(z)) + \lambda\alpha(R(x))(yz)\\
&= \alpha(R(x))(yR(z)) + R(\alpha(x))(\lambda yz)\\
&= R(\alpha(x))\left(y R(z) + \lambda yz\right)\\
&= \alpha(x) \succ (y \prec z).
\end{split}
\]
The Hom-associativity axiom \eqref{HomAssCond} was used in the second and the third equalities above, and $\alpha R = R\alpha$ was used in the second to the fourth equalities.  This proves \eqref{HomDendriCondition2} for $HD(A)$.

For \eqref{HomDendriCondition3} we compute as follows:
\[
\begin{split}
(x \prec y + x \succ y) \succ \alpha(z)
&= R(x \prec y + x \succ y)\alpha(z)\\
&= (R(x)R(y))\alpha(z)\\
&= \alpha(R(x))(R(y)z)\\
&= R(\alpha(x))(R(y)z)\\
&= \alpha(x) \succ (y \succ z).
\end{split}
\]
We used \eqref{lrsum} in the second equality, the Hom-associativity axiom \eqref{HomAssCond} in the third equality, and $\alpha R = R\alpha$ in the fourth equality.

We have shown that $HD(A)$ is a Hom-dendriform algebra.  The functoriality of $HD$ is clear.

Now we show that $HD$ admits a left adjoint
\[
U_{HD}\colon \Homdidend \to \HomRBA_\lambda.
\]
Pick a Hom-dendriform algebra $(A,\prec,\succ,\alpha)$.  Using Theorem \ref{thm:freeD}, consider the free object
\[
(\cF_1(A),\mu,\alpha,R) \in \mathbf{D}
\]
associated to the Hom-module $(A,\alpha)$.  Let $S$ be the subset of $\cF_1(A)$ consisting of:
\begin{enumerate}
\item
$im(\mu\circ (\mu \otimes \alpha - \alpha \otimes \mu))$.
\item
$im(\alpha\circ R - R\circ\alpha)$.
\item
$im(\mu\circ R^{\otimes 2} - R\circ\mu\circ(R \otimes \Id + \Id \otimes R + \lambda \Id^{\otimes 2}))$.
\item $x \prec y - (xR(y) + \lambda xy)$  for $x,y \in A$.
\item $x \succ y - R(x)y$ for $x,y \in A$.
\end{enumerate}
Here we are identifying $A$ as a submodule of $\cF_1(A)$ via the inclusion $\iota$ \eqref{eq:iota}.  Let $\langle S \rangle$ be the ideal generated by $S$ (as in Proposition \ref{prop:ideal}).

From the definition of $S$, it is straightforward to see that the quotient
   \[
   U_{HD}(A) ~=~ \cF_1(A)/\langle S \rangle,
   \]
with the induced operations of $\mu$, $\alpha$, and $R$, is an object in $\HomRBA_\lambda$.  The functoriality of $U_{HD}$ is also clear.  There is a natural map
   \[
   i \colon A \xrightarrow{\iota} \cF_1(A) \xrightarrow{pr} U_{HD}(A).
   \]
The proof that $U_{HD}$ is the left adjoint of the functor $HD$ is essentially identical to the last two paragraphs in the proof of Theorem \ref{thm:G}, using the freeness of $\cF_1(A)$ and the definitions \eqref{eq:inducedHDD}.
\end{proof}

The following result says that, under the condition that the twisting map commutes with the Rota-Baxter operator, a Rota-Baxter Hom-associative algebra can be given a new Hom-associative structure involving both $R$ and $\lambda$.

\begin{corollary}
Let $(A,\mu,\alpha,R)$ be an object in $\HomRBA_\lambda$. Define a multiplication on $A$ by
$$x\ast y=x R(y)+R(x) y+\lambda x y
$$
for $x,y \in A$.  Then
\[
A' = (A,\ast,\alpha)
\]
is a Hom-associative algebra.  Moreover, we have
$$R(x\ast y)=R(x)R(y)\quad\text{and}\quad \widetilde{R}(x\ast y)=-\widetilde{R}(x) \widetilde{R}(y)
$$
where $\widetilde{R}(x)=-\lambda x-R(x)$.
\end{corollary}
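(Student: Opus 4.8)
The plan is to observe that the multiplication $\ast$ is nothing but the sum of the two Hom-dendriform operations produced by the functor $HD$ of Theorem~\ref{thm:HRBHD}, and then to deduce all three assertions from results already proved. Concretely, for the operations $x \prec y = xR(y) + \lambda xy$ and $x \succ y = R(x)y$ of \eqref{eq:inducedHDD}, one has $x \prec y + x \succ y = xR(y) + R(x)y + \lambda xy = x \ast y$.

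For the Hom-associativity of $A' = (A,\ast,\alpha)$, I would avoid a direct computation and instead chain two earlier theorems. Since $A \in \HomRBA_\lambda$, Theorem~\ref{thm:HRBHD} guarantees that $HD(A) = (A,\prec,\succ,\alpha)$ is a Hom-dendriform algebra. Applying Theorem~\ref{hdtoha} to this Hom-dendriform algebra, the operation $x \star y = x \prec y + x \succ y$ equips $A$ with a Hom-associative structure; but $x \star y = x \ast y$, so $(A,\ast,\alpha)$ is Hom-associative. This settles the first claim with no new calculation.

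For the identity $R(x \ast y) = R(x)R(y)$, I would apply the Rota-Baxter identity \eqref{eq:RB} directly: writing $x \ast y = R(x)y + xR(y) + \lambda xy$ gives $R(x \ast y) = R(R(x)y + xR(y) + \lambda xy) = R(x)R(y)$, which is precisely the computation \eqref{lrsum} already carried out inside the proof of Theorem~\ref{thm:HRBHD}.

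For the last identity, I would use that $\widetilde{R} = -R - \lambda\,\Id$ is itself a Rota-Baxter operator of weight $\lambda$ for the multiplication $\mu$, as recorded in the first Example following the definition of Rota-Baxter $G$-Hom-associative algebras. The Rota-Baxter identity for $\widetilde{R}$ reads $\widetilde{R}(x)\widetilde{R}(y) = \widetilde{R}\bigl(\widetilde{R}(x)y + x\widetilde{R}(y) + \lambda xy\bigr)$, and substituting $\widetilde{R}(u) = -\lambda u - R(u)$ collapses the argument to $-(x \ast y)$; linearity of $\widetilde{R}$ then gives $\widetilde{R}(x)\widetilde{R}(y) = -\widetilde{R}(x \ast y)$, which is the desired formula. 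I expect no genuine obstacle in this corollary; the only care needed is the bookkeeping point that the products on the right-hand sides of the last two identities are taken with the original multiplication $\mu$ rather than $\ast$, together with tracking the signs in the $\widetilde{R}$ substitution.
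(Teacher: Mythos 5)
Your proposal is correct, and for the first two claims it coincides exactly with the paper's proof: Hom-associativity of $(A,\ast,\alpha)$ is deduced by composing Theorem~\ref{thm:HRBHD} with Theorem~\ref{hdtoha} via the identification $x\ast y = x\prec y + x\succ y$, and $R(x\ast y)=R(x)R(y)$ is just the Rota-Baxter identity \eqref{eq:RB} read off the definition of $\ast$. The only divergence is in the last identity. The paper verifies $\widetilde{R}(x\ast y)=-\widetilde{R}(x)\widetilde{R}(y)$ by direct expansion: it writes $\widetilde{R}(x\ast y)=-\lambda(x\ast y)-R(x\ast y)$, substitutes the already-established $R(x\ast y)=R(x)R(y)$, and factors the resulting four terms as $-(-\lambda x - R(x))(-\lambda y - R(y))$. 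You instead invoke the Example stating that $\widetilde{R}=-R-\lambda\,\Id$ is again a Rota-Baxter operator of weight $\lambda$, write its Rota-Baxter identity, and check that its argument collapses to $-(x\ast y)$; your algebra there is correct. Your route is more conceptual and explains \emph{why} the sign appears (the $\ast$-product built from $\widetilde{R}$ is the negative of the one built from $R$), at the cost of relying on that Example, which the paper states without proof; the paper's direct computation is self-contained and equally short. Either way the corollary is established.
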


\begin{proof}
That $A'$ is a Hom-associative algebra is an immediate consequence of Theorems \ref{hdtoha} and \ref{thm:HRBHD}. The identity involving $R$ is simply the Rota-Baxter identity \eqref{eq:RB}.  The identity involving $\widetilde{R}$ can be checked by the following computation:
\[
\begin{split}
\Rtilde(x \ast y)
&= -\lambda(x \ast y) - R(x \ast y)\\
&= -\lambda xR(y) - \lambda R(x)y - \lambda^2 xy - R(x)R(y)\\
&= -(-\lambda x - R(x))(-\lambda y - R(y))\\
&= -\Rtilde(x) \Rtilde(y).
\end{split}
\]
\end{proof}

The following result is the Hom-tridendriform analog of Theorem \ref{thm:HRBHD}.

\begin{theorem}
\label{thm:Homtridend}
There is an adjoint pair of functors
   \[
   U_{HT} \colon \Homtridend \rightleftarrows \HomRBA_\lambda \colon HT,
   \]
in which $U_{HT}$ is the left adjoint.  For $(A,\mu,\alpha,R) \in \HomRBA_\lambda$, the three binary operations in the object
   \[
   HT(A) = (A,\prec,\succ,\bullet,\alpha) \in \Homtridend
   \]
are defined as
   \[
   x \prec y= xR(y), \quad x \succ y = R(x)y, \quad x \bullet y = \lambda xy
   \]
for $x,y \in A$.
\end{theorem}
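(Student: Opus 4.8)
The strategy mirrors the Hom-dendriform case, Theorem~\ref{thm:HRBHD}, essentially verbatim, with the Hom-tridendriform axioms \eqref{HomTriDendriCondition1}--\eqref{HomTridDendriCondition7} replacing the three Hom-dendriform axioms. First I would verify that $HT(A) = (A,\prec,\succ,\bullet,\alpha)$ is a genuine Hom-tridendriform algebra for each $A=(A,\mu,\alpha,R)\in\HomRBA_\lambda$. The key preliminary identity, analogous to \eqref{lrsum}, is that for $y,z\in A$,
\[
R(y\prec z + y\succ z + y\bullet z) = R\bigl(yR(z) + R(y)z + \lambda yz\bigr) = R(y)R(z),
\]
which is exactly the Rota-Baxter identity \eqref{eq:RB}. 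I would then check each of the seven axioms by a direct computation, substituting the definitions $x\prec y = xR(y)$, $x\succ y = R(x)y$, $x\bullet y = \lambda xy$, and repeatedly invoking Hom-associativity \eqref{HomAssCond} together with $\alpha R = R\alpha$ (which holds since $A\in\HomRBA_\lambda$). For instance, axiom \eqref{HomTridDendriCondition7} follows from $\alpha R = R\alpha$ and Hom-associativity applied to $\lambda^2$ times a triple product; axiom \eqref{HomTridDendriCondition5} reduces to $(R(x)y)R(\alpha(z)) = \alpha(R(x))(yR(z))$, i.e. Hom-associativity after commuting $\alpha$ past $R$; and axiom \eqref{HomTridDendriCondition3} uses the displayed identity above in its first step. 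These are all routine but numerous, which is why I expect the bookkeeping across all seven axioms to be the main (if only mildly taxing) obstacle.

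Having shown $HT$ is a well-defined functor $\HomRBA_\lambda \to \Homtridend$, I would construct the left adjoint $U_{HT}$ exactly as $U_{HD}$ was constructed. Given a Hom-tridendriform algebra $(A,\prec,\succ,\bullet,\alpha)$, form the free object $(\cF_1(A),\mu,\alpha,R)\in\mathbf{D}$ of Theorem~\ref{thm:freeD} on the underlying Hom-module $(A,\alpha)$, identifying $A$ as a submodule of $\cF_1(A)$ via $\iota$ from \eqref{eq:iota}. Let $S\subseteq\cF_1(A)$ be the subset consisting of the Hom-associativity relations $im(\mu\circ(\mu\otimes\alpha - \alpha\otimes\mu))$, the commuting relation $im(\alpha R - R\alpha)$, the Rota-Baxter relation $im(\mu\circ R^{\otimes 2} - R\circ\mu\circ(R\otimes\Id + \Id\otimes R + \lambda\Id^{\otimes 2}))$, and the three defining relations $x\prec y - xR(y)$, $x\succ y - R(x)y$, and $x\bullet y - \lambda xy$ for $x,y\in A$. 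Then set
\[
U_{HT}(A) = \cF_1(A)/\langle S\rangle,
\]
the quotient by the ideal generated by $S$ (Proposition~\ref{prop:ideal}, whose Hom-tridendriform analog is subsumed by the $\mathbf{D}$-level statement), with the induced operations.

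By construction $U_{HT}(A)\in\HomRBA_\lambda$, and $U_{HT}$ is functorial. Finally I would establish the adjunction. There is a natural map $i\colon A\xrightarrow{\iota}\cF_1(A)\xrightarrow{pr}U_{HT}(A)$, and the universal property follows by the same argument as in the last two paragraphs of the proof of Theorem~\ref{thm:G}: given a morphism $f\colon A\to HT(B)$ for $B\in\HomRBA_\lambda$, freeness of $\cF_1(A)$ yields a unique $\mathbf{D}$-morphism $\cF_1(A)\to B$ extending $f$, and one checks that $f(S^n)=0$ for all $n$ by induction, so this morphism factors uniquely through the quotient $U_{HT}(A)$. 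The verification that the relations in $S$ map to zero uses precisely that $B$ satisfies Hom-associativity, $\alpha R = R\alpha$, the Rota-Baxter identity, and the definitions \eqref{eq:inducedHDD}-style formulas for $\prec,\succ,\bullet$ in $HT(B)$. Since every step is a direct transcription of the already-proved Hom-dendriform case, the entire argument can be compressed, and the only genuine content is the seven-axiom check in the first paragraph.
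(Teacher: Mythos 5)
Your proposal is correct and follows exactly the route the paper takes: the paper's own proof of this theorem is just the remark that it is very similar to that of Theorem \ref{thm:HRBHD}, i.e., the direct verification of the seven axioms using the Rota-Baxter identity (your analog of \eqref{lrsum}), Hom-associativity and $\alpha R = R\alpha$, followed by constructing $U_{HT}$ as the quotient of $\cF_1(A)$ by the ideal generated by the defining relations, precisely as you describe. One cosmetic slip: the identity you attribute to axiom \eqref{HomTridDendriCondition5} is really the reduction of axiom \eqref{HomTridDendriCondition2}; axiom \eqref{HomTridDendriCondition5} reduces to $\lambda(R(x)y)\alpha(z)=\lambda\,\alpha(R(x))(yz)$, which is plain Hom-associativity, so the argument is unaffected.
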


\begin{proof}
The proof is very similar to that of Theorem \ref{thm:HRBHD}, so we will omit the details.
\end{proof}

\bibliographystyle{amsplain}

\begin{thebibliography}{10}


\bibitem{AizawaSaito}
Aizawa N., Sato H., \emph{ $q$-Deformation of the Virasoro algebra with central extension,}
Phys. Lett. B
\textbf{256} (1991), no. 1, 185--190.



\bibitem{aguiar1}
Aguiar M., \emph{Pre-Poison algebras}, Lett. Math. Phys. 54 (2000), 263--277.



\bibitem{aguiar2}
Aguiar M., \emph{Infinitesimal Hopf algebras}, Contemp. Math.\textbf{ 267} (2000), 1-29.


\bibitem{Aguiar3} Aguiar M., \emph{Infinitesimal bialgebras, preLie and dendriform algebras}, in: Hopf algebras in: Lect. Notes Pure Appl. Math., vol 237, Marcel Dekker, New york, 2004, 1--33.

\bibitem{am}
Aguiar M.  and Moreira W., Combinatorics of the free Baxter algebra,  Electron. J. Combin. 13 (2006), no. 1, Research Paper 17, 38 pp.




\bibitem{AEM}Ammar F., Ejbehi   Z.  and Makhlouf A., \emph{ Cohomology and Deformations of Hom-algebras, } Journal of Lie Theory \textbf{21} No. 4,  (2011) 813--836 .

\bibitem {AmmarMakhloufJA2010}Ammar F.   and Makhlouf A., \emph{Hom-Lie algebras and Hom-Lie admissible superalgebras}, J.  Algebra, Vol. \textbf{324} (7), (2010)  1513--1528.

\bibitem{An-Bai} An H. and Bai C. \emph{From Rota-Baxter algebras to preLie algebras}, e-Print
 arXiv:0711.1389v1 (2007).


 \bibitem{AM2008} Ataguema H., Makhlouf A. and Silvestrov  S., \emph{
Generalization of $n$-ary Nambu algebras and beyond}, Journal of
Mathematical Physics \textbf{50}, 1 (2009).


\bibitem{BaiBellierGuo}Bai C.,  Bellier O., Guo L. and   Ni X., \emph{Splitting of operations, Manin products and Rota-Baxter operators}, e-Print
 arXiv:1106.6080v1 (2011).

\bibitem{Baxter} Baxter G., \emph{An analytic problem whose solution follows from a simple algebraic identity}, Pacific J. Math. \textbf{10} (1960) 731--742.

\bibitem{Drinfeld}  Belavin A. A. and  Drinfeld V.G., \emph{Solutions of the classical Yang-Baxter equation for simple Lie algebras,} Funct. Anal. Appl., \textbf{16}, (1982) 159--180.

\bibitem{BenayadiMakhlouf} Benayadi S. and Makhlouf A.,  \emph{Hom-Lie algebras with symmetric invariant nondegenerate bilinear form}, e-Print
 arXiv:1009.4226 (2010)

\bibitem{cartier} Cartier P., \emph{On the structure of free Baxter algebras}, Advances in Math., \textbf{9}, (1972) 253--265.

\bibitem{Canepl2009}  Caenepeel S.,   Goyvaerts I., \emph{Monoidal Hom-Hopf algebras}, Comm. Alg. \textbf{39} (2011) 2216--2240.

\bibitem{ChaiElinPop} Chaichian M., Ellinas D., Popowicz Z., \emph{ Quantum conformal algebra with
central extension,} Phys. Lett. B \textbf{248}, no. 1-2, (1990) 95--99.

\bibitem{ChaiKuLukPopPresn} Chaichian, M.,
Isaev A. P., Lukierski J., Popowicz Z.,
Pre\v{s}najder P., \emph{ $q$-Deformations of Virasoro
algebra and conformal dimensions,} Phys. Lett. B
\textbf{262}   (1), (1991) 32--38.

\bibitem{ChaiIsKuLuk}  Chaichian M., Kulish P.,
Lukierski J., \emph{ $q$-Deformed Jacobi identity,
$q$-oscillators and $q$-deformed
infinite-dimensional algebras,} Phys. Lett. B
\textbf{237} , no. 3-4, (1990) 401--406.

\bibitem{ChaiPopPres} Chaichian, M.,
Popowicz, Z. and Pre\v{s}najder, P., \emph{  $q$-Virasoro
algebra and its relation to the $q$-deformed KdV
system,} Phys. Lett. B \textbf{249}, no. 1,
 (1990) 63--65.

\bibitem{ConnesKreimer} Connes A.  and  Kreimer D., \emph{Hopf algebras, Renormalization and Noncommutative Geometry,}
Comm. in Math. Phys., \textbf{199}, 203, (1998).

\bibitem{CurtrZachos1} Curtright T. L., Zachos C. K., \emph{  Deforming maps for quantum algebras,} Phys. Lett. B \textbf{243}, no. 3, 237--244  (1990).

\bibitem{DaskaloyannisGendefVir}
Daskaloyannis, C., \emph{ Generalized deformed Virasoro algebras}, Modern
Phys. Lett. A \textbf{7} no. 9, (1992) 809--816.



\bibitem{Dixmier2} Dixmier J. \emph{Enveloping algebras}, Graduate studies in Math \textbf{11}, AMS, 1996.


\bibitem{KEF1} Ebrahimi-Fard K. \emph{Loday-type Algebras and the Rota-Baxter Relation}, Lett. Math. Phys. \textbf{61 }(2002) 139--147.

\bibitem{KEF-Guo1} Ebrahimi-Fard K. and Guo L. \emph{Free Rota-Baxter algebras and rooted trees}, J. Algebra App. \textbf{7} (2) (2008) 1--28.

\bibitem{KEF-Guo2} \bysame \emph{   Rota-Baxter algebras and dendriform algebras}, Jour. Pure Appl. Algebra 212 (2008) 320--339.

 \bibitem{KEF-Guo_2007} \bysame \emph{Rota-Baxter Algebras in Renormalization of Perturbative Quantum Field Theory},
Fields Institute Communications, 50, (2007) 47-105.

 \bibitem{KEF-Guo-Kreimer} Ebrahimi-Fard K., Guo L. and Kreimer D.\emph{ Spitzer's Identity and the Algebraic Birkhoff Decomposition in pQFT},
J. Phys. A: Math. Gen., 37, (2004) 11037--11052.

\bibitem{KEF-Guo-Manchon2006} Ebrahimi-Fard K., Guo L. and Manchon D. \emph{Birkhoff type decompositions and the Baker-Campbell-Hausdorff recursion},
Comm. in Math. Phys., 267, no.3, (2006) 821--845.





\bibitem{KEF-Manchon_JA2009} Ebrahimi-Fard K. and Manchon D. \emph{Dendriform equations}, J.  Algebra  \textbf{322}  (2009) 4053--4079.

\bibitem{KEF-Manchon2009}\bysame \emph{Twisted Dendriform algebras and the preLie Magnus expansion}, e-Print
 arXiv:0910.2166 (2009).

\bibitem{KEF-Bondia-Patras} Ebrahimi-Fard K., Gracia-Bondia J.M. and Patras F. \emph{Rota-Baxter algebras and new combinatorial identities}, Lett. Math. Phys. \textbf{81 }(2007) 61--75.

    \bibitem{KEF-Machon-Patras} Ebrahimi-Fard K., Manchon D. and Patras F. \emph{New  identities in Dendriform algebras}, J. Algebra \textbf{320 }(2008) 708--727.

 \bibitem{Mak-ElHamd:defoHom-alter}  Elhamdadi M. and Makhlouf A.,  \emph{Deformations of Hom-Alternative and Hom-Malcev algebras,}
     e-Print
  	arXiv:1006.2499v1  (2010).

\bibitem{FregierGohrSilv} Fregier Y.,  Gohr A. and Silvestrov S., \emph{Unital algebras of Hom-associative type and surjective or injective twistings}, J. Gen. Lie Theory Appl. Vol. \textbf{3} (4), (2009) 285--295.

\bibitem{Gohr} Gohr A. , \emph{On Hom-algebras with surjective
twisting}, J. Algebra \textbf{324} (2010) 1483--1491.

\bibitem{GozeRemm} Goze M. and Remm E.,
\emph{Lie-admissible algebras and operads}, J. Algebra \textbf{273} (2004)
 129--152.


\bibitem{Guo1} Guo L., \emph{Properties of free Baxter algebras}, Adv. Math. \textbf{151} (2000), 346--375.

\bibitem{Guo2} \bysame \emph{Baxter algebras and differential algebras}, e-Print
 arXiv:math  0407180 (2004).

\bibitem{Guo3} \bysame \emph{WHAT IS a Rota-Baxter algebra}, Notice of Amer. Math. Soc. 56 (2009) 1436--1437.

\bibitem{GuoK} Guo L. and Keigher W.,  \emph{Baxter algebras and shuffle products,} Adv. Math. \textbf{150 }(2000) 117--149.

\bibitem{Guo4} \bysame  \emph{On free Baxter algebras: completions and the internal construction}, Adv. Math. \textbf{151 }(2000) 101--127.


\bibitem{HLS} Hartwig J. T., Larsson D., Silvestrov S. D., \emph{ Deformations of Lie algebras using $\sigma$-derivations}, J. of
Algebra \textbf{295}, (2006)  314-361.

\bibitem{HS} Hellstr\"{o}m L., Silvestrov S. D., \emph{  Commuting elements in $q$-deformed Heisenberg algebras}, World Scientific (2000).

\bibitem{Hu}  Hu N., \emph{  $q$-Witt algebras,
$q$-Lie algebras, $q$-holomorph structure and
representations,}  Algebra Colloq. {\bf 6} ,
no. 1, (1999) 51--70.

 \bibitem{JinLi} Jin Q. and  Li X., \emph{Hom-Lie algebra structures on semi-simple Lie algebras},
J. Algebra, Volume \textbf{319}, Issue 4, (2008) 1398--1408


\bibitem{Kassel1} Kassel, C., \emph{Cyclic homology of differential operators,
the Virasoro algebra and a $q$-analogue}, Commun. Math. Phys. 146
(1992) 343--351.




\bibitem{LS1} Larsson D., Silvestrov S. D., \emph{  Quasi-Hom-Lie algebras, Central Extensions and 2-cocycle-like
identities,} J.  Algebra \textbf{288},  (2005) 321--344.

\bibitem{LS2} \bysame \emph{  Quasi-Lie algebras,} in {\it Noncommutative
Geometry and Representation Theory in Mathematical Physics,}
Contemp. Math., \textbf{391}, Amer. Math. Soc., Providence, RI,  (2005)  241--248.

\bibitem{LS3} \bysame \emph{  Quasi-deformations of $sl_2(\mathbb{F})$
using twisted derivations}, Comm. in Algebra \textbf{35}, (2007) 4303 -- 4318.


\bibitem{Li-Hou-Bai07} Li X., Hou D. and Bai C, \emph{Rota-Baxter operators on preLie algebras}, Journal of Nonlinear Mathematical Physics, \textbf{14} (2) (2007) 269--289.


\bibitem{LiuKeQin} Liu, Ke Qin,
\emph{Characterizations of the quantum Witt algebra}, Lett. Math. Phys.
\textbf{24} , no. 4, (1992)  257--265.


\bibitem{Loday0} Loday J.L., \emph{Cup-product for Leibniz cohomology and dual Leibniz algebras}, Math . Scand.  \textbf{77} (2) (1995) 189--196.

\bibitem{Loday1} \bysame \emph{Dialgebras}, in  Lecture Notes in Math., vol. \textbf{1763}, Springer, Berlin, 2001, 7--66.


\bibitem{Loday-Ronco04} Loday J.L. and Ronco M., \emph{Trialgebras and families of polytopes}, in Homotopy Theory: relations with Algebraic Geometry , Group Cohomology, and Algebraic K-Theory, in Contemp. Math. vol \textbf{346}, Amer. Math. Soc., Providence, RI, 2004, 369--673.



\bibitem{Mak:HomAlterna2010} Makhlouf A.,  \emph{Hom-alternative algebras and Hom-Jordan algebras}, International Electronic Journal of Algebra, Volume \textbf{8} (2010) 177--190.
\bibitem{Mak:Almeria}
\bysame
\emph{ Paradigm of Nonassociative Hom-algebras and Hom-superalgebras,} Proceedings of Jordan Structures in Algebra and Analysis Meeting, Eds:  J. Carmona Tapia, A. Morales Campoy, A. M. Peralta Pereira, M. I. Ramírez Álvarez, Publishing house: Circulo Rojo (2010), 145--177.


\bibitem{MS} Makhlouf A. and Silvestrov S. D.,
\emph{Hom-algebra structures}, J. Gen. Lie Theory Appl. \textbf{2}
(2) , (2008) 51--64.

\bibitem{HomHopf} \bysame
\emph{Hom-Lie admissible Hom-coalgebras and Hom-Hopf algebras},
 Published  as Chapter 17, pp
189-206, {\rm S. Silvestrov, E. Paal, V. Abramov, A. Stolin, (Eds.),
Generalized Lie theory in Mathematics, Physics and Beyond,
Springer-Verlag, Berlin, Heidelberg, (2008).}

\bibitem{HomDeform} \bysame
\emph{Notes on Formal deformations of Hom-Associative and Hom-Lie
algebras},  Forum Mathematicum, vol. \textbf{22} (4) (2010) 715--759.

\bibitem{HomAlgHomCoalg} \bysame
\emph{Hom-Algebras and Hom-Coalgebras},  J. of Algebra and its Applications, Vol. \textbf{9}, (2010).


\bibitem{Rota} Rota G.C., \emph{Baxter algebras and combinatorial identities}, I, II, Bull. Amer. Math. Soc. \textbf{75} (1969) 325-329;  Bull. Amer. Math. Soc. \textbf{75} (1969) 330-334.

\bibitem{Rota2}  \bysame \emph{Baxter operators, an introduction},  In Gian-Carlo Rota on combinatorics, Contemp. Mathematicians, pages 504--512. Birkh¬auser Boston, Boston, MA, 1995.

\bibitem{semenov} Semenov-Tian-Shansky M. A. ,\emph{ What is a classical r-matrix?}, Funct. Ana. Appl., \textbf{17}, no.4.,
(1983) 259--272.

\bibitem{Sheng} Sheng Y., \emph{Representations of hom-Lie algebras},  e-Print
 	arXiv:1005.0140v1 (2010).


\bibitem{Yau:EnvLieAlg} Yau D.,
\emph{Enveloping algebra of Hom-Lie algebras}, J. Gen. Lie Theory Appl.
\textbf{2} (2008) 95--108.

, arXiv:0712.3515v1 (2007).

\bibitem{Yau:homology}  \bysame
\emph{ Hom-algebras and homology,}  J. Lie Theory \textbf{19} (2009) 409--421.

\bibitem{Yau:YangBaxter} \bysame
\emph{The Hom-Yang-Baxter equation, Hom-Lie algebras,
 and quasi-triangular bialgebras,} J. Phys. A \textbf{42} (2009)  165--202.

\bibitem{Yau:comodule} \bysame
 \emph{Hom-bialgebras and comodule Hom-algebras}, Int. E. J. Alg. \textbf{8} (2010) 45--64.

\bibitem{Yau:YangBaxter2} \bysame
\emph{The Hom-Yang-Baxter equation and Hom-Lie algebras},  J. Math. Phys. \textbf{52} (2011) 053502.

\bibitem{Yau:novikov} \bysame
\emph{Hom-Novikov algebras}, J. Phys. A \textbf{44} (2011) 085202.

\bibitem{Yau:ClassicYangBaxter} \bysame
 \emph{The classical Hom-Yang-Baxter equation and Hom-Lie bialgebras},  e-Print

 arXiv:0905.1890v1, (2009).

 \bibitem{Yau:MalsevAlternJordan}\bysame
\emph{ Hom-Malsev, Hom-alternative, and Hom-Jordan algebras,} e-Print
  arXiv 1002.3944 (2010).
\end{thebibliography}
\providecommand{\bysame}{\leavevmode\hbox to3em{\hrulefill}\thinspace}
\providecommand{\MR}{\relax\ifhmode\unskip\space\fi MR }
\providecommand{\MRhref}[2]{%
  \href{http://www.ams.org/mathscinet-getitem?mr=#1}{#2}
}
\providecommand{\href}[2]{#2}

\end{document}